\pdfoutput=1 

\documentclass[reqno,11pt]{amsart}

\usepackage[margin=1in,letterpaper,portrait]{geometry}

\usepackage{microtype} 
\usepackage{
amssymb,
amsmath,
amsthm,
amsbsy,
graphicx,
mathtools,
float, 
tikz, 
}
\usepackage{ytableau}
\usepackage[vcentermath,enableskew]{youngtab}
\usepackage{enumerate}
\usepackage[utf8]{inputenc}
\usepackage[colorlinks=true, pdfstartview=FitV, 
linkcolor=violet, 
citecolor=blue,
filecolor=violet, urlcolor=violet]{hyperref}

\theoremstyle{plain} 
\newtheorem{theorem}{Theorem}[section]
\newtheorem{lemma}[theorem]{Lemma}
\newtheorem{proposition}[theorem]{Proposition}
\newtheorem{corollary}[theorem]{Corollary}

\theoremstyle{definition} 
\newtheorem{definition}[theorem]{Definition}
\newtheorem{example}[theorem]{Example}

\theoremstyle{remark} 
\newtheorem{remark}[theorem]{Remark}

\numberwithin{equation}{section}
\counterwithout{figure}{section}

\newcommand{\arxiv}[1]{\href{http://arxiv.org/abs/#1}{\texttt{arXiv:#1}}}

\renewcommand{\P}{\text{P}}
\DeclareMathOperator{\Q}{Q}
\newcommand{\rw}[1]{\left[{#1}\right]} 
\newcommand{\supp}{\textnormal{\textsf{supp}}} 
\newcommand{\Rowone}{{\sf{{Row}}}_1}
\newcommand{\Rowtwo}[1]
{{\sf{{Row}}}_2(\P(#1))}
\newcommand{\RowTwo}
{{\sf{Row}}_2}
\newcommand{\RowtwoQ}[1]
{{\sf{{Row}}}_2(\Q(#1))}
\newcommand{\balanced}{\textnormal{\textsf{uncrowded}}} 
\newcommand{\unbalanced}{\textnormal{\textsf{crowded}}} 
\newcommand{\crowded}{\textnormal{\textsf{crowded}}} 
\newcommand{\core}[1]{\hat{#1}}

\title{RSK tableaux and the weak order on fully commutative permutations}
\date{}

\author[Gunawan]{Emily Gunawan$^*$} 
\address{Department of Mathematics, The University of Oklahoma, Norman, OK, USA}
\email{egunawan@ou.edu}
\thanks{$^*$ Research partially supported by the Isaac Newton
Institute for Mathematical Sciences (funded by EPSRC Grant Number EP/R014604/1) during the programme Cluster algebras and representation theory.}

\author[Pan]{Jianping Pan} 
\address{Department of Mathematics, North Carolina State University, Raleigh, NC, U.S.A.}
\email{jpan9@ncsu.edu}

\author[Russell]{Heather M. Russell} 
\address{Department of Mathematics and Statistics, University of Richmond, Richmond, VA, USA}
\email{hrussell@richmond.edu}

\author[Tenner]{Bridget Eileen Tenner$^\dagger$}
\address{Department of Mathematical Sciences, DePaul University, Chicago, IL, USA}
\email{bridget@math.depaul.edu}
\thanks{$^\dagger$Research partially supported by NSF Grant DMS-2054436.}

\subjclass[2020]{Primary 05A05; Secondary 06A07}

\begin{document}
\begin{abstract}
For each fully commutative permutation, we construct a ``boolean core,'' which is the maximal boolean permutation in its principal order ideal under the right weak order. We partition the set of fully commutative permutations into the recently defined crowded and uncrowded elements, distinguished by whether or not their RSK insertion tableaux satisfy a sparsity condition. We show that a fully commutative element is uncrowded exactly when it shares the RSK insertion tableau with its boolean core.
We present the dynamics of the right weak order on fully commutative permutations, with particular interest in when they change from uncrowded to crowded. In particular, we use consecutive permutation patterns and descents to characterize the minimal crowded elements under the right weak order. 

\noindent \emph{Keywords:} boolean permutation, fully commutative permutation, Robinson--Schensted--Knuth correspondence, permutation pattern, reduced word, weak order
\end{abstract}
\maketitle

\section{Introduction}
First introduced in~\cite{stembridge96}, the fully commutative elements of a Coxeter group have the property that every pair of reduced words are related by a sequence of commutation relations. This set of objects is combinatorially rich and has been studied extensively (see, for example,~\cite{mpps, nadeau, stembridge98}). A permutation is fully commutative if and only if it avoids the pattern 321 \cite{billey-jockusch-stanley}, and the fully commutative permutations are exactly those with fewer than three rows in their Robinson--Schensted--Knuth (RSK) tableaux \cite{Sch61}. In this paper, following up on recent work in \cite{GPRT1}, we examine the interplay between reduced words and RSK tableaux for fully commutative permutations 
and analyze the set of fully commutative permutations under the weak order.

Our previous work, which is a companion to this paper, 
proves that the RSK insertion tableaux for boolean permutations satisfy a certain sparsity condition that we call {\it uncrowded} \cite{GPRT1}. Boolean permutations are an important subset of fully commutative permutations, characterized by the fact that their principal order ideals in the Bruhat order are isomorphic to boolean algebras. Motivated by those results, we call a fully commutative permutation with an uncrowded insertion tableau an \emph{uncrowded permutation}. In other words, an uncrowded fully commutative permutation shares its insertion tableau with some boolean element.  A fully commutative permutation that is not uncrowded is called \emph{crowded}. Central to this paper is the partition of the set of fully commutative permutations into crowded and uncrowded elements.

For each fully commutative element $w$, we identify a particular boolean element $\core{w}$ that is below $w$ in the weak order and has the same support as $w$; we call this $\hat{w}$ the \emph{boolean core} of $w$ (Theorem~\ref{thm:fully commutative decomposes via boolean}). 
We then view the fully commutative permutation $w$ as an ``elongation" of its boolean core, and we investigate the evolution of RSK insertion tableaux along chains of fully commutative elements in the right weak order. We prove that the second rows of insertion tableaux obey a containment property along covering relations in the right weak order (Theorem~\ref{thm:growing fully commutative means row 2 subsets}).

Applying this containment property, we show that if two fully commutative elements with the same support satisfy a covering relation in the right weak order and have different insertion tableaux then the larger one is necessarily crowded (Theorem~\ref{thm:adding length and box but not support to uncrowded makes crowded}). This has two important implications. First, a fully commutative element is uncrowded exactly when it has the same insertion tableau as its boolean core (Corollary~\ref{cor.uncrowded}).  
Second, within the set of fully commutative permutations under the right weak order, the uncrowded permutations form an order ideal and the crowded permutations form a dual order ideal (Lemma~\ref{lem:balanced order ideal NEW}). Thus, knowing the minimal crowded elements in the poset is, in fact, enough information to identify each fully commutative element as being either crowded or uncrowded. Our final result, Theorem~\ref{thm:minimal crowded iff NEW}, proves a set of necessary and sufficient conditions for a fully commutative permutation to be minimal in the dual order ideal of crowded permutations.

This paper is organized as follows. Section \ref{sec:background} provides necessary background information and notation including several results from our companion paper on boolean RSK tableaux. 
Section \ref{sec:fully comm and the weak order} defines the boolean core of a fully commutative element and proves a containment property for RSK tableaux under the right weak order. 
Section \ref{sec:adding to balanced} explores covering relations between fully commutative elements in the right weak order when the two permutations have the same support but different insertion tableaux. 
Finally, Section \ref{sec:minimal crowded permutations} characterizes the minimal elements of the dual order ideal of crowded fully commutative permutations in the right weak order, thus providing the key to classifying each fully commutative permutation as being either crowded or uncrowded.

\section{Background and notation}\label{sec:background}

Denote the symmetric group on $n$ elements by $S_n$. 
For a permutation $w\in S_n$, we use the \emph{one-line notation} $w = w(1)w(2)\cdots w(n)$ to represent $w$. For each $i \in \{1, \ldots, n-1\}$, we write $s_i \in S_n$ to denote the \emph{simple reflection} (or \emph{adjacent transposition}) 
that swaps $i$ and $i+1$ and fixes all other letters. Every permutation can be expressed as a product of simple reflections. Given $w\in S_n$, the minimum number of simple reflections among all such expressions for $w$ is called the \emph{(Coxeter) length} of $w$, and is denoted by $\ell(w)$. An \emph{inversion} in the one-line notation for $w$ is a pair of positions $i<j$ such that $w(i)>w(j)$. 
It is often convenient to recognize that $\ell(w)$ is the number of inversions in the one-line notation for $w$.
A \emph{reduced decomposition} of $w$ is an expression $w=s_{i_1} \cdots s_{i_{\ell(w)}}$ realizing the Coxeter length of $w$. To simplify notation, we refer to such a  decomposition via its \emph{reduced word} $\rw{i_1 \cdots i_{\ell(w)}}$. Let $R(w)$ denote the set of reduced words for $w$.

The set of letters appearing in reduced words of a permutation $w$ is the \emph{support} $\supp(w)$ of $w$. For example, consider  $w = 51342 = s_{4}s_{2}s_{3}s_{2}s_{4}s_{1} \in S_5.$ Because $w$ has six inversions, we see that $\ell(w)=6$ and $\rw{423241}\in R(w)$. 

The following technical lemma is related to the support of a permutation. It introduces a pair of values $M$ and $m$ which depend on the choice of $v\in S_n$ and $i\in \{1,\dots,n-1 \}$. These values play a central role in the arguments in Section~\ref{sec:adding to balanced}. 
\begin{lemma}\cite[Lemma~2.8]{Ten12}\label{lem:Ten12}
Fix a permutation $v\in S_n$ and $i\in \{1,\dots,n-1 \}$, and let $M := \max\{v(j) : j \le i\}$ 
 and $m := \min\{v(j) : j \ge i+1\}$.
Then the following statements are equivalent:
\begin{itemize}
    \item $i\in \supp(v)$,
    \item $\{v(1),\dots,v(i)\}\neq \{1,2,\dots,i\}$,
    \item $\{v(i+1),\dots,v(n)\}\neq \{i+1,i+2,\dots,n\}$,
    \item $M>i$,
    \item $m<i+1$,
    \item $M>m$.
\end{itemize}
\end{lemma}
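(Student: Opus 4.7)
My plan is to prove the six equivalences in a cycle (or hub-and-spoke pattern) rather than pairwise. The cheapest observations first: conditions (2) and (3) are exact complements inside $\{1,\ldots,n\}$, since $\{v(1),\ldots,v(i)\}$ and $\{v(i+1),\ldots,v(n)\}$ partition $\{1,\ldots,n\}$; the first equals $\{1,\ldots,i\}$ if and only if the second equals $\{i+1,\ldots,n\}$. So (2) $\Leftrightarrow$ (3) is immediate.

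Next I would handle (2) $\Leftrightarrow$ (4) and (3) $\Leftrightarrow$ (5) by a simple cardinality argument. The set $\{v(1),\ldots,v(i)\}$ consists of $i$ distinct elements of $\{1,\ldots,n\}$, so it equals $\{1,\ldots,i\}$ exactly when its maximum is at most $i$, i.e. $M\le i$. Negating gives (2) $\Leftrightarrow$ (4). The symmetric argument, applied to the minimum of the complementary set, yields (3) $\Leftrightarrow$ (5). For (6), note that if both $M\le i$ and $m\ge i+1$ then $M\le i<i+1\le m$, so $M>m$ forces one of (4), (5); but conversely (4) and (5) together give $M\ge i+1>i\ge m$, so $M>m$. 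Combined with the already-established equivalences (2)--(5), any one of (4), (5), (6) is equivalent to (2).

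The main nontrivial step is (1) $\Leftrightarrow$ (2). I would use the standard fact that, for the symmetric group viewed as a Coxeter group, the support of a permutation is a well-defined invariant of all its reduced words, and $i\notin \supp(v)$ is equivalent to $v$ lying in the parabolic subgroup generated by $\{s_1,\ldots,s_{i-1},s_{i+1},\ldots,s_{n-1}\}$. This subgroup is exactly $S_{\{1,\ldots,i\}}\times S_{\{i+1,\ldots,n\}}$, the stabilizer of the set $\{1,\ldots,i\}$ under the natural action. Hence $i\notin \supp(v)$ if and only if $v$ preserves $\{1,\ldots,i\}$ setwise, which is precisely the negation of (2).

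I expect the parabolic subgroup identification to be the main obstacle to cite cleanly. One direction (if $v\in\langle s_j:j\ne i\rangle$ then $v$ stabilizes $\{1,\ldots,i\}$) is trivial because each generator does. The converse (if $v$ stabilizes $\{1,\ldots,i\}$ then $v$ has a reduced word in the $s_j$ with $j\ne i$) can be shown either by invoking the standard parabolic-subgroup theorem for Coxeter groups, or directly by writing $v=v_1v_2$ with $v_1\in S_{\{1,\ldots,i\}}$ and $v_2\in S_{\{i+1,\ldots,n\}}$ and concatenating reduced words for the factors, whose letters are disjoint and all lie in $\{1,\ldots,n-1\}\setminus\{i\}$. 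Since distinct reduced words of $v$ share the same letter set, this completes (1) $\Leftrightarrow$ (2) and hence the full equivalence.
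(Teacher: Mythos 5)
The paper does not prove this lemma; it is imported verbatim from \cite[Lemma~2.8]{Ten12}, so there is no in-paper proof to compare your argument against. That said, your proof is correct and self-contained. The equivalences (2) $\Leftrightarrow$ (3), (2) $\Leftrightarrow$ (4), and (3) $\Leftrightarrow$ (5) are exactly the right elementary counting observations (an $i$-element subset of $\{1,\dots,n\}$ equals $\{1,\dots,i\}$ iff its max is at most $i$, and dually for the complement), and your treatment of (6) correctly uses those already-established equivalences so that ``$M>m$ implies (4) or (5)'' suffices. For (1) $\Leftrightarrow$ (2), the identification of $i\notin\supp(v)$ with membership in the standard parabolic subgroup $W_J$ for $J=\{1,\dots,n-1\}\setminus\{i\}$, together with the standard fact that $W_J\cong S_{\{1,\dots,i\}}\times S_{\{i+1,\dots,n\}}$ is precisely the setwise stabilizer of $\{1,\dots,i\}$, is the clean and standard route; the only facts you invoke (reduced words of an element share the same letter set, and $\ell_W$ restricted to $W_J$ equals $\ell_{W_J}$) are genuinely standard for Coxeter groups, so nothing is missing. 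One could alternatively give a purely elementary proof of (1) $\Leftrightarrow$ (4) by noting that $i\in\supp(v)$ iff some reduced word for $v$ contains $s_i$, iff $v$ has an inversion $(j,k)$ with $j\le i<k$, iff some $v(j)>i$ for $j\le i$, but your parabolic-subgroup argument is equally valid and arguably more conceptual.
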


The \emph{right weak order}, denoted by $\leq$, is a partial order
on $S_n$ obtained by taking the transitive closure of the cover relation $w < w s_i$ whenever $\ell(w) < \ell(w s_i)$. We use $w<w'$ to denote when $w\le w'$ and $w\neq w'$.
The \emph{left weak order} is defined analogously, with left multiplication by $s_i$ instead of right. 
In each order, the minimum element is the identity permutation and the maximum element is the \emph{long element} $n(n-1)\cdots 21$.
More details on the weak order can be found in, for example,~\cite[Section~3.1]{BB05}.

An \emph{order ideal} of a poset 
is a subset $C$ such that if $y \in C$ and $x \leq y$, then $x \in C$. A \emph{dual order ideal} (or order filter, or upper order ideal) of a poset is a subset $C$ such that if $x \in C$ and $x \leq y$, then $y \in C$. 

\subsection{Fully commutative permutations and boolean permutations}

Let $m\leq n$. The permutation $w\in S_n$ is said to \emph{contain the pattern} $\sigma\in S_m$ if $w$ has a (not necessarily contiguous) subsequence
whose elements are 
in the same relative order as $\sigma$.  In the case that $w$ does not contain $\sigma$, we say $w$ \emph{avoids} $\sigma$. For instance, the permutation 
$w$ = 314592687 contains the pattern 1423 because the subsequence 1927 (among others) has the same relative order as 1423. On the other hand, $w$ avoids 3241 since it has no subsequences that follow the pattern 3241. We note also that the inversions of a permutation are exactly the instances of 21-patterns.

For $|i-j| > 1$, simple reflections satisfy \emph{commutation relations} of the form $s_i s_j=s_j s_i$. An application of a commutation relation to a product of simple reflections is called a \emph{commutation move}. 
In the context of reduced words, 
we will say adjacent letters $i$ and $j$ in a reduced word \emph{commute} when $|i - j| > 1$. For a reduced word $\rw{u}$ of a permutation, the equivalence class of all words obtained from $\rw{u}$ by sequences of commutation moves is called the \emph{commutation class} of $\rw{u}$.
A permutation is called \emph{fully commutative} if all of its reduced words form a single commutation class. As the following proposition shows, fully commutative permutations can be characterized in terms of pattern avoidance.

\begin{proposition}[\cite{billey-jockusch-stanley}]
\label{prop:tfae fully commutative}
Let $w$ be a permutation.
The following are equivalent:
\begin{itemize}
\item $w$ is fully commutative,
\item $w$ avoids the pattern 321,
\item no reduced word of $w$ contains $i(i+1)i$ as a factor, for any $i$, and 
\item no reduced word of $w$ contains $(i+1)i(i+1)$ as a factor, for any $i$.
\end{itemize}
\end{proposition}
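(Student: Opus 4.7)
My approach uses Matsumoto's theorem as the backbone: any two reduced words for the same permutation in $S_n$ are connected by a sequence of commutation moves $s_is_j\leftrightarrow s_js_i$ (for $|i-j|>1$) and braid moves $s_is_{i+1}s_i\leftrightarrow s_{i+1}s_is_{i+1}$, and each intermediate word is itself a reduced word for the same permutation. I will establish the equivalences in three chunks: $\text{(iii)}\Leftrightarrow\text{(iv)}$, then $\text{(i)}\Leftrightarrow\text{(iii)}$, and finally $\text{(i)}\Leftrightarrow\text{(ii)}$.

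The equivalence $\text{(iii)}\Leftrightarrow\text{(iv)}$ is immediate, since a single braid move converts a reduced word of $w$ containing the factor $i(i+1)i$ into a reduced word of $w$ containing $(i+1)i(i+1)$, and vice versa.

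For $\text{(i)}\Leftrightarrow\text{(iii)}$, observe that the three-letter reduced decompositions $s_is_{i+1}s_i$ and $s_{i+1}s_is_{i+1}$ are not commutation-equivalent because they use different multisets of letters. Thus, if some reduced word of $w$ contained $i(i+1)i$ as a factor, applying a single braid move there would produce a reduced word of $w$ in a strictly different commutation class, so $w$ could not be fully commutative; this proves $\text{(i)}\Rightarrow\text{(iii)}$. Conversely, suppose $w$ is not fully commutative, and fix reduced words $\rw{u}$ and $\rw{u'}$ in different commutation classes with a connecting Matsumoto sequence. Since commutation moves by definition preserve commutation classes, at least one move in this sequence must be a braid move, and the reduced word of $w$ to which that move is applied contains $i(i+1)i$ as a factor; this gives $\text{(iii)}\Rightarrow\text{(i)}$.

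The remaining equivalence $\text{(i)}\Leftrightarrow\text{(ii)}$ is the classical pattern-avoidance theorem of Billey, Jockusch, and Stanley~\cite{billey-jockusch-stanley}, and I anticipate this as the main obstacle. For the contrapositive of $\text{(ii)}\Rightarrow\text{(iii)}$, given a 321 pattern at positions $a<b<c$ in $w$, the plan is to construct a reduced word of $w$ whose inner braid factor sorts these three positions, for instance by first handling all inversions not involving $\{a,b,c\}$ and then resolving the three nested inversions among $a,b,c$ consecutively, yielding a factor $i(i+1)i$. For the converse, given a reduced word of $w$ with factor $i(i+1)i$, let $v$ denote the partial product through that factor; then $v$ sends some three positions to strictly decreasing values, and because the remainder of the reduced word is itself reduced and cannot simultaneously unwind all three of the resulting inversions, those three values remain in decreasing relative order in $w$, producing a 321 pattern. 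The delicate point is the position bookkeeping that pins down which three positions survive as a 321 pattern; I would follow the classical arguments in~\cite{billey-jockusch-stanley} here rather than reconstruct them.
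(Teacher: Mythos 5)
The paper states this proposition with a citation to Billey--Jockusch--Stanley and gives no proof of its own, so there is no in-paper argument to compare against; I will assess your sketch on its merits. Your treatment of $\textrm{(iii)}\Leftrightarrow\textrm{(iv)}$ and $\textrm{(i)}\Leftrightarrow\textrm{(iii)}$ is essentially correct: a single braid move carries a factor $i(i+1)i$ to $(i+1)i(i+1)$ inside a reduced word and vice versa, and Matsumoto/Tits together with the fact that commutation moves preserve commutation classes gives the second equivalence. One small imprecision in your $\textrm{(iii)}\Rightarrow\textrm{(i)}$ direction: the word to which the braid move is applied might contain $(i+1)i(i+1)$ rather than $i(i+1)i$; you then need to apply that braid move once to produce a reduced word with the factor $i(i+1)i$, after which (iii) fails. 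Your argument for $\textrm{(iii)}\Rightarrow\textrm{(ii)}$ is a clean elementary route: the prefix product ending at a factor $i(i+1)i$ has a 321 pattern (that factor reverses three increasing entries in consecutive positions), and 321-avoidance is downward-closed in the right weak order because the set of value-pair inversions grows by exactly one along each cover $v\lessdot vs_j$, so every inversion --- and hence every triple of pairwise inversions --- of a prefix persists into $w$; this is stronger than your ``cannot simultaneously unwind all three'' and is worth stating explicitly. The genuinely hard direction is $\textrm{(ii)}\Rightarrow\textrm{(iii)}$: your sketch of first resolving inversions disjoint from $\{a,b,c\}$ and then sorting the three offending positions does not obviously produce a reduced word, and the whole difficulty is showing one can arrange the wiring diagram so that the three pairwise crossings among the 321-pattern wires occur consecutively. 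You rightly defer that step to~\cite{billey-jockusch-stanley}, as the paper itself does.
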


Boolean permutations are an important subset of the set of fully commutative permutations. 
The following result gives a description of boolean permutations analogous to that of Proposition~\ref{prop:tfae fully commutative}.

\begin{proposition}[\cite{tenner-patt-bru}]\label{prop:tfae boolean}
Let $w$ be a permutation.
The following are equivalent:
\begin{itemize}
\item $w$ is boolean,
\item $w$ avoids the pattern 321 and 3412,
\item there is a reduced word of $w$ that consists of all distinct letters, and
\item every reduced word of $w$ consists of all distinct letters.
\end{itemize}
\end{proposition}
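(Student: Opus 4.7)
The plan is to prove the four conditions equivalent in a cycle, using condition~(4) as the central hub connecting the pattern condition~(2) to the lattice condition~(1). By Matsumoto's theorem, any two reduced words of $w$ are connected by a sequence of commutation and braid moves, and since each such move preserves the set of letters appearing, $\supp(w)$ is well-defined and every element of $\supp(w)$ appears in every reduced word of $w$; in particular $\ell(w)\ge|\supp(w)|$.

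For (3)$\Leftrightarrow$(4), a reduced word of $w$ has all distinct letters if and only if it has exactly $|\supp(w)|$ letters, which happens if and only if $\ell(w)=|\supp(w)|$. Since this numerical equality depends only on $w$, distinctness of letters in one reduced word forces it in every reduced word.

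For (2)$\Leftrightarrow$(4), the implication (4)$\Rightarrow$(2) translates pattern containment into repeated letters: if $w$ contained $321$ then Proposition~\ref{prop:tfae fully commutative} would yield a reduced word with a repeated letter, while if $w$ contained $3412$ at positions $a<b<c<d$ I would write down a reduced word directly in which the simple reflection indexed by the split between the third and fourth values of the pattern appears twice. For the harder direction (2)$\Rightarrow$(4), I would assume $w$ is $321$-avoiding (hence fully commutative) and $3412$-avoiding, and argue by contradiction: suppose some reduced word $\rw{u}$ of $w$ has a repeated letter $i$. Choosing two occurrences of $i$ that minimise the gap between them, I would use commutation to push out every letter between them that commutes with $i$, so the surviving letters lie in $\{i-1,i+1\}$; Proposition~\ref{prop:tfae fully commutative} forbids the factors $i(i+1)i$ and $(i+1)i(i+1)$, which forces a very restricted block between the two $i$'s whose action on positions $i$ and $i+1$ can be unwound to recover four entries of $w$ forming a $3412$ pattern. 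Converting this block into an explicit $3412$ pattern is the main obstacle, and I expect to handle it by tracking how the intervening simple reflections act on the two positions where the values exchanged by $s_i$ sit.

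For (1)$\Leftrightarrow$(4), I would use the subword property of the Bruhat order: any element below $w$ in Bruhat order corresponds to a reduced subword of a fixed reduced word $\rw{u}$ of $w$. If the letters of $\rw{u}$ are all distinct, then every subword of $\rw{u}$ is automatically reduced (otherwise substituting a shorter expression for the subword back into $\rw{u}$ would produce an expression for $w$ of length less than $\ell(w)$), and different subwords yield different permutations (different position sets correspond to different letter sets, hence to elements with different supports). This realises the principal order ideal below $w$ as the Boolean lattice on $\supp(w)$. Conversely, if that interval is Boolean of size $2^{\ell(w)}$, the surjection from subwords of $\rw{u}$ onto it must be a bijection, and a counting argument forces every subword to be reduced and to give a distinct permutation, which in turn forces the letters of $\rw{u}$ to be distinct.
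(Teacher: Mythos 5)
The paper does not prove this proposition; it cites it directly from \cite{tenner-patt-bru}, so there is no in-paper argument to compare against. Evaluating your plan on its own merits: the equivalence (3)$\Leftrightarrow$(4) via $\ell(w)=|\supp(w)|$ is correct, and your (1)$\Leftrightarrow$(4) argument via the subword property is sound (though it is cleaner to observe that any word in distinct generators is automatically reduced, by induction on length using the fact that $\supp(s_{i_1}\cdots s_{i_{k-1}})=\{i_1,\ldots,i_{k-1}\}$ cannot contain a fresh letter $i_k$, hence $i_k$ is not a right descent).

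The two directions of (2)$\Leftrightarrow$(4) are where you have genuine work left. For (2)$\Rightarrow$(4), your reduction to a block $s_i s_{i\pm1} s_{i\mp1} s_i$ between a closest pair of repeated $i$'s is correct and matches Lemma~\ref{lem:repeated letters in fully commutative} (minimality of the gap forces exactly one $i-1$ and one $i+1$ after commuting everything else out). The step you flag as the main obstacle can be closed as follows: write $w=amb$ with $m=s_is_{i-1}s_{i+1}s_i$ and $\ell$ additive. Set $u=am$. Because $m\le_L u$, the four position-inversions of $m$ persist in $u$, so the values $p=u(i-1),\,q=u(i),\,r=u(i+1),\,s=u(i+2)$ satisfy $p,q>r,s$; since $u\le_R w$ and $321$-avoidance is inherited downward under $\le_R$ (a $321$ in $v$ exhibits three mutually nested value-inversions, all of which must appear in any $w\ge_R v$), $u$ avoids $321$, forcing $p<q$ and $r<s$, i.e.\ $r<s<p<q$. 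Passing from $u$ to $w$ adds value-inversions, so $(r,p),(r,q),(s,p),(s,q)$ remain value-inversions of $w$; if $(p,q)$ or $(r,s)$ were value-inversions of $w$, a $321$ would appear. Hence in $w$ the values $p,q,r,s$ occur in the order $p,q,r,s$, which is a $3412$. For (4)$\Rightarrow$(2) the $321$ case is immediate, but your treatment of the $3412$ case (``write down a reduced word directly in which the simple reflection indexed by the split \dots appears twice'') is not yet an argument: for general permutations the multiplicity of $s_j$ in a reduced word is not invariant, and even in the fully commutative case you need to say why the crossings forced by the $3412$-pattern cannot all be absorbed at distinct levels. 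One way to repair it is to reuse the machinery already in place: assume a distinct-letter reduced word and $321$-avoidance, note the heap then has all distinct labels with Hasse edges only between consecutive labels (Lemma~\ref{lem:label cover}), so it is a disjoint union of fences, and then verify directly from the fence description of the one-line notation that no $3412$ occurs. As written, this is the one place in the proposal with a real gap.
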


\subsection{Heaps and commutation class}
\label{section:heaps and commutation class}
\label{sec:heap of boolean permutation}

In this section, we review the classical theory of heaps, which was used in~\cite{Ste96} to study fully commutative elements of a Coxeter group. 
For a detailed list of attributions on the theory of heaps, see~\cite[Solutions to Exercise 3.123(ab)]{Sta12}.

Given a reduced word $\rw{u}$ of a permutation, we can associate to $\rw{u}$ a \emph{heap}, a poset whose elements are labeled by the simple reflections in $\rw{u}$. 
A \emph{heap diagram} is the Hasse diagram for a heap in which poset elements are replaced by their labels.

\begin{definition}
\label{defn:heap of a reduced word}
Given an arbitrary reduced word $\rw{u}=\rw{u_1 \cdots u_\ell}$ of a permutation, 
consider the
partial order $\preccurlyeq$ on the set $\{ 1, \dots, \ell\}$ obtained via the transitive closure of the relations 
\[x
\prec
y\]
for $x < y$ such that
$|u_x-u_y| \leq 1$. 
For each $1 \leq x \leq \ell$, the \emph{label} of the poset element $x$ is  $u_x$. This labeled poset is called the \emph{heap} for $\rw{u}$. The Hasse diagram for this poset with elements $\{1,\ldots, \ell\}$ replaced by their labels is called the \emph{heap diagram} for $\rw{u}$.
\end{definition}

The following lemma follows directly from this definition.

\begin{lemma}\label{lem:label cover}
Let $\rw{u}$ be an arbitrary reduced word for a permutation, and let $x<y$ be elements of the heap for $\rw{u}$.  If $y$ covers $x$, then the labels of $x$ and $y$ differ by exactly one.
\end{lemma}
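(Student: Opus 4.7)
The plan is to unpack the definition of ``cover'' together with the definition of the heap order, and to argue in two separate steps that (i) the labels of $x$ and $y$ cannot differ by more than one, and (ii) the labels cannot be equal.

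For step (i), I would use the fact that $\preccurlyeq$ is the transitive closure of the relations $a\prec b$ (for $a<b$ with $|u_a-u_b|\le 1$). Since $y$ covers $x$, the relation $x\prec y$ cannot be obtained from a chain $x\prec z\prec y$ of length at least two, because such a $z$ would violate the cover condition. Hence $x\prec y$ must itself be one of the defining ``base'' relations of $\preccurlyeq$, giving $|u_x-u_y|\le 1$ immediately.

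For step (ii), I would argue by contradiction: suppose $u_x=u_y=i$. Because $\rw{u}$ is reduced, the factor of $\rw{u}$ between positions $x$ and $y$ cannot consist entirely of letters that commute with $s_i$; otherwise, repeated commutation moves would transform $\rw{u}$ into a word containing $\cdots s_i s_i \cdots$, producing a shorter expression via $s_i^2=e$ and contradicting the fact that $\rw{u}$ is reduced. Consequently there exists a position $z$ with $x<z<y$ and $u_z\in\{i-1,i,i+1\}$. Then $|u_x-u_z|\le 1$ and $|u_z-u_y|\le 1$, so $x\prec z\prec y$ in the heap, contradicting that $y$ covers $x$. Hence $u_x\ne u_y$, and combined with step (i) this gives $|u_x-u_y|=1$.

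The main subtlety is step (ii), which is the only place where the reducedness of $\rw{u}$ (rather than just the formal definition of the heap) is used; care is required in stating precisely why a block of pairwise-commuting letters between two occurrences of $s_i$ forces a non-reduced expression. Step (i) is essentially definitional, relying on the fact that the Hasse diagram of a transitive closure only retains relations that are not themselves composites.
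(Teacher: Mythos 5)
Your proof is correct. The paper does not actually spell out a proof of this lemma; it merely asserts that the statement ``follows directly from this definition'' (Definition~2.4), so there is no paper argument to compare against step by step. Your two-step decomposition is the right way to fill in the details. Step~(i) is indeed purely order-theoretic: a covering relation in the transitive closure of a set of base relations must itself be a base relation, which immediately gives $|u_x-u_y|\le 1$. Step~(ii) is the genuinely nontrivial part, and you correctly identify it as such: since the base relations in Definition~2.4 include the case $u_x=u_y$, the definition alone does not preclude an equal-labeled cover, and one must invoke reducedness of $\rw{u}$. Your commutation argument (all intermediate letters commute with $s_i$, so the two copies of $s_i$ could be brought adjacent and cancelled) is exactly the standard reason this cannot happen in a reduced word, and the conclusion that some intermediate position has label in $\{i-1,i,i+1\}$ then produces the forbidden chain $x\prec z\prec y$. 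In short, you have given a complete proof of a fact the paper took as immediate, and in doing so you have usefully highlighted that the claim depends not only on the definition of the heap order but also on the hypothesis that the word is reduced.
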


Note that a heap is, in some sense, a partial ordering on the multiset of simple reflections occurring in a reduced word. For a fully commutative permutation, the heap structure on this multiset is, in fact, independent of the choice of reduced word (see Proposition~\ref{prop:set of labeled linear extensions is commutativity class}). Throughout this paper, for a fully commutative permutation $w$, we will use $H_w$ to denote both the heap diagram for $w$ and the poset of simple reflections of any reduced word $\rw{u}$ of $w$. The context should make it clear to which object $H_w$ refers.

From a linear extension of the heap, one can define a \emph{labeled linear extension} essentially by replacing elements of the heap with their labels.

\begin{definition}\label{def:Labeled linear extensions}
A \emph{labeled linear extension} of the heap of a reduced word $\rw{u}=\rw{u_1 \cdots u_\ell}$
is a word $\rw{u_{\pi(1)} \cdots u_{\pi(\ell)}}$, 
where 
 $\pi = \pi(1) \cdots \pi(\ell)$ 
 is a total order
 on $\{1,\ldots, \ell\}$ that is consistent with the structure of the heap. That is, $\pi(x) \prec \pi(y)$ implies $x<y$. 
\end{definition}

As the next proposition illustrates, labeled linear extensions are related to reduced words and commutation classes.

\begin{proposition}
[{\cite[Proof of Proposition~2.2]{Ste96} and~\cite[Solutions to Exercise 3.123(ab)]{Sta12}}]
\label{prop:set of labeled linear extensions is commutativity class}
Given a reduced word $\rw{u}$,  the set of labeled linear extensions of the heap for $\rw{u}$ is the commutation class of $\rw{u}$.
\end{proposition}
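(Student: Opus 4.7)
The plan is to prove the set equality by showing both inclusions and leveraging the standard fact that any two linear extensions of a finite poset are connected by a sequence of adjacent transpositions swapping incomparable elements. The crucial preliminary observation, which follows immediately from Definition~\ref{defn:heap of a reduced word} and the transitive closure it specifies, is the following dichotomy: two heap elements $x < y$ satisfy $x \prec y$ whenever $|u_x - u_y| \leq 1$, so contrapositively, if $x$ and $y$ are incomparable in the heap, then $|u_x - u_y| > 1$. Conversely, adjacent positions $x$ and $x+1$ with $|u_x - u_{x+1}| > 1$ are incomparable in the heap, since the defining relation does not impose $x \prec x+1$, and no chain of intermediate positions can contribute a relation (there are none between $x$ and $x+1$).

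For the inclusion that every word in the commutation class of $[u]$ arises as a labeled linear extension, I would induct on the number of commutation moves applied to $[u]$. The base case is trivial: $[u]$ itself corresponds to the identity permutation $\pi = \mathrm{id}$, which is automatically a linear extension. For the inductive step, suppose $[u_{\pi(1)} \cdots u_{\pi(\ell)}]$ is a labeled linear extension, and a commutation move is applied at positions $k, k+1$, swapping labels $u_{\pi(k)}$ and $u_{\pi(k+1)}$ with $|u_{\pi(k)} - u_{\pi(k+1)}|>1$. By the preliminary observation, $\pi(k)$ and $\pi(k+1)$ are incomparable in the heap, so the new permutation $\pi' = \pi \circ (k,k+1)$ still respects $\preccurlyeq$; that is, the resulting word is again a labeled linear extension.

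For the reverse inclusion, let $\pi$ be an arbitrary linear extension of the heap and consider the corresponding word $[u_{\pi(1)} \cdots u_{\pi(\ell)}]$. By the classical fact that the graph on linear extensions of a finite poset (with edges given by adjacent transpositions of incomparable consecutive elements) is connected, one can pass from $\mathrm{id}$ to $\pi$ via a sequence of such transpositions. Each transposition swaps some $\pi(k)$ and $\pi(k+1)$ that are incomparable in $H_{[u]}$; by the preliminary observation, their labels satisfy $|u_{\pi(k)} - u_{\pi(k+1)}| > 1$, so the swap is precisely a commutation move on the corresponding word. Tracking the sequence of words produced along the way yields a chain of commutation moves from $[u]$ to $[u_{\pi(1)} \cdots u_{\pi(\ell)}]$, placing the latter in the commutation class of $[u]$.

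The arguments are essentially routine poset-theoretic bookkeeping; I expect the only subtle point to be ensuring that after each swap the intermediate word is still reduced (and hence the heap structure is unambiguous). This is automatic in the commutation-class direction, since commutation moves preserve reducedness; for the other direction the appeal to the connectivity of the linear-extension graph can be invoked abstractly at the level of the poset without referring to the word at all, and only at the end does one translate each elementary swap into a commutation move on the word.
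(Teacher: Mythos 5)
The paper gives no proof of this proposition; it cites Stembridge and Stanley. Your structure — one inclusion by induction on commutation moves, the other via connectivity of the linear-extension graph, both glued together by the equivalence between ``incomparable in the heap'' and ``labels commute'' for consecutive elements — is exactly the standard argument, and the reverse inclusion is fine as written.

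There is, however, a genuine gap in the forward inclusion (commutation class $\subseteq$ labeled linear extensions). In the inductive step you invoke the converse part of your preliminary observation to conclude that $\pi(k)$ and $\pi(k+1)$ are incomparable from $|u_{\pi(k)} - u_{\pi(k+1)}| > 1$. But that converse is only proved for \emph{adjacent positions $x$ and $x+1$ in $[u]$}, where the argument ``no chain of intermediate positions'' is trivial. Here $\pi(k)$ and $\pi(k+1)$ are consecutive in the linear extension $\pi$, not necessarily consecutive as integers, so a chain of intermediate heap elements could in principle force a comparability even though the labels commute. The observation as stated does not rule this out, so the inductive step does not follow.

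The fix is to replace the misapplied converse with a covering-relation argument. Since $\pi$ is a linear extension and $\pi(k),\pi(k+1)$ occupy consecutive slots, if they were comparable (necessarily $\pi(k)\prec\pi(k+1)$) then no $z$ could satisfy $\pi(k)\prec z\prec\pi(k+1)$, because $\pi^{-1}(z)$ would have to lie strictly between $k$ and $k+1$. Hence $\pi(k+1)$ would cover $\pi(k)$, and Lemma~\ref{lem:label cover} would force $|u_{\pi(k)}-u_{\pi(k+1)}|=1$, contradicting $|u_{\pi(k)}-u_{\pi(k+1)}|>1$. With this substitution the induction closes and the proof is complete. (Your worry at the end about preserving reducedness is a non-issue: commutation moves preserve both the group element and the word length, so reducedness is automatic.)
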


By definition, a fully commutative permutation has exactly one commutation class. Hence Proposition~\ref{prop:set of labeled linear extensions is commutativity class} implies that given any reduced word $\rw{u}$ for a fully commutative permutation $w$, the set of labeled linear extensions of the heap for $\rw{u}$ is exactly $R(w)$, the set of reduced words of $w$.

\begin{example}
\label{ex:heap345619278}
The heap diagram $H_w$ of the fully commutative permutation $w=345619278 \in S_9$
is depicted in Figure~\ref{fig:heap345619278}. 
Two of the labeled linear extensions correspond to the reduced words $\rw{87234561234}$ and $\rw{23451234876}$.
\end{example}

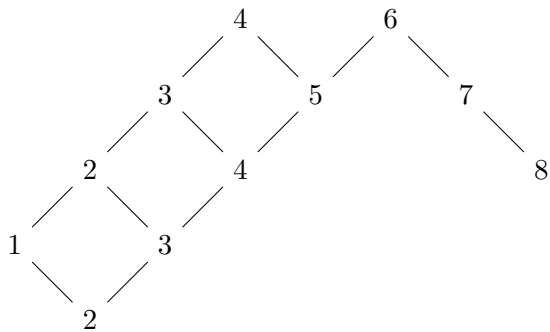
\begin{figure}[htb]
\begin{tikzpicture}[xscale=1,yscale=1,>=latex]
\def\posetedgecolor{blue}
\def\posetedgenegativeslope{red}
\node(1) at (1,1) {$1$}; 
\node(2) at (2,0) {$2$}; 
\node(2-1) at (2,2) {$2$}; 
\node(3) at (3,1) {$3$}; 
\node(3-1) at (3,3) {$3$}; 
\node(4) at (4,2) {$4$};  
\node(4-1) at (4,4) {$4$};  
\node(5) at (5,3) {$5$};  
\node(6) at (6,4) {$6$}; 
\node(7) at (7,3) {$7$}; 
\node(8) at (8,2) {$8$}; 

\draw[-] (1) -- (2); 
\draw[-] (2) -- (3); 
\draw[-] (3) -- (4); 
\draw[-] (4) -- (5); 
\draw[-] (5) -- (6); 
\draw[-] (6) -- (7);
\draw[-] (7) -- (8);

\draw[-] (1) -- (2-1) -- (3-1) -- (4-1);

\draw[-] (3) -- (2-1);
\draw[-] (4) -- (3-1);
\draw[-] (5) -- (4-1);
\end{tikzpicture}
\caption{The heap diagram for the fully commutative permutation $345619278 \in S_9$.}
\label{fig:heap345619278}
\end{figure}

Proposition~\ref{prop:tfae boolean} states that a boolean permutation is a fully commutative permutation with no repeated letters in any of its reduced words. In the sense of heaps, this means that there are no two elements corresponding to the same simple reflection. 
For boolean-specific descriptions of heaps, see \cite[Section~2.2]{GPRT1}.

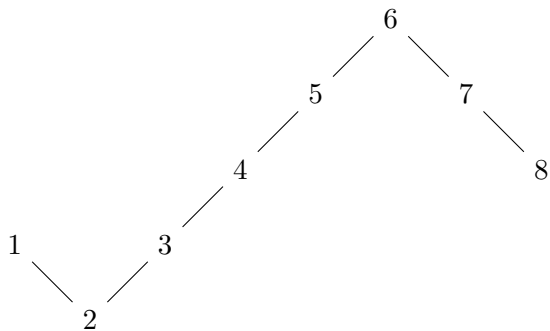
\begin{figure}[htb]
\begin{tikzpicture}[xscale=1,yscale=1,>=latex]
\node(1) at (1,1) {$1$}; 
\node(2) at (2,0) {$2$}; 
\node(3) at (3,1) {$3$}; 
\node(4) at (4,2) {$4$};  
\node(5) at (5,3) {$5$};  
\node(6) at (6,4) {$6$}; 
\node(7) at (7,3) {$7$}; 
\node(8) at (8,2) {$8$}; 

\draw[-] (1) -- (2); 
\draw[-] (2) -- (3); 
\draw[-] (3) -- (4); 
\draw[-] (4) -- (5); 
\draw[-] (5) -- (6); 
\draw[-] (6) -- (7);
\draw[-] (7) -- (8);
\end{tikzpicture}
\caption{The heap diagram for the boolean permutation 
$314569278 \in S_9$. 
}\label{fig:fence_reduced_word21873456}
\end{figure}

 \subsection{Robinson--Schensted--Knuth tableaux} 
The well-known Robinson--Schensted--Knuth (RSK) insertion algorithm, 
as described in~\cite{Sch61}, 
is 
a bijection 
\[{w \mapsto (\P(w),\Q(w))}\]
from $S_n$ onto pairs of standard tableaux of size $n$ having identical shape. 
The tableau $\P(w)$ is  called the \emph{insertion tableau} of $w$, and the tableau $\Q(w)$ is the \emph{recording tableau} of $w$. 
 The shape of these tableaux is the \emph{RSK partition of $w$}.
We will also write $\P_i(w)$ to denote the partial insertion tableau constructed by the first $i$ letters in the one-line notation for $w$. For more details, see for example~\cite[Section~7.11]{sta99}. 

The following symmetry result is an important feature of the algorithm, and one that will simplify our own work.

\begin{proposition}[\cite{Scu63}]\label{prop:P w inverse is Q w}
For any permutation $w$, 
$$\P(w^{-1})=\Q(w).$$
\end{proposition}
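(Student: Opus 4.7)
The plan is to exploit the biword formulation of RSK. Represent $w$ as the two-line array
\[
\begin{pmatrix} 1 & 2 & \cdots & n \\ w(1) & w(2) & \cdots & w(n) \end{pmatrix},
\]
so that RSK-inserting the bottom row while recording with the top row produces $(\P(w), \Q(w))$. The two-line array for $w^{-1}$ is obtained by swapping the two rows and then re-sorting the columns so that the new top row is increasing. Therefore the identity $\P(w^{-1}) = \Q(w)$ would follow from the general symmetry statement: if a biword $\pi$ produces the pair $(\P, \Q)$ under RSK, then the biword obtained by swapping its two rows and re-sorting produces $(\Q, \P)$.

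I would prove this symmetry using Viennot's geometric construction of RSK. Represent $w$ as the set of lattice points $\{(i, w(i)) : 1 \le i \le n\}$ in the plane. The shadow lines cast from the southwest determine the rows of $\P(w)$ in an explicit geometric way, and iterating the construction on successive shadow skeletons produces all of $\P(w)$; by an analogous construction reading horizontal ``time'' coordinates instead of vertical ``value'' coordinates, one obtains $\Q(w)$. Because reflecting the point configuration across the line $y = x$ sends the diagram of $w$ to the diagram of $w^{-1}$ and swaps the two coordinate axes, it exchanges the two constructions, so reflecting the picture turns $(\P(w), \Q(w))$ into $(\Q(w), \P(w))$.

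An alternative route is via Fomin's growth diagrams, in which each cell of an $n \times n$ grid is labeled by a partition according to local rules determined by the permutation matrix of $w$. The partitions along the top edge recover $\Q(w)$ and those along the right edge recover $\P(w)$. Transposing the grid across its main diagonal corresponds precisely to inverting the permutation, and this transposition interchanges the top and right edges, yielding the desired swap of $\P$ and $\Q$.

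The main obstacle is self-containedness: either route requires setting up an auxiliary geometric or diagrammatic machine and verifying that it reproduces standard RSK insertion, which is a nontrivial unpacking. Once this setup is in place, however, the diagonal reflection that realizes $w \mapsto w^{-1}$ makes the symmetry essentially immediate. A more elementary but notationally heavier alternative would be induction on $n$, inserting $w(n)$ into $\P_{n-1}(w)$ and tracking the reverse bumping path that appears as the corresponding cell in $\Q(w^{-1})$; this works but obscures the structural reason for the symmetry.
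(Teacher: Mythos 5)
The paper does not prove this statement: it is a classical theorem of Sch\"utzenberger and is simply cited to \cite{Scu63}, so there is no in-paper argument to compare against. Your sketch outlines three standard routes to the result, and all three are viable. The Viennot shadow-line argument and the Fomin growth-diagram argument are both correct in outline and share the same structural point you identify: both encode the RSK correspondence in a picture drawn on the permutation diagram $\{(i, w(i))\}$, and the reflection across the main diagonal that realizes $w \mapsto w^{-1}$ visibly swaps the two coordinate axes, hence swaps the roles of $\P$ and $\Q$. As you note, the real work in either case is the setup, namely verifying that the geometric or growth-diagram machine reproduces row-insertion; the symmetry itself is then essentially free. One caution on the inductive alternative you mention at the end: ``inserting $w(n)$ into $\P_{n-1}(w)$ and tracking the reverse bumping path'' is closer in spirit to Sch\"utzenberger's original argument, but the induction is delicate because removing $w(n)$ from $w$ and removing the value $n$ from the domain of $w^{-1}$ are not the same reduction, so some care is needed to set up the inductive hypothesis correctly (one usually inducts on the largest value or uses the biword/multiset formulation you mention at the start). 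If you wanted a fully self-contained write-up, the growth-diagram route is probably the cleanest to make rigorous, since the local rules are finitely many case checks and the transposition symmetry is built into their statement.
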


Schensted's theorem~\cite[Theorem~1]{Sch61}, presented here as Theorem~\ref{thm:Greene's theorem}, articulates an important relationship between the RSK partition shape and the one-line notation for $w$.

\begin{theorem}\label{thm:Greene's theorem}
Given a permutation $w$, the length of the longest increasing (resp., decreasing) subsequence in the one-line notation of $w$ is the size of the first row (resp., column) of $\P(w)$.
\end{theorem}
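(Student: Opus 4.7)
My plan is to prove Schensted's theorem in two parts, both by induction on the number $i$ of letters inserted.

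For the first-row (increasing subsequence) claim, I would strengthen the inductive hypothesis to the following invariant about the partial tableau $\P_i(w)$: if the entries of the first row of $\P_i(w)$ are $a_1<a_2<\dots<a_r$, then for each $k \in \{1,\ldots,r\}$, $a_k$ is the minimum possible value of the last entry of an increasing subsequence of length $k$ in $w(1),\dots,w(i)$. The base case is immediate. For the inductive step, I would trace the effect of inserting $w(i+1)$ into $\P_i(w)$: either $w(i+1)$ exceeds $a_r$ and is appended to row $1$, in which case the witnessing length-$r$ subsequence ending at $a_r$ extends by $w(i+1)$ to length $r+1$; or $w(i+1)$ bumps $a_k$ where $a_{k-1}<w(i+1)<a_k$, so the new $k$-th row-$1$ entry $w(i+1)$ still witnesses a length-$k$ increasing subsequence (append $w(i+1)$ to the old length-$(k-1)$ witness ending at $a_{k-1}$) and is minimal by the choice of $k$. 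The row-$1$ length of $\P(w)=\P_n(w)$ then equals the longest increasing subsequence of $w$.

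For the first-column (decreasing subsequence) claim, I would work with the \emph{insertion row} of each letter $w(i)$, namely the row in which the new box of $\P_i(w)$ appears relative to $\P_{i-1}(w)$. Since the number of rows of $\P(w)$ is the maximum over $i$ of these insertion rows, it suffices to prove by induction that the insertion row of $w(i)$ equals the length $d_i$ of the longest decreasing subsequence of $w(1),\dots,w(i)$ ending at $w(i)$. When $w(i)$ is inserted and its bumping chain descends from row $r$ to row $r+1$ by bumping some entry $y>w(i)$, the letter $y$ was itself inserted earlier and (by the inductive hypothesis for that earlier step, applied at successive rows using a Greene-style analysis of bumping paths) certifies a length-$r$ decreasing subsequence ending at $y$; appending $w(i)$ produces one of length $r+1$ ending at $w(i)$. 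Conversely, the pivots along the bumping chain together with $w(i)$ themselves assemble into a decreasing subsequence witnessing the inequality in the opposite direction.

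The main obstacle will be in the column argument: the row-$1$ invariant is transparent, but extending the bookkeeping to lower rows so that the bumping chain actually produces the claimed decreasing subsequence requires carefully tracking the history of which entries bumped which. One tempting shortcut would be to combine Proposition~\ref{prop:P w inverse is Q w} with an involution on $S_n$ exchanging increasing and decreasing subsequences while transposing $\P$; however, the transpose-compatibility statement is typically established using Schensted's theorem itself, so the most honest route is the direct bumping-chain argument sketched above. If desired, one could alternatively prove both claims uniformly by establishing the Greene invariant for $k=1$ (the row case) and its transpose (the column case), but this is essentially the same calculation packaged differently.
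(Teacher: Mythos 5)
The paper does not prove this statement; it is quoted from Schensted~\cite{Sch61} as background. So the proposal must be judged on its own merits.

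Your row argument (the invariant that the $k$th entry of row~1 of $\P_i(w)$ is the minimum last entry of a length-$k$ increasing subsequence of $w(1),\dots,w(i)$) is the standard and correct one, modulo one routine omission: in the bumping case you should also note that the entries $a_{k'}$ for $k'\neq k$ retain their optimality (no length-$k'$ subsequence ending at $w(i+1)$ can beat them, because $w(i+1)>a_{k-1}\ge a_{k'}$ for $k'<k$ and $w(i+1)<a_k\le a_{k'-1}$ for $k'>k$). However, the column argument rests on a false invariant. You claim that the row of the new box created by inserting $w(i)$ equals $d_i$, the length of the longest decreasing subsequence of $w(1),\dots,w(i)$ ending at $w(i)$. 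Take $w=312$. After inserting $3$ and then $1$, the tableau has first row $[1]$ and second row $[3]$. Inserting $w(3)=2$ appends it to the end of row~1 with no bumping, so its insertion row is $1$; but $d_3=2$, witnessed by the decreasing subsequence $3,2$. The problem is structural: once a large value has already been bumped below row~1, a later smaller letter can extend a decreasing chain through it while still being appended to row~1 without bumping anything, so the pointwise equality cannot hold. The inequality $\max_i(\text{insertion row of }w(i))=\max_i d_i$ is still true, but it is not a consequence of the pointwise claim, so the proposed induction does not close.

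The shortcut you dismiss is in fact the cleanest repair. The identity $\P(w(n)\cdots w(1))=\P(w)^{T}$ is standardly proved \emph{without} Schensted's theorem: one shows by a direct commutation lemma on bumping paths that row-inserting $a$ then column-inserting $b$ equals column-inserting $b$ then row-inserting $a$, from which row-insertion of a word equals column-insertion of its reverse, and column-insertion of a word is the transpose of row-insertion of that word. Reversal exchanges increasing and decreasing subsequences, and transposition exchanges rows and columns, so the column part of the theorem follows from the row part you already proved. Alternatively, if you want a purely row-insertion argument, the correct local invariant to carry is not about the terminal row of the bumping path of $w(i)$ but about the whole first column: show that the $r$th entry of the first column of $\P_i(w)$ is the minimum possible last entry (equivalently, minimum value) among decreasing subsequences of $w(1),\dots,w(i)$ realizing a suitable extremal quantity, which is a genuine Greene-style bookkeeping exercise and more delicate than your sketch acknowledges.
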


Due to Schensted's theorem, we can see that a permutation is fully commutative if and only if its RSK partition has at most two rows. We denote the set of values in the second row of the RSK insertion tableau of a permutation $w$ by $\Rowtwo{w}.$
More generally, we denote the set of values in the second row (resp., first row) of any tableau $T$ by 
$$\RowTwo(T) \ \ \text{ (resp., }\Rowone(T)).$$ 

Next we list some basic features of RSK insertion, which we may use without specific mention in the future. The following lemma is a consequence of the definition of RSK insertion.
\begin{lemma}\label{lem: rsk_basics} 
Let $w\in S_n$, and suppose $b$ bumps $z$ in the RSK insertion process for $w$.
Then $b<z$ and $b$ appears to the right of $z$ in the one-line notation of $w$. 
\end{lemma}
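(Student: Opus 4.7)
The plan is to dispatch the two claims in turn. The inequality $b<z$ follows immediately from the RSK row-insertion rule: when $b$ is inserted into a row of the current partial tableau, it replaces precisely the leftmost (and hence, since rows are strictly increasing, smallest) entry in that row that is strictly greater than $b$, and this replaced entry is by definition $z$.

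For the second claim, let $p$ denote the step of the RSK algorithm during which this bump takes place, and consider the full cascade of bumps triggered by inserting $w(p)$: write $b_0=w(p),b_1,b_2,\ldots$, where $b_{i-1}$ is inserted into row $i$ and bumps $b_i$ from row $i$. Then $(b,z)=(b_{i-1},b_i)$ for some $i\geq 1$, so it suffices to show that the positions of the $b_j$ in the one-line notation of $w$ strictly decrease in $j$. The case $i=1$ is clean: $b_0=w(p)$ sits at position $p$, while $b_1$ already occupies a cell of $\P_{p-1}(w)$ at the start of step $p$ and therefore first entered the tableau at some strictly earlier step, giving its position in $w$ less than $p$.

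The main obstacle is the case $i\geq 2$, where both $b_{i-1}$ and $b_i$ were present in the tableau before step $p$ and one must compare their entry times into row $1$. My plan is to induct on $i$ and trace $b_i$ back to the earlier step $s_i<p$ at which it was itself bumped from row $i-1$ into row $i$, invoking the classical monotonicity of bumping columns (the column of the bumped entry weakly decreases as the cascade descends). A careful analysis of the contents of row $i-1$ at step $s_i$---using that entries remain fixed in their columns until bumped out, and that $b_{i-1}<b_i$---shows that $b_{i-1}$ could not have been present in row $i-1$ at step $s_i$, for otherwise a value smaller than and to the left of $b_i$ would have been bumped in place of $b_i$. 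Hence $b_{i-1}$ must have entered the tableau strictly after $b_i$, giving the required inequality of positions in $w$.

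Finally, I would note that only the base case $i=1$ is strictly needed for applications in the present paper: by Theorem~\ref{thm:Greene's theorem}, a fully commutative (equivalently, $321$-avoiding) permutation has RSK tableau with at most two rows, so every bump in the algorithm is a row-$1$ bump and the base-case argument alone suffices for all subsequent uses of the lemma.
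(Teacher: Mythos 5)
Your argument for the inequality $b<z$ is correct, and your argument for the positional claim in the case of a row-$1$ bump (where $b$ is inserted directly from the one-line notation) is also correct. You rightly observe that this is the only case the paper uses: every permutation to which the lemma is applied is fully commutative, so its insertion tableau has at most two rows and every bump occurs in row $1$. The paper gives no proof beyond calling the lemma ``a consequence of the definition of RSK insertion,'' which is consistent with this reading.

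Your sketch for deeper-row bumps, however, cannot be completed, because the statement is actually \emph{false} in that generality. Take $w=2431$. After inserting $2,4,3$, the tableau has first row $2,3$ and second row $4$. Inserting $1$ bumps $2$ out of row $1$; then $2$ is inserted into row $2$ and bumps $4$ into row $3$. So $2$ bumps $4$, yet $2=w(1)$ lies to the \emph{left} of $4=w(2)$ in the one-line notation. This exposes the gap in your sketch: you claim that if $b_{i-1}$ had been in row $i-1$ at step $s_i$, then ``a value smaller than and to the left of $b_i$ would have been bumped in place of $b_i$.'' But the value inserted into row $i-1$ at step $s_i$ is only guaranteed to be less than $b_i$, not less than $b_{i-1}$; if it lies strictly between $b_{i-1}$ and $b_i$ (as $3$ does between $2$ and $4$ above), it bumps $b_i$ and leaves $b_{i-1}$ untouched. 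Moreover, even if one could rule out $b_{i-1}$ being in row $i-1$ at step $s_i$, that would not show $b_{i-1}$ entered the tableau after $b_i$, since $b_{i-1}$ could have been sitting in a shallower row at that time. The right move is the one you end with: restrict the lemma to bumps out of the first row, in which case your base-case argument is the whole proof and the proposed induction should simply be discarded.
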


For permutation $v\in S_n$ and value $q\in\{1, \ldots, n\}$, let $c_v(q)$ be the column of $\P(v)$ into which $q$ is first inserted. 
Let $\textup{LIS}_v(q)$ be the length of a longest increasing subsequence of $v$ that ends with $q$. 
The following is a key result we will reference in our analysis. 

\begin{lemma}[{\cite[Lemma 3.3.3]{Sagan}}]
\label{lem: c equals LIS}
For $v\in S_n$ and $q\in\{1,\ldots, n\}$, we have $c_v(q) = \textup{LIS}_v(q)$.
\end{lemma}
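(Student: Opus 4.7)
The plan is to prove $c_v(q) = \textup{LIS}_v(q)$ by establishing the two inequalities separately, each by induction on the position of $q$ in the one-line notation of $v$; the base case (when $q$ is the first letter) is immediate. The engine of both directions is a monotonicity property of RSK row insertion, which is a direct consequence of Lemma~\ref{lem: rsk_basics}: inserting a letter into row $1$ either extends the row by one cell or replaces some entry by a strictly smaller one, so the entry in any fixed column of row $1$ only weakly decreases over time, and moreover the value currently sitting in column $j$ of row $1$ was itself first inserted into that same column $j$ (since entries of row $1$ never migrate sideways; they either stay or get bumped to row $2$).

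For $c_v(q) \geq \textup{LIS}_v(q)$, I would take an increasing subsequence $a_1 < \cdots < a_k = q$ of maximum length $k = \textup{LIS}_v(q)$. By induction $a_{k-1}$ is first placed in some column $c_v(a_{k-1}) \geq k-1$, so immediately after its insertion the entry in column $k-1$ of row $1$ is at most $a_{k-1}$ (either because $a_{k-1}$ itself was placed there, or because the leftmost entry strictly greater than $a_{k-1}$ lies further to the right). Monotonicity preserves this bound up through the moment $q$ is inserted, so columns $1, \ldots, k-1$ all hold entries strictly less than $q$, which forces $q$ into column at least $k$. For the reverse inequality, set $c = c_v(q)$; if $c \geq 2$, then column $c-1$ of row $1$ is occupied by some $x < q$ at the moment $q$ is inserted, and by the monotonicity fact $x$ was first inserted into column $c-1$ itself, so $c_v(x) = c-1$. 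Since $x$ appears before $q$ in the one-line notation of $v$, induction applied to $x$ yields an increasing subsequence of length $c-1$ ending at $x$; appending $q$ extends it to one of length $c$ ending at $q$, so $\textup{LIS}_v(q) \geq c$.

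I do not expect any genuine obstacle. The only step that is not essentially a single observation is the monotonicity property, but even this is a direct unpacking of the bumping rule. The main care needed is to set up the induction so that one applies the hypothesis only to values appearing strictly earlier in $v$, and to cleanly separate the subcase where $q$ (respectively $a_{k-1}$) extends row $1$ from the subcase where it bumps an existing entry.
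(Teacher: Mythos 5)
Your proof is correct, and it is essentially the standard argument (due to Schensted) that Sagan gives in the cited Lemma~3.3.3: proving the two inequalities by tracking the monotone behavior of row~$1$ under bumping. The paper itself offers no proof and simply defers to Sagan, so there is nothing to compare against beyond that reference.
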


One consequence of Lemma~\ref{lem: c equals LIS} is that certain values must be part of every longest increasing subsequence of a permutation.

\begin{corollary}\label{cor:q in every LIS}
For a permutation $v$, if $q$ is the only value in $v$ inserted into column $c_v(q)$ of $\P(v)$, then $q$ is in every longest increasing subsequence in $v$.
\end{corollary}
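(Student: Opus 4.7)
The plan is to use Lemma~\ref{lem: c equals LIS} to translate the hypothesis into a statement about the function $\textup{LIS}_v$, and then show that along any longest increasing subsequence, this function takes each value in $\{1,\dots,L\}$ exactly once (where $L$ is the length of the LIS). Once that is established, the value forced to sit in position $c = c_v(q)$ along any LIS must be $q$ itself by uniqueness.

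More concretely, I would first invoke Lemma~\ref{lem: c equals LIS} to observe that a value $q'$ of $v$ is inserted into column $c$ of $\P(v)$ if and only if $\textup{LIS}_v(q') = c$. Therefore, the hypothesis says that $q$ is the \emph{unique} value of $v$ with $\textup{LIS}_v(q)=c$. Next, let $\sigma = (a_1 < a_2 < \cdots < a_L)$ be any longest increasing subsequence of $v$, and consider the sequence of integers $\textup{LIS}_v(a_1),\textup{LIS}_v(a_2),\dots,\textup{LIS}_v(a_L)$.

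The key step is to show this sequence is strictly increasing. If $i<j$, then $a_i < a_j$ and $a_i$ appears before $a_j$ in $v$, so any increasing subsequence of $v$ ending at $a_i$ can be extended by appending $a_j$; hence $\textup{LIS}_v(a_j) \geq \textup{LIS}_v(a_i)+1$. Each term is at most $L$ (the global LIS length) and at least $1$, so the strictly increasing sequence of $L$ integers in $\{1,\dots,L\}$ must be exactly $1,2,\dots,L$. In particular, $\textup{LIS}_v(a_c) = c$. By the uniqueness observation above, $a_c = q$, so $q \in \sigma$. Since $\sigma$ was an arbitrary LIS, $q$ belongs to every LIS of $v$, as required.

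I do not expect a genuine obstacle here: the argument is essentially a bookkeeping exercise once Lemma~\ref{lem: c equals LIS} is applied. The only step that requires care is justifying the strict monotonicity of $\textup{LIS}_v$ along $\sigma$, which uses both the increasing condition on values and the preservation of left-to-right order within $\sigma$; this is a standard fact but worth stating explicitly.
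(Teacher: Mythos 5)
Your proof is correct, and since the paper states this corollary without an explicit argument (just as an immediate consequence of Lemma~\ref{lem: c equals LIS}), yours fills in exactly the expected details. The route you take — translating the hypothesis via $c_v(q)=\textup{LIS}_v(q)$ into uniqueness of the value with $\textup{LIS}_v = c$, observing that $\textup{LIS}_v$ is strictly increasing along any increasing subsequence and therefore pins down positions $1,\dots,L$ along any LIS, and then invoking uniqueness to force $a_c=q$ — is precisely the implicit argument. The one step you flagged as needing care, strict monotonicity of $\textup{LIS}_v$ along an LIS, is handled correctly: you use both that the values increase and that they appear in left-to-right order, so an optimal increasing subsequence ending at $a_i$ can be extended by $a_j$. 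No gaps.
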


The last result in this subsection highlights basic properties of RSK tableaux for fully commutative permutations.
\begin{lemma}
\label{lem:row 2 of P(w) appears in increasing order in w}
\label{lem:b1<b2<... and b1,b2,... apppear from left to right}
Let $w$ be a fully commutative permutation with $\Rowtwo{w} = \{z_1 < z_2 < \cdots < z_t\}$. For each $i\in \{1,\dots, t\}$, 
let $b_i$ be the value that bumps $z_i$ from the first row to the second row during the construction of $\P(w)$.  Then we have the following.
\begin{enumerate}[\hspace{.5cm}(a)]
\item The sequence $z_1 z_2 \cdots z_t$ is an (increasing) subsequence of $w$. In other words, the values $z_1, z_2,  \ldots, z_t$ appear from left to right in the one-line notation of $w$.  
\label{row_two_left_to_right}

\item 
The sets $\{z_1,\dots,z_t\}$ and $\{b_1,\dots,b_t\}$ are disjoint. 
In other words, during RSK insertion, no value can both bump something and be bumped by something. 
\label{bumper_cannot_be_bumped} 

\item The sequence $b_1 b_2 \dots b_t$ is an increasing subsequence of $w$.
\label{bump_left_to_right} 

\item Let $1\le i<j\le t$. During RSK insertion, the value $z_i$ is bumped before $z_j$.\label{bumped_left_to_right}
\end{enumerate}
\end{lemma}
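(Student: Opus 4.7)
The plan is to handle the four parts in the order (a), (b), (d), (c), using two main tools: the characterization of fully commutative permutations as 321-avoiders (Proposition~\ref{prop:tfae fully commutative}) and the observation from Lemma~\ref{lem: rsk_basics} that any bumper is strictly smaller than, and strictly to the right of, what it bumps.

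For (a), I would argue by contradiction. If the claim fails, there are indices $i<j$ with $z_j$ appearing to the left of $z_i$ in $w$; since $i<j$ we have $z_i<z_j$. Taking the bumper $b_i$ of $z_i$, which lies further to the right and satisfies $b_i<z_i$, the positions of $z_j$, $z_i$, and $b_i$ witness the strictly decreasing values $z_j>z_i>b_i$, a 321 pattern. Part (b) follows by the same template: if $b_i=z_j$ for some $i,j$, then $z_j=b_i<z_i$ forces $j<i$, and $z_j$ itself is bumped by $b_j$ at a position still further right with $b_j<z_j$; the positions of $z_i$, $b_i=z_j$, and $b_j$ then exhibit $z_i>z_j>b_j$, another 321 pattern. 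Distinctness of the $b_i$'s is automatic because each inserted value bumps at most one entry.

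For (d), the key structural input is that $\P(w)$ has at most two rows, which follows from Theorem~\ref{thm:Greene's theorem} applied to the 321-avoider $w$. Consequently, whenever a value is bumped out of row 1 it must land at the right end of row 2 rather than displace an existing row-2 entry (otherwise a third row would appear). This identifies the left-to-right order in row 2 with the chronological order of bumping, so the order $z_1<z_2<\cdots<z_t$ in row 2 coincides with the order in which these values are bumped. As a corollary, $b_1,b_2,\ldots,b_t$ are inserted into $\P(w)$ in that order, giving the left-to-right claim in (c).

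What remains is the inequality $b_i<b_{i+1}$ in (c), which I expect to be the main obstacle. I would again proceed by contradiction: suppose $b_i>b_{i+1}$ and examine the state of row 1 just before $b_{i+1}$ is inserted. By (b), the value $b_i$ has never been bumped, so it is still in row 1; meanwhile $z_{i+1}$ is also still in row 1, about to be bumped. Since $b_i<z_i<z_{i+1}$ and row 1 is increasing, $b_i$ sits strictly to the left of $z_{i+1}$ in row 1. But then $b_{i+1}<b_i<z_{i+1}$ forces the insertion of $b_{i+1}$ to bump an entry at or before $b_i$, never reaching $z_{i+1}$, contradicting the definition of $b_{i+1}$. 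The delicate point is tracking the state of row 1 at precisely the right moment; part (b) is what prevents $b_i$ from having quietly vanished from row 1 by the time $b_{i+1}$ is processed.
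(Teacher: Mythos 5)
Your proof is correct, and for parts (a) and (b) it is essentially the paper's argument (the paper phrases (a) in terms of an adjacent pair $z_i, z_{i+1}$ and then derives (b) directly from (a), while you exhibit explicit 321-patterns in both cases, but the mechanism is the same). The real divergence is in how you handle (c) and (d): the paper proves (c) first, entirely from 321-avoidance in the one-line notation, by showing that $b_i>b_{i+1}$ would force the consecutive subsequence $z_i\,z_{i+1}\,b_{i+1}\,b_i$ and then observing that $z_i$ would still be in row~1 when $b_{i+1}$ is inserted; it then derives (d) as a trivial corollary of (c). You reverse the order: you prove (d) directly from the two-row shape of $\P(w)$ (bumped values must append to row~2, so the left-to-right order in row~2 is the chronological bumping order), deduce the positional part of (c) from (d), and then get the value inequality $b_i<b_{i+1}$ by tracking the state of row~1 when $b_{i+1}$ is inserted, using (b) to guarantee $b_i$ has not left row~1. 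Both routes are sound. Your proof of (d) is arguably cleaner than the paper's, since it needs only the shape constraint and not the $b_i$-ordering; the paper's proof of (c) is self-contained within the one-line notation and so does not need to reason about intermediate tableau states at all for the positional claim. One small point worth making explicit in your writeup: the step ``$b_i$ is still in row~1'' relies on the tableau having at most two rows (otherwise $b_i$ could in principle have been bumped to row~2 without being some $z_j$); since $b_i\notin\RowTwo(\P(w))$ by (b), and row~2 is the only place a row-1 value can go, $b_i$ indeed stays put.
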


\begin{proof}\mbox{}
\begin{enumerate}[\hspace{.5cm}(a)]
\item Suppose, to the contrary, that $z_i$ appears to the right of $z_{i+1}$ for some $i$. 
Since $b_i$ bumps $z_i$ during the insertion algorithm, we know $b_i<z_i$, and the value $b_i$ occurs to the right of $z_i$ in the one-line notation of $w$. 
This means $z_{i+1}z_ib_i$ is a $321$-pattern in $w$, which is a contradiction. 

\item By (\ref{row_two_left_to_right}), we have that $z_1 \cdots z_t$ is an increasing subsequence of $w$. Hence, there are no $i$ and $j$ such that $z_i$ bumps $z_j$, and the sets $\{z_1,\ldots, z_t\}$ and $\{b_1,\ldots, b_t\}$ are therefore disjoint.

\item First, we show that $b_1 < \cdots < b_t$.
Suppose, to the contrary, that $b_i > b_{i+1}$ for some $i$. Since $z_i$ appears to the left of $b_i$ in the one-line notation for $w$ and $z_i > b_i$, 
the value $b_{i+1}$ must appear to the left of $b_i$ in order to avoid a 321-pattern in $w$. We also know $z_{i+1}$ appears to the left of $b_{i+1}$ in the one-line notation for $w$ and $z_{i+1} > b_{i+1}$. 
From (\ref{row_two_left_to_right}), we know 
$z_i z_{i+1}$ is a subsequence of 
the one-line notation for $w$. 
Combining all of these observations, we conclude that
\[z_i z_{i+1} b_{i+1} b_i\] is a subsequence of the one-line notation of $w$. 
So, since $z_i$ is bumped by $b_i$, immediately before $b_{i+1}$ is inserted, the value $z_i$ is still in the first row. 
This means that $b_{i+1}$ must bump a number no larger than $z_i$, which contradicts the assumption that $b_{i+1}$ bumps $z_{i+1}$. 
Therefore $b_1 < \cdots < b_t$. 

Now say for some $i$ that  $b_{i+1}$ occurs to the left of $b_i$ in the one-line notation for $w$. Since $b_i<b_{i+1}$, we would have the 321-pattern $z_{i+1}b_{i+1}b_i$ in $w$, which is a contradiction. Hence $b_1, \ldots, b_t$ occur from left to right in $w$. 
\item This follows from (\ref{bump_left_to_right}).
\end{enumerate}
\end{proof}

\subsection{Characterization of boolean RSK tableaux}\label{subsection.boolean.rsk}

While Schensted's Theorem (Theorem~\ref{thm:Greene's theorem}) guarantees the insertion tableau of a boolean permutation has at most two rows, not every $2$-row standard tableau is the insertion tableau of some boolean permutation. For example, the tableau $T_1$ below is the insertion tableau of the boolean permutation $w=315264 = \rw{21435}\in S_6$, but $T_2$ cannot be obtained as the insertion tableau of any boolean permutation.
\[
T_1 = \begin{ytableau}
1 & 2 & 4\\
3 & 5 & 6
\end{ytableau}\,, \quad
T_2 = \begin{ytableau}
1 & 2 & 3\\
4 & 5 & 6
\end{ytableau}\,
\]

We review the characterization of these tableaux from~\cite{GPRT1}. First we need to define when a set of integers is ``uncrowded.''

\begin{definition}\label{defn:balanced} 
Let $L$ be a set of integers. If, for all integers $x$ and $y$, with $x>0$, we have 
$$|[y,y+2x] \cap L| \le x+1,$$ 
then we will say that $L$ is \emph{uncrowded}.
Otherwise, we say that $L$ is \emph{crowded}.
\end{definition}

Let $T$ be a standard tableau with at most two rows. When $\RowTwo(T)$ is uncrowded, we also call the tableau $T$ an \emph{uncrowded tableau}, and $T$ is a \emph{crowded tableau} otherwise. In the example above, we can see that $T_1$ is an uncrowded tableau because its second row $\{3,5,6\}$ is an uncrowded set, while $T_2$ is a crowded tableau because
\[
|[4,4+2\cdot 1] \cap \{4,5,6\}| = 3 > 1+1\,.
\]

The following proposition, which is the combination of several results in~\cite{GPRT1}, provides a characterization of RSK tableaux coming from boolean permutations.

\begin{proposition}\label{cor:second row rules for balanced tableaux}
A standard tableau $T$ with at most two rows is the insertion (or recording) tableau of a boolean permutation if and only if $T$ is uncrowded.
\end{proposition}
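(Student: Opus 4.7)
The plan is to split the biconditional and to first observe that the ``insertion or recording'' ambiguity is harmless. Since $\Q(w) = \P(w^{-1})$ by Proposition~\ref{prop:P w inverse is Q w}, and since reversing any reduced word with all distinct letters produces a reduced word with all distinct letters for $w^{-1}$, the class of boolean permutations is closed under inversion by Proposition~\ref{prop:tfae boolean}. Hence a tableau is the recording tableau of some boolean permutation if and only if it is the insertion tableau of one, and it suffices to characterize the tableaux arising as $\P(w)$ for boolean $w$.

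For the forward direction, I would show that if $w$ is boolean then $\Rowtwo{w}$ is uncrowded. Suppose, toward a contradiction, that some window $[y, y+2x]$ meets $\Rowtwo{w}$ in at least $x+2$ values $z_i < z_{i+1} < \cdots < z_{i+x+1}$, with $x$ chosen minimal. Lemma~\ref{lem:b1<b2<... and b1,b2,... apppear from left to right} gives the corresponding bumpers $b_i < \cdots < b_{i+x+1}$, appearing in increasing order and to the right of the $z_j$'s in the one-line notation of $w$. Each bump $z_j \mapsto b_j$ is implemented by a specific simple reflection $s_k$ in any reduced word of $w$; the tight packing of the $z_j$'s within a window of width $2x+1$, combined with the inequalities $b_j < z_j$ and the interlacing of $b$'s and $z$'s, forces two of these bumps to share the same index $k$. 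That repeated $s_k$ contradicts Proposition~\ref{prop:tfae boolean}, so $\Rowtwo{w}$ must be uncrowded.

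For the backward direction, I would produce an explicit boolean realizer of an uncrowded tableau $T$. Write the first row as $R_1$ and the second row as $R_2 = \{z_1 < \cdots < z_t\}$, and let $b_j$ denote the entry of $R_1$ sitting directly above $z_j$ in $T$. Build $w$ by placing the unpaired entries of $R_1$ in their natural increasing positions and inserting each pair $(z_j, b_j)$ so that $z_j$ precedes $b_j$, with the pairs arranged compatibly with a fence-shaped boolean heap as in Figure~\ref{fig:fence_reduced_word21873456}. Running RSK on this $w$ recovers $T$, and uncrowdedness of $R_2$ translates precisely into the condition that no two of the pairs $(z_j, b_j)$ occupy the same simple reflection index, so the resulting reduced word of $w$ has all distinct letters and $w$ is boolean by Proposition~\ref{prop:tfae boolean}.

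The main obstacle is the backward direction. The forward direction is essentially a packing argument using the bump data of Lemma~\ref{lem:b1<b2<... and b1,b2,... apppear from left to right}, but reverse-engineering RSK while simultaneously enforcing the distinct-letter constraint from Proposition~\ref{prop:tfae boolean} requires careful bookkeeping. The crux is to verify that the numerical condition $|[y, y+2x] \cap R_2| \leq x+1$ is exactly what makes the fence construction succeed without collision, which is the substantive technical content developed in~\cite{GPRT1}.
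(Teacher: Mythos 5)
The paper does not actually prove this proposition: it introduces it as ``the combination of several results in~\cite{GPRT1}'' and defers entirely to the companion paper for the argument. So there is no in-paper proof to compare your attempt against, and what you have written should be judged as a standalone proof, which it is not. You concede this yourself in the final paragraph (``the substantive technical content developed in~\cite{GPRT1}''), and both directions rely on unverified assertions.

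In the forward direction, the pivotal claim is that ``each bump $z_j \mapsto b_j$ is implemented by a specific simple reflection $s_k$ in any reduced word of $w$,'' and that the packing of the $z_j$'s forces two such reflections to coincide. This is not a standard fact and is not obviously well defined: the number of bumps equals $|\Rowtwo{w}|$, which for a boolean $w$ is generally strictly smaller than $\ell(w) = |\supp(w)|$ (e.g.\ $w = 231 = s_1s_2$ has one bump but two letters in its reduced word), so there is no canonical bijection between bumps and letters of a reduced word, and a pigeonhole argument would need to specify which letter a bump ``owns'' and why ownerships of distinct bumps cannot overlap. Without that, the ``forces two of these bumps to share the same index'' step is an assertion, not an argument. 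In the backward direction, the construction is described only at the level of ``place the pairs $(z_j,b_j)$ compatibly with a fence,'' and the claim that uncrowdedness is \emph{exactly} the no-collision condition is stated, not proven; note also that a given uncrowded tableau can be the $\P$-tableau of many permutations, both boolean and non-boolean (e.g.\ $\begin{smallmatrix}1&2\\3&4\end{smallmatrix}$ is $\P(3142)$ with $3142$ boolean, but also $\P(3412)$ with $3412$ non-boolean), so the construction must be set up carefully to guarantee that RSK on the constructed word actually returns $T$ and that the word is reduced with distinct letters. Your reduction of the ``insertion or recording'' ambiguity via $\Q(w)=\P(w^{-1})$ and closure of boolean permutations under inversion is correct and is a genuine simplification; the rest would need the machinery of~\cite{GPRT1} to be filled in.
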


We define an {\it uncrowded (respectively, crowded) permutation} to be a permutation with an uncrowded (respectively, crowded) insertion tableau. By Proposition \ref{cor:second row rules for balanced tableaux}, a permutation is uncrowded exactly when it shares an insertion tableau with some boolean permutation.

\section{Fully commutative elements and the weak order} \label{sec:fully comm and the weak order}

From Theorem~\ref{thm:Greene's theorem}, we know that the RSK partition for a permutation has at most two rows if and only if the permutation is $321$-avoiding; that is, if and only if it is fully commutative. Boolean permutations, which avoid patterns $321$ and $3412$, are a special class of fully commutative permutations, and Proposition~\ref{cor:second row rules for balanced tableaux} fully characterized their RSK tableaux. 
In this section, we build upon Proposition~\ref{cor:second row rules for balanced tableaux} to study the insertion tableaux of fully commutative, but not necessarily boolean, permutations. 

\subsection{Boolean core}
\label{sec:boolean core}
We set the stage using the following lemma, which is little more than a restatement of the definition of fully commutative.

\begin{lemma}\label{lem:repeated letters in fully commutative}
Let $w$ be a fully commutative permutation and $\rw{u} \in R(w)$. If $j$ is a repeated letter in $\rw{u}$, then each pair of copies of $j$  must be separated by both $j+1$ and $j-1$ in $\rw{u}$. 
Put another way, if $x\prec y$ are elements of the heap $H_w$ both with label $j$ (i.e.,  $u_x=u_y=j$), then $H_w$ contains elements $p$ and $\tilde{p}$ with labels $k+1$ and $k-1$ such that $x\prec p \prec y$ and $x\prec \tilde{p} \prec y$. 
\end{lemma}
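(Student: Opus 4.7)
The plan is to first prove the reduced-word form of the statement—that in any $\rw{u}\in R(w)$, any two consecutive occurrences of a letter $k$ (i.e., with no other $k$ between them) must have both $k-1$ and $k+1$ among the intermediate letters—and then transfer to the heap-theoretic form via Proposition~\ref{prop:set of labeled linear extensions is commutativity class}. The reduced-word form is exactly the first assertion of the lemma; the second assertion will follow because any pair of $j$-labeled heap elements contains a consecutive pair, and labeled linear extensions of $H_w$ realize all of $R(w)$.

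I would proceed by strong induction on the number $r$ of letters between the two consecutive copies of $k$. The base cases $r=0$ and $r=1$ are quick: $r=0$ gives the non-reduced factor $kk$, while $r=1$ either contains the forbidden factor $k(k\pm 1)k$ from Proposition~\ref{prop:tfae fully commutative} (when the middle letter fails to commute with $k$) or, via a commutation move, reduces to the $r=0$ case.

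For $r\ge 2$, assume for contradiction that $k+1$ does not appear among $x_1,\dots, x_r$ (the $k-1$ case is symmetric); note each $x_i\neq k$ by consecutiveness. If $x_1$ commutes with $k$ (equivalently, $x_1\neq k-1$), a commutation move produces a reduced word for $w$ with consecutive $k$'s at distance $r-1$ and intermediate letters $x_2,\dots,x_r$; the induction hypothesis then demands a $k+1$ among them, contradicting the assumption. A symmetric argument forces $x_r=k-1$. The crux is now to pick the smallest $p>1$ with $x_p=k-1$ (which exists because $x_r=k-1$); then $x_1$ and $x_p$ are consecutive copies of the letter $k-1$ at distance $p-2<r$, and the induction hypothesis applied to $k-1$ requires both $k$ and $k-2$ to appear among $x_2,\dots,x_{p-1}$. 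But no $x_i$ equals $k$, giving the contradiction.

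For the heap-theoretic statement, given $x\prec y$ in $H_w$ both labeled $j$, I would first reduce to the case where no other $j$-labeled element lies strictly between $x$ and $y$; this is possible because any two $j$-labeled elements are comparable in $H_w$ and thus form a chain. Picking any labeled linear extension—which is a reduced word in $R(w)$ by Proposition~\ref{prop:set of labeled linear extensions is commutativity class}—places $x$ and $y$ at positions $a<b$; by the reduced-word version, a letter $j+1$ appears at some position strictly between $a$ and $b$, corresponding to a heap element $p$. Since $|u_p-u_x|=1$, the element $p$ is comparable to both $x$ and $y$ in the heap, and its position in the linear extension forces $x\prec p\prec y$. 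The analogous argument with $j-1$ produces $\tilde{p}$. The main obstacle throughout is the inductive step after both endpoints are forced to equal $k-1$: the trick is to recurse on the letter $k-1$ at strictly smaller distance and exploit the absence of $k$ between the two original copies to close the argument.
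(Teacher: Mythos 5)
The paper states Lemma~\ref{lem:repeated letters in fully commutative} without proof, remarking only that it is ``little more than a restatement of the definition of fully commutative,'' so there is no paper argument to compare against directly. Your proof is correct and fills that gap cleanly. The strong induction on the distance $r$ between consecutive copies of $k$ is the right frame: the base cases $r=0,1$ are dispatched by non-reducedness and the forbidden $k(k\pm1)k$ factor from Proposition~\ref{prop:tfae fully commutative}; in the inductive step, commuting a boundary letter past $k$ handles the generic case, and the essential trick --- when both $x_1$ and $x_r$ are forced to equal $k-1$, recurse on the letter $k-1$ at strictly smaller distance and use that no $k$ appears between them --- closes the argument. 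That recursion on a \emph{different} letter at smaller $r$ is exactly what is needed and is easy to miss. The reduction from arbitrary pairs of $j$'s to consecutive pairs, and the transfer to the heap statement via Proposition~\ref{prop:set of labeled linear extensions is commutativity class}, are both handled correctly: in particular you rightly observe that since the element $p$ carries label $j\pm1$ it is comparable to both $x$ and $y$ in $H_w$, so its position strictly between theirs in the linear extension pins down $x\prec p\prec y$. Boundary cases $k=1$ and $k=n-1$ are handled automatically by the same case analysis (the corresponding boundary letter simply cannot occur, forcing the commuting case or an immediate contradiction). No gaps.
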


A key feature of boolean permutations is that their reduced words contain no repeated letters. This property fails to hold for arbitrary fully commutative permutations, but, as we will show in the next result, every fully commutative permutation can be thought of as having a ``boolean core.'' More precisely, we can write any fully commutative permutation as the product of two permutations, one of which is boolean with the same support as the original permutation. As a result, every fully commutative permutation has a reduced word in which any repetition of letters occurs only after every letter in the support has appeared. 

\begin{theorem}
\label{thm:fully commutative decomposes via boolean}
Let $w$ be a fully commutative permutation. Then we can write $w = \core{w} w'$, where $\ell(w) = \ell(\core{w}) + \ell(w')$, the permutation $\core{w}$ is boolean, and $\supp(\core{w}) = \supp(w)$. Furthermore, this $\core{w}$ is uniquely determined by $w$.
\end{theorem}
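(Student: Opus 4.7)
The plan is to translate the problem into the language of heaps and identify a suitable order ideal. By Proposition~\ref{prop:set of labeled linear extensions is commutativity class}, a factorization $w = \hat{w}w'$ with $\ell(w) = \ell(\hat{w}) + \ell(w')$ corresponds to choosing an order ideal $I$ of $H_w$, reading a labeled linear extension of $I$ to obtain a reduced word for $\hat{w}$, and reading the remaining elements for $w'$. Requiring $\hat{w}$ to be boolean with $\supp(\hat{w}) = \supp(w)$ is, by Proposition~\ref{prop:tfae boolean}, equivalent to asking that $I$ contain exactly one element of each label in $\supp(w)$.

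My first observation is that for each $j \in \supp(w)$, the heap elements of label $j$ form a chain under $\preccurlyeq$: if $x < y$ are two indices with $u_x = u_y = j$, then $|u_x - u_y| = 0 \le 1$, so Definition~\ref{defn:heap of a reduced word} gives $x \prec y$. Let $x_j$ denote the unique minimum element of this chain, and set $I := \{x_j : j \in \supp(w)\}$.

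The main step, and the expected main obstacle, is verifying that $I$ is an order ideal of $H_w$. I would first handle a single covering relation: suppose $z \lessdot x_j$ in $H_w$, and let $i$ denote the label of $z$; by Lemma~\ref{lem:label cover}, $i \in \{j-1,j+1\}$. If $z \ne x_i$, then since the label-$i$ elements form a chain we have $x_i \prec z$, and Lemma~\ref{lem:repeated letters in fully commutative} applied to the pair $x_i, z$ provides heap elements $p$ and $\tilde{p}$ with labels $i+1$ and $i-1$ satisfying $x_i \prec p \prec z$ and $x_i \prec \tilde{p} \prec z$. Whichever of $p, \tilde{p}$ carries the label $j$ then lies strictly below $x_j$, contradicting the minimality of $x_j$; hence $z = x_i \in I$. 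For a general $y \prec x_j$, I would take any saturated chain $y = z_0 \lessdot z_1 \lessdot \cdots \lessdot z_k = x_j$ and iterate the covering argument downward: applying it to $z_{k-1} \lessdot x_j$ gives $z_{k-1} = x_{i_{k-1}}$, then applying it to $z_{k-2} \lessdot z_{k-1} = x_{i_{k-1}}$ gives $z_{k-2} = x_{i_{k-2}}$, and so on until $y = z_0 \in I$.

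Once $I$ is known to be an order ideal with exactly $|\supp(w)|$ elements, any labeled linear extension of $I$ is a reduced word of distinct letters, so the permutation $\hat{w}$ it defines is boolean by Proposition~\ref{prop:tfae boolean}; clearly $\supp(\hat{w}) = \supp(w)$, and extending to a labeled linear extension of $H_w$ realizes $w = \hat{w}w'$ with $\ell(w) = \ell(\hat{w}) + \ell(w')$. For uniqueness, any boolean left factor $\hat{w}_0$ of $w$ with $\supp(\hat{w}_0) = \supp(w)$ corresponds to an order ideal $I_0 \subseteq H_w$ containing exactly one element of each label in $\supp(w)$; if some $x \in I_0$ of label $j$ differed from $x_j$, the order-ideal property applied to $x_j \prec x$ would force $x_j \in I_0$ as well, producing two label-$j$ elements in $I_0$, a contradiction. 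Therefore $I_0 \subseteq I$, and the cardinality match forces $I_0 = I$ and hence $\hat{w}_0 = \hat{w}$.
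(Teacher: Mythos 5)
Your proof follows essentially the same route as the paper's: identify the minimal heap element of each label in $\supp(w)$, prove that this set is an order ideal by applying Lemma~\ref{lem:repeated letters in fully commutative} to a pair of equal-label elements sitting below a single covering relation, read off a labeled linear extension to obtain the boolean prefix, and derive uniqueness from the fact that an order ideal with one element per label must take the minimal one. The only differences are expository (you spell out the iteration from a single cover to general comparability and reverse the labeling convention in the covering step), not mathematical.
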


\begin{proof}
Fix a fully commutative permutation $w$ and $\rw{u} = \rw{u_1 \cdots u_\ell} \in R(w)$. Because $w$ is fully commutative, it has a unique heap $H_w$. 
Elements with the same label are comparable in $H_w$. Thus, for each $i\in \supp{(w)}$, we can take the smallest element $x$ in $H_w$ such that $u_x=i$. 
Let $C$ denote the set of all such smallest elements, for $i\in \supp{(w)}$.

We claim that $C$ is an order ideal of $H_w$, and we will show that this is true using a proof by contradiction. 
Suppose $x,y \in H_w$ such that 
$y \in C$ and
$x$ is covered by $y$.  
Let $u_x = j$, and so by Lemma~\ref{lem:label cover} $u_y = j\pm 1$.  
Suppose, for the purpose of obtaining a contradiction, that $x \not\in C$. Thus there exists $\tilde{x} \prec x$ with $u_{\tilde{x}} = j$. Then, by Lemma~\ref{lem:repeated letters in fully commutative}, there exist $p,\tilde{p} \in H_w$ such that $\tilde{x} \prec p\prec x$, $\tilde{x}\prec \tilde{p} \prec x$, $u_p = j+1$, and $u_{\tilde{p}} = j-1$. But then we would have $y \not\in C$, which is a contradiction.

Because $C$ is an order ideal of $H_w$, we can choose a labeled linear extension of $H_w$ whose first $|C|$ letters
are precisely $\supp(w)$. This produces a reduced word for $w$ whose leftmost $|C|$ letters are precisely $\supp(w)$. 

Finally we show that this $\core{w}$ is also unique. Recall that any prefix of a reduced word for $w$ corresponds to an order ideal of $H_w$. The condition $\supp{(\core{w})} = \supp{(w)}$ requires that we pick an order ideal of $H_w$ having $|\supp(w)|$ elements of distinct labels. Elements with the same label are comparable in $H_w$, meaning that we are forced to select the smallest one for each label.
\end{proof}

We refer to the boolean permutation $\core{w}$  
in Theorem~\ref{thm:fully commutative decomposes via boolean} as the \emph{(right) boolean core} of a fully commutative permutation, where ``right'' refers to the fact that $\core{w}$ is 
the maximal boolean permutation that is less than $w$ in the right weak order.

\begin{example}
The heap of the permutation $w=345619278$ in Example~\ref{ex:heap345619278} is given in Figure~\ref{fig:heap345619278}.
The boolean core of $w$ is $\core{w} = 314569278$, 
and its heap is given in
Figure~\ref{fig:fence_reduced_word21873456}. Note that the reduced word $\rw{21873456} \in R(\core{w})$ appears as the left prefix of the reduced word $\rw{21873456234} \in R(w)$.
\end{example}

Theorem~\ref{thm:fully commutative decomposes via boolean} can also be proved without the language of heaps, by inducting on the length of a permutation. 

\subsection{Containment under the weak order}

Theorem~\ref{thm:fully commutative decomposes via boolean} identifies the boolean core of a fully commutative permutation, which gives some sense of how fully commutative permutations can be viewed as ``elongations'' of boolean permutations. We can similarly consider lengthening a fully commutative permutation. This leads to an important property about insertion tableaux.

\begin{theorem}\label{thm:growing fully commutative means row 2 subsets}
Let $v$ and $w$ be fully commutative permutations such that $w = vs_i$ with $\ell(w) = \ell(v) +1$. Then $\Rowone(\P(v))\supseteq \Rowone(\P(w))$; equivalently, $\Rowtwo{v} \subseteq \Rowtwo{w}$.
\end{theorem}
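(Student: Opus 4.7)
\emph{Proof plan.} Set $a := v(i)$ and $b := v(i+1)$, so $a < b$, $w(i) = b$, and $w(i+1) = a$. The RSK insertions for $v$ and $w$ agree through step $i-1$, giving a common tableau $T := \P_{i-1}(v) = \P_{i-1}(w)$ with rows $R := \Rowone(T)$ and $L := \RowTwo(T)$. Writing $R^v_k := \Rowone(\P_k(v))$ and $L^v_k := \RowTwo(\P_k(v))$, and analogously for $w$, the strategy is to maintain the invariant $R^v_k \supseteq R^w_k$ for every $k \geq i+1$; since $\P_k(v)$ and $\P_k(w)$ always contain the same multiset of values, this is equivalent to $L^v_k \subseteq L^w_k$, and specializing to $k = n$ yields the theorem. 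Throughout, full commutativity of both $v$ and $w$ restricts every intermediate tableau to two rows, so any value bumped out of row~$1$ must append to row~$2$.

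The first step, establishing the invariant at $k = i+1$, is the crux. One first shows that $b > \max R$: otherwise $b^* := \min\{r \in R : r > b\}$ lies in $R$ at some position $k_{b^*} \leq i-1$, and the values $b^*, b, a$ then occur in $w$ at positions $k_{b^*} < i < i+1$ with $b^* > b > a$, forming a $321$-pattern and contradicting $w$ fully commutative by Proposition~\ref{prop:tfae fully commutative}. With $b > \max R$, there are two remaining cases. If $a > \max R$ as well, then $v$ appends both $a$ and $b$ to row~$1$, while in $w$ the value $b$ appends first and is then bumped into row~$2$ by $a$, giving $L^v_{i+1} = L \subseteq L \cup \{b\} = L^w_{i+1}$. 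If $a \leq \max R < b$, writing $a^* := \min\{r \in R : r > a\}$, both insertion orders bump $a^*$ into row~$2$ and append $b$ to row~$1$, producing $\P_{i+1}(v) = \P_{i+1}(w)$.

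The second step propagates the invariant through the remaining common insertions. Assume the invariant at step $k \geq i+1$ and let $c := v(k+1) = w(k+1)$. Since $R^v_k \supseteq R^w_k$ implies $\max R^v_k \geq \max R^w_k$, there are three sub-cases depending on where $c$ falls: both tableaux append $c$; only $\P_k(v)$ bumps an entry; or both bump. Whenever $v$'s insertion bumps an entry $r$ that differs from the one (if any) bumped in $w$'s insertion, one has $r \in R^v_k \setminus R^w_k$ and therefore $r \in L^w_k$, because the two tableaux hold the same values. Hence the newly added element of $L^v_{k+1}$ already lies in $L^w_k$ or equals the common bumped value, and the two-row property guarantees that each bumped entry appends to row~$2$; thus the invariant persists.

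The main obstacle is the Step~1 exclusion of the case $b \leq \max R$: recognizing that this single condition forces the $321$-pattern $b^* b a$ in $w$ and is therefore incompatible with $w$ being fully commutative. Once this observation is in place, the remainder of Step~1 reduces to two transparent computations, and Step~2 becomes set-theoretic bookkeeping governed by the inclusion $R^v_k \supseteq R^w_k$.
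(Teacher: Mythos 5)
Your proof is correct and follows essentially the same approach as the paper: the key observation that $v(i+1) > \max\Rowone(\P_{i-1})$ (which you deduce from a $321$-pattern in $w$ and the paper derives as eq.~(3.1)), the same two-case analysis at step $i+1$, and the same inductive propagation of the first-row containment through the remaining insertions. The only cosmetic difference is phrasing the induction in terms of "which tableau bumps," while the paper phrases it in terms of "whether the bumped entry still lies in $\Rowone(\P_k(w))$," but these amount to the same bookkeeping.
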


\begin{proof}
Let $v$ and $w$ be as in the statement of the result. So
$$w = v(1) \cdots  v(i-1) \, v(i+1) \, v(i) \, v(i+2) \cdots v(n),$$ 
with $v(i)<v(i+1)$. The permutation $w$ is fully commutative by assumption, so it is $321$-avoiding. Therefore, in fact, we have 
\begin{equation}\label{eqn:inequalities about growing fully commutative}
\begin{aligned}
v(j) &< v(i+1) \text{ for all $1 \le j \le i$, and } \\ v(j) &> v(i) \text{ for all $i+1 \le j \le n$}.
\end{aligned}
\end{equation}

Set $\P_{i-1} := \P_{i-1}(v) = P_{i-1}(w)$ to be the insertion tableau for the shared prefix $v(1) \cdots v(i-1)$ in the two permutations. 
To compute $\P_{i+1}(v)$, we insert $v(i)$ first and then $v(i+1)$; 
to compute $\P_{i+1}(w)$, we insert $v(i+1)$ first and then $v(i)$.

Consider first what happens when we insert $v(i)$ into $\P_{i-1}$. There are two cases to consider: 
either $v(i)$ bumps something out of the first row of $\P_{i-1}$, or $v(i)$ gets appended to the end of the first row of $\P_{i-1}$.

Suppose first that $v(i)$ bumps some $z$ out of the first row of $\P_{i-1}$. 
For $v(i)$ to do this, the value $z$ must have been the smallest number in that row larger than $v(i)$.
To create $\P_{i+1}(v)$ from $\P_i(v)$, the value $v(i+1)$ must be appended to the first row of $\P_i(v)$, because $v(i+1) > v(j)$  for all $1 \le j \le i$, by \eqref{eqn:inequalities about growing fully commutative}. 
To construct $\P_{i}(w)$, we again have that $w(i) = v(i+1)$ gets appended to the end of the first row of $\P_{i-1}$. When $w(i+1) = v(i)$ is inserted into $\P_i(w)$, it must bump the smallest value in 
$\Rowone(\P_{i-1}) \cup \{ v(i+1)\}$ that is larger than $v(i)$; this value must be $z$, as above, because $z<v(i+1)$.
Therefore, 
$\P_{i+1}(v)=\P_{i+1}(w)$, with $\{v(i), v(i+1)\}$ in the top row and $z$ in the second row. 

Because the rest of the entries in the one-line notations of $v$ and $w$ are identical, we can conclude from here that $\P(v) = \P(w)$.

Now suppose, for the remainder of the proof, that when $v(i)$ is inserted into $P_{i-1}$ it  is appended to the end of the first row of $\P_{i-1}$. 
In other words, $v(i)$ is larger than all values in $\Rowone(\P_{i-1})$. 
Then, when $v(i+1)$ is inserted into $\P_i(v)$, this new value is also appended to the end of the first row because $v(i+1) > v(i)$. In other words, $\P_{i+1}(v)$ is created by appending both $v(i)$ and $v(i+1)$ to the first row of $\P_{i-1}$.

To construct $\P_i(w)$, on the other hand, we first insert $v(i+1)$. This gets appended to the end of the first row of $\P_{i-1}$ because $v(i+1)$ is larger than all other values seen so far, by \eqref{eqn:inequalities about growing fully commutative}. In contrast, $v(i) < v(i+1)$, so $v(i)$ will bump something out of the first row of $\P_i(w)$ in order to form $\P_{i+1}(w)$. Everything in $\Rowone(\P_{i-1})$
is greater than $v(i)$, so $v(i)$ must bump $v(i+1)$ itself. Therefore, $\Rowone(\P_{i+1}(v)) = \Rowone(\P_{i+1}(w)) \cup \{v(i+1)\}$. And more to the point, $\Rowone{(P_{i+1}(v))}\supset \Rowone{(P_{i+1}(w))}$.

Combining \eqref{eqn:inequalities about growing fully commutative} with the fact that $v(i)$ is larger than every letter in $\Rowone(\P_{i-1})$, we have that $v(i+1), \ldots, v(n)$ must each be larger than every letter in $\Rowone(\P_{i-1}) \cup \{v(i)\}$. 
Therefore, all future insertions performed during the computation of both $\P(v)$ and $\P(w)$ will not bump any letter of 
$\Rowone(\P_{i-1}) \cup \{ v(i) \}$ out of the first row. That is, everything in the first row from $v(i)$ leftward will remain unchanged during the remaining steps of the insertion algorithm.

We will prove that $\Rowone(\P(v))$
contains all of $\Rowone(\P(w))$, using an inductive argument with $\P_k(v)$ and $\P_k(w)$, for $i+1 \le k \le n$.
We have shown the base case:
$\Rowone{(\P_{i+1}(v))}\supset \Rowone{(\P_{i+1}(w))}$. Assume, inductively, that for some $k\geqslant i+1$, we have $\Rowone{(\P_{k}(v))}\supseteq \Rowone{(\P_{k}(w))}$. There are two ways for $v(k+1)$ to be inserted into $\P_k(v)$: either it gets appended to the end of the top row of the tableau, or it bumps some value $z$.
\begin{itemize}
    \item If $v(k+1)$ gets appended to $\Rowone(\P_k(v))$, then everything in $\Rowone(\P_k(v))$ is less than $v(k+1)$. 
    Because $\Rowone{(\P_{k}(v))}\supseteq \Rowone{(\P_{k}(w))}$,
    all numbers in $\Rowone(P_k(w))$ must also be less than $v(k+1)$. 
    Therefore, $\P_{k+1}(w)$ is formed from $\P_k(w)$ by appending $v(k+1)$ to the end of the first row as well, and thus $\Rowone(\P_{k+1}(v)) \supseteq \Rowone(\P_{k+1}(w))$.
    
    \item If $v(k+1)$ bumps some $z\in \Rowone(\P_k(v))$, then $z$ is the smallest value in $\Rowone(\P_k(v))$ that is larger than $v(k+1)$. 
    We must now consider whether or not $z$ was in $\Rowone{(\P_k(w))}$. If not, then there is nothing to worry about and we are done. On the other hand, if $z\in \Rowone(\P_k(w))$, then, because $\Rowone(\P_{k}(w)) \subseteq \Rowone(\P_{k}(v))$, this 
    $z$ must also be the smallest number in $\Rowone(P_k(w))$ that is larger than $v(k+1)$. 
    Therefore, when we insert $v(k+1)$ into $\P_k(w)$, we will also bump $z$.
\end{itemize}

Thus the induction holds at all stages of the insertion algorithm, and hence 
$\Rowone(\P(v)) \supseteq \Rowone(\P(w))$.
The tableaux have height at most $2$, and so 
$\Rowtwo{v}\subseteq \Rowtwo{w}$, as well.
\end{proof}

We highlight several facts relevant to upcoming arguments in Section \ref{sec:adding to balanced}.
\begin{remark}\label{rem:facts}
For $v$ and $w$ fully commutative permutations with $w=vs_i$, $\ell(w)=\ell(v)+1$, and $\P(v)\neq \P(w)$, the following are established within the proof of Theorem~\ref{thm:growing fully commutative means row 2 subsets}:

\begin{enumerate}[\hspace{.5cm}(a)]
\item $v(k)<v(i+1)$ for $k<i$, and $v(k)>v(i)$ for $k>i$;\label{fact.0}
\item the value $v(i)$ does not bump anything in $\P(v)$,
and $v(i)\in \Rowone(\P(v))$;\label{fact.1}
\item $v(i)$ bumps $v(i+1)$ in $\P(w)$, and $v(i)\in \Rowone(\P(w))$;\label{fact.2}
\item $\Rowone(\P(v))\cap [1,v(i)] = \Rowone(\P_{i}(v)) = \Rowone(\P_{i+1}(w)) = \Rowone(\P(w))\cap [1,v(i)]$. 
\label{fact.3} 
\end{enumerate}
\end{remark}

Because the length of the first row of a permutation's shape is determined by the length of a longest increasing subsequence in the permutation, we can use Theorem~\ref{thm:growing fully commutative means row 2 subsets} to characterize when the insertion tableaux of $v$ and $vs_i$ are unequal.

\begin{corollary}\label{cor:insertion tableaux unequal along covering relation}
Let $v$ and $w$ be fully commutative permutations such that $w = vs_i$, with $\ell(w) = \ell(v) + 1$. Then $\P(v) \neq \P(w)$ if and only if every longest increasing subsequence in $v$ uses both $v(i)$ and $v(i+1)$. In particular, when $\P(v) \neq \P(w)$, we have $|\Rowtwo{w}| = |\Rowtwo{v}|+1$.
\end{corollary}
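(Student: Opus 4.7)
The plan is to reduce the biconditional to a statement about longest increasing subsequence lengths, using Schensted's theorem (Theorem~\ref{thm:Greene's theorem}). First I would observe that since $\P(v)$ and $\P(w)$ are both standard tableaux of size $n$ with at most two rows, each is determined by its first row. By Theorem~\ref{thm:growing fully commutative means row 2 subsets} we have $\Rowone(\P(v)) \supseteq \Rowone(\P(w))$, so $\P(v) \neq \P(w)$ if and only if $|\Rowone(\P(v))| > |\Rowone(\P(w))|$; equivalently, by Schensted, the longest increasing subsequence of $v$ is strictly longer than that of $w$.

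Next I would compare the increasing subsequences of $v$ and $w$ directly. Because $w(i) = v(i+1) > v(i) = w(i+1)$, any increasing subsequence of $w$ uses at most one of the positions $i$ and $i+1$, hence at most one of the values $v(i), v(i+1)$. Conversely, any increasing subsequence of $v$ using at most one of positions $i, i+1$ remains increasing in $w$: the underlying set of values is unchanged, and shifting a single position from $i$ to $i+1$ (or vice versa) does not disturb the positional ordering, because the other of $i, i+1$ is not used. This would give the identity that the longest increasing subsequence of $w$ has the same length as the longest increasing subsequence of $v$ that does not use both $v(i)$ and $v(i+1)$.

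From this identity the biconditional follows immediately: $\P(v) \neq \P(w)$ holds exactly when every longest increasing subsequence of $v$ uses both $v(i)$ and $v(i+1)$. For the final assertion I would note that deleting either $v(i)$ or $v(i+1)$ from any longest increasing subsequence of $v$ produces an increasing subsequence of $w$, so the longest increasing subsequence of $w$ can be shorter than that of $v$ by at most one. Hence, when $\P(v) \neq \P(w)$, we have $|\Rowone(\P(w))| = |\Rowone(\P(v))| - 1$; since both tableaux have exactly $n$ cells, this yields $|\Rowtwo{w}| = |\Rowtwo{v}| + 1$. The main technical step is the bijection between increasing subsequences of $w$ and those of $v$ that use at most one of the two swapped positions; once this is verified, the rest is a short chain of equivalences.
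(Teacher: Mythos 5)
Your proposal is correct and follows essentially the same route as the paper: use Theorem~\ref{thm:growing fully commutative means row 2 subsets} to reduce the comparison of tableaux to a comparison of first-row (equivalently, LIS) lengths via Schensted's theorem, then argue that the LIS length drops under the adjacent swap precisely when every LIS of $v$ uses both $v(i)$ and $v(i+1)$. The only difference is one of detail: the paper asserts the key claim that swapping $v(i)$ and $v(i+1)$ changes the LIS length if and only if every longest increasing subsequence uses both, whereas you spell out the supporting correspondence (increasing subsequences of $w$ versus increasing subsequences of $v$ that use at most one of positions $i,i+1$, via a positional shift at those two positions), which makes the argument more self-contained.
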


\begin{proof}
Note that $|\Rowtwo{w} \setminus \Rowtwo{v}| \le 1$, because the length of the longest increasing subsequence changes by at most one after swapping adjacent values in a position.
Since $\Rowtwo{v} \subseteq \Rowtwo{w}$
by Theorem~\ref{thm:growing fully commutative means row 2 subsets}, we have that $\Rowtwo{v} \subsetneq \Rowtwo{w}$ if and only if the size of the first row of $\P(v)$ is one more than the size of the first row of $\P(w)$.
By Schensted's theorem (Theorem~\ref{thm:Greene's theorem}), this holds if and only if
the length of a longest increasing subsequence of $v$ is one more than the length of a longest increasing subsequence of $w$.
Swapping $v(i)$ and $v(i+1)$ changes this length if and only if 
every longest increasing subsequence in $v$ uses both $v(i)$ and $v(i+1)$.
It follows that when $\Rowtwo{v} \subsetneq \Rowtwo{w}$, the set $\Rowtwo{w}$ contains exactly one more element than $\Rowtwo{v}$.
\end{proof}

Theorem~\ref{thm:growing fully commutative means row 2 subsets} has other implications for the weak order on fully commutative elements.

\begin{corollary} Let $v$ and $w$ be fully commutative permutations.
\begin{enumerate}[\hspace{.5cm}(a)]
\item
If $v$ is less than $w$ in the right weak order, then $\Rowtwo{v} \subseteq \Rowtwo{w}$.

\item
If $v$ is less than $w$ in the left weak order, then 
$\RowtwoQ{v} \subseteq \RowtwoQ{w}$.
\end{enumerate}
\end{corollary}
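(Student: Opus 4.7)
The plan is to reduce to a saturated chain in the right weak order and apply Theorem~\ref{thm:growing fully commutative means row 2 subsets} one cover at a time. For part~(a), suppose $v \le w$ in the right weak order and fix a saturated chain $v = v_0 \lessdot v_1 \lessdot \cdots \lessdot v_k = w$ with $v_{j+1} = v_j s_{i_j}$ and $\ell(v_{j+1}) = \ell(v_j)+1$ for each $j$. Once it is known that every $v_j$ is fully commutative, Theorem~\ref{thm:growing fully commutative means row 2 subsets} yields $\Rowtwo{v_j} \subseteq \Rowtwo{v_{j+1}}$ at each step, and chaining these containments gives $\Rowtwo{v} \subseteq \Rowtwo{w}$.

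The only real obstacle is verifying fully commutativity of the intermediate $v_j$. The key fact is that $u \le w$ in the right weak order implies the (value) inversion set of $u$ is contained in that of $w$. If some $v_j$ contained a $321$-pattern at values $a < b < c$, then the pairs $(a,b)$, $(a,c)$, and $(b,c)$ would all be inversions of $v_j$, and hence of $w$, forcing $c,b,a$ to appear in that left-to-right order in $w$ and producing a $321$-pattern in $w$. This contradicts Proposition~\ref{prop:tfae fully commutative}, so every $v_j$ is $321$-avoiding and therefore fully commutative, as needed.

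Part~(b) then reduces to part~(a) via the inverse map. The assignment $u \mapsto u^{-1}$ is a poset isomorphism from the left weak order onto the right weak order, and Proposition~\ref{prop:P w inverse is Q w} gives $\P(u^{-1}) = \Q(u)$, so $\P(u^{-1})$ has the same shape as $\P(u)$; by Theorem~\ref{thm:Greene's theorem}, this shows that $v^{-1}$ and $w^{-1}$ are fully commutative whenever $v$ and $w$ are. Applying part~(a) to the pair $v^{-1} \le w^{-1}$ in the right weak order yields $\RowTwo(\P(v^{-1})) \subseteq \RowTwo(\P(w^{-1}))$, and rewriting insertion tableaux of inverses as recording tableaux via Proposition~\ref{prop:P w inverse is Q w} converts this into $\RowtwoQ{v} \subseteq \RowtwoQ{w}$, as desired.
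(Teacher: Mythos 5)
Your proof is correct and follows the same route as the paper: iterate Theorem~\ref{thm:growing fully commutative means row 2 subsets} along a saturated chain for part~(a), then transfer to part~(b) via the inverse map and Proposition~\ref{prop:P w inverse is Q w}. You simply make explicit two facts the paper treats as routine — that the fully commutative permutations form an order ideal in the right weak order (so the intermediate chain elements are fully commutative), and that $u\mapsto u^{-1}$ is a poset isomorphism from the left weak order to the right weak order preserving full commutativity — both of which you verify correctly.
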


\begin{proof}
Statement (a) follows immediately from Theorem~\ref{thm:growing fully commutative means row 2 subsets}. Statement (b) follows from (a) and Proposition~\ref{prop:P w inverse is Q w}. 
\end{proof}

There is another important implication of Theorem~\ref{thm:growing fully commutative means row 2 subsets}, in conjunction with Theorem~\ref{thm:fully commutative decomposes via boolean}. This allows us to show the relationship between the insertion tableaux of a fully commutative element and that of its boolean core.

\begin{corollary}\label{cor:fully commutative and boolean core tableaux}
Let $w$ be a fully commutative permutation and $\core{w}$ its boolean core. Then
$$\Rowone(\P(\core{w})) \supseteq \Rowone(\P(w)) \ \text{ and } \ \Rowtwo{\core{w}} \subseteq \Rowtwo{w}.$$
\end{corollary}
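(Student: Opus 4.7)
The plan is to derive this corollary directly from Theorem~\ref{thm:fully commutative decomposes via boolean} and the preceding corollary about right weak order containment of second rows. Theorem~\ref{thm:fully commutative decomposes via boolean} supplies a factorization $w = \core{w}\,w'$ with $\ell(w) = \ell(\core{w}) + \ell(w')$, which places $\core{w}$ below $w$ in the right weak order. Because $\core{w}$ is boolean it is in particular fully commutative (combining Propositions~\ref{prop:tfae fully commutative} and~\ref{prop:tfae boolean}), so both endpoints of this weak order chain are fully commutative and the preceding corollary applies, yielding $\Rowtwo{\core{w}} \subseteq \Rowtwo{w}$ immediately.

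For the first-row inclusion I plan to argue by complementation. Viewing $w$ and $\core{w}$ as elements of the same $S_n$, the insertion tableaux $\P(w)$ and $\P(\core{w})$ are both standard with at most two rows and entries forming exactly $\{1,\ldots,n\}$, so each first row is the complement inside $\{1,\ldots,n\}$ of the corresponding second row. Applying this observation to the inclusion just established gives $\Rowone(\P(\core{w})) \supseteq \Rowone(\P(w))$.

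There is no real obstacle in this argument; it is an assembly of two results that have already been set up. The only point requiring any care is that $\core{w}$ and $w$ genuinely share the same ambient symmetric group, so that the complementation step in the second paragraph is legitimate and the two displayed containments in the corollary are indeed dual to one another.
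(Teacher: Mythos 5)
Your proposal is correct and matches the paper's intended argument: the paper states this corollary without proof, explicitly presenting it as a direct consequence of Theorem~\ref{thm:fully commutative decomposes via boolean} (placing $\core{w}$ below $w$ in right weak order) combined with Theorem~\ref{thm:growing fully commutative means row 2 subsets}. Your derivation of the first-row containment by complementation is a fine way to close the loop; equally one could chain the first-row inclusions of Theorem~\ref{thm:growing fully commutative means row 2 subsets} directly along a saturated chain from $\core{w}$ to $w$, since that theorem already records both the $\Rowone$ and $\RowTwo$ statements as equivalent.
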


The following example illustrates this result.

\begin{example}\label{ex:boolean core and fc tableaux}
Let 
$v=41623785 = \rw{32154673}$, $w=vs_5=41627385 = \rw{321546735}$, and let $\core{v}=41263785 = \rw{3215467}$ denote their common boolean core.
We can see that $\core{v}< v < w$ in the right weak order.
The RSK insertion algorithm produces
\[\P(\core{v})=\P(v)=
\young(12358,467) 
\qquad \text{ and } \qquad
\P(w)=\young(1235,4678)\,. 
\]
We have $\Rowone{(P(\core{v}))}\supseteq \Rowone{(P(v))}\supseteq \Rowone{(P(w))}$ and $\Rowtwo{\core{v}}\subseteq \Rowtwo{v} \subseteq \Rowtwo{w}$.
\end{example}

\section{Insertion tableaux dynamics}\label{sec:adding to balanced} 

Throughout this section, we will restrict our attention to certain important scenarios, and we will highlight our assumptions for the reader in centered boxed text. To begin, we will assume throughout this section that
$$\framebox{ $v$ and $w$ are fully commutative permutations with $w = vs_i$ and $\ell(w) = \ell(v) + 1.$ 
}$$

In Theorem~\ref{thm:growing fully commutative means row 2 subsets}, we learned that 
$$\Rowtwo{v} \subseteq \Rowtwo{w}.$$
Corollary~\ref{cor:insertion tableaux unequal along covering relation} gave conditions that determine exactly when $\P(v)\neq \P(w)$ in terms of the longest increasing subsequences of $v$. We next want to understand the entries of these  tableaux when they are unequal. In particular, if $\P(v)\neq \P(w)$, is it possible for $\P(w)$ to be uncrowded? Said another way, if the insertion tableau changes along a covering relation in the right weak order, can the covering permutation be uncrowded? 
If $i \not\in \supp(v)$, then this could certainly be the case. Consider, for example, when $v$ is the identity. On the other hand, if $i \in \supp(v)$, then, as we shall see, the answer to the question is \emph{no}.

Recall our assumptions in this section: $v$ and $w$ are fully commutative permutations (that is, they avoid $321$) with $w = vs_i$ and $\ell(w) = \ell(v) + 1$.
Let $M$ and $m$ be the values defined in Lemma~\ref{lem:Ten12}:
$$\framebox{ $M := \max\{v(j) : j \le i\}$ \textup{ \;\;\; and \;\;\; }
 $m := \min\{v(j) : j \ge i+1\}.$ }$$
 Our first lemma shows these values are part of a 3142-pattern in $v$ whenever $i\in\supp{(v)}$.

\begin{lemma}\label{lem:3142 pattern in v}
Suppose 
$i \in \supp(v)$. Then $v$ has a $3142$-pattern formed by $M\, v(i)\, v(i+1)\, m$.
\end{lemma}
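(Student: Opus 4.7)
The plan is to exhibit the four entries $M$, $v(i)$, $v(i+1)$, $m$ at four distinct positions $p_M < i < i+1 < p_m$ of $v$ and to verify that their values satisfy $v(i) < m < M < v(i+1)$, which is exactly the rank structure of a $3142$-pattern.

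First I would collect the inequalities already at hand. Because $\ell(w) = \ell(v)+1$ we have $v(i) < v(i+1)$, and because $w=vs_i$ is also $321$-avoiding the relations~\eqref{eqn:inequalities about growing fully commutative} from the proof of Theorem~\ref{thm:growing fully commutative means row 2 subsets} apply, giving $v(j)<v(i+1)$ for every $j\le i$ and $v(j)>v(i)$ for every $j\ge i+1$. In particular $M<v(i+1)$ and $m>v(i)$. Combining these with Lemma~\ref{lem:Ten12}, which is available because $i\in\supp(v)$ and yields $M>i$, $m\le i$, and $M>m$, produces the chain
\[ v(i) \;<\; m \;\le\; i \;<\; i+1 \;\le\; M \;<\; v(i+1). \]

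Next I would upgrade two of these to strict inequalities in the value sense, so that all four entries land at distinct positions. To rule out $M=v(i)$: if $M=v(i)$ then $v(i)=M>i$, hence $m>v(i)>i$, contradicting $m\le i$; therefore $M>v(i)$, and so $M$ is attained at some position $p_M<i$. To rule out $m=v(i+1)$: if $m=v(i+1)$ then $v(i+1)=m\le i$, so $v(j)<v(i+1)\le i$ for every $j\le i$ would force $i$ distinct positive integers into $\{1,\dots,i-1\}$, which is impossible; therefore $m<v(i+1)$, and so $m$ is attained at some position $p_m>i+1$.

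Finally I would assemble the pattern: the positions $p_M<i<i+1<p_m$ are distinct, the values $v(i)<m<M<v(i+1)$ are distinct with the claimed relative order, and reading them in position order realises the pattern $3142$ with entries $(M,v(i),v(i+1),m)$. The main content is the pair of small contradictions used to upgrade to strict value inequalities; everything else is just chaining the bounds in~\eqref{eqn:inequalities about growing fully commutative} with Lemma~\ref{lem:Ten12}.
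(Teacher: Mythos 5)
Your proof is correct, and it follows the same basic plan as the paper: establish the chain of strict inequalities $v(i)<m<M<v(i+1)$ using the $321$-avoidance of $w$ together with Lemma~\ref{lem:Ten12}. The one noteworthy difference in arrangement is that you interpose the numerical bounds $m\le i<i+1\le M$ (the ``$m<i+1$'' and ``$M>i$'' clauses of Lemma~\ref{lem:Ten12}) between the endpoints, whereas the paper instead invokes $m<M$ from Lemma~\ref{lem:Ten12} and then separately argues $M>v(i)$ and $m<v(i+1)$ by exhibiting a $321$-pattern in $w$ whenever one of these fails. Your version is slightly tighter, because the chain $v(i)<m\le i<i+1\le M<v(i+1)$ \emph{already} forces $M>v(i)$ and $m<v(i+1)$ (and hence that $M$ is attained at some position strictly left of $i$ and $m$ strictly right of $i+1$); as a result, your second paragraph ``Next I would upgrade two of these\ldots'' is redundant --- the two small contradiction arguments there re-derive facts that the chain has already given you. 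Those redundant arguments are themselves correct, so nothing is wrong, but you could delete that paragraph and go straight from the chain to the conclusion.
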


\begin{proof}
Because $i \in \supp(v)$, it follows from Lemma~\ref{lem:Ten12} that $m < M$, and $M \ge v(i)$ and $m \le v(i+1)$ by definition. Because $w=vs_i$ and $\ell(w) > \ell(v)$, we must have $v(i) < v(i+1)$. 

Next we argue that $M>v(i)$. Suppose $M=v(i)$. Then $m<M=v(i)<v(i+1)$, 
so $v(i+1)\,v(i) \,m$ will form a $321$-pattern in $w$, which is a contradiction. 
Therefore $M> v(i)$. Similarly we can show that $m<v(i+1)$.

Since $w$ cannot have a $321$-pattern, we also must have $M < v(i+1)$ and $m > v(i)$.
Therefore the subsequence $M\,v(i)\,v(i+1)\,m$ is a $3142$-pattern in $v$.
\end{proof}

In fact, $321$-avoidance, the maximality of $M$, and the minimality of $m$ force even more structure upon $v$.

\begin{corollary}\label{cor:3142 pattern in v has increasing runs}
Suppose $i \in \supp(v)$. Then
$$v = \cdots M \ a_1 \ \cdots \ a_h \ v(i) \ v(i+1) \ e_1 \ \cdots \ e_j \ m \cdots,$$
where
\begin{equation}\label{eqn:inequalities in v}
a_1 < a_2 < \cdots < a_h < v(i) < m < M < v(i+1) < e_1 < e_2 < \cdots < e_j.
\end{equation}
\end{corollary}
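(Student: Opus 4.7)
The approach is to pinpoint the positions of $M$ and $m$ in $v$, then repeatedly exploit $321$-avoidance to force the intermediate entries into increasing runs. First I would let $p$ and $q$ be the (unique) positions with $v(p)=M$ and $v(q)=m$; Lemma~\ref{lem:3142 pattern in v} gives $M>v(i)$ and $m<v(i+1)$, so $p\le i-1$ and $q\ge i+2$. Setting $h:=i-1-p$ and $j:=q-i-2$, the entries of $v$ strictly between positions $p$ and $i$ are $a_k:=v(p+k)$ for $1\le k\le h$, and those strictly between positions $i+1$ and $q$ are $e_k:=v(i+1+k)$ for $1\le k\le j$. The central chain $v(i)<m<M<v(i+1)$ of the displayed inequality is already contained in the proof of Lemma~\ref{lem:3142 pattern in v}, so the remaining task is to analyze the $a$'s and $e$'s.

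For the $a$'s I would make two observations, both driven by the maximality of $M$. \emph{Monotonicity:} if $a_{k_1}>a_{k_2}$ for some $k_1<k_2$, then $M,\,a_{k_1},\,a_{k_2}$ sit at positions $p<p+k_1<p+k_2\le i-1$ with strictly decreasing values (noting $M>a_{k_1}$ since $p+k_1\le i$), giving a $321$-pattern in $v$ and contradicting full commutativity. \emph{Upper bound:} if $a_k>v(i)$ for some $k$, then $M,\,a_k,\,v(i)$ at positions $p<p+k<i$ is again a $321$-pattern in $v$. Together these yield $a_1<\cdots<a_h<v(i)$.

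The $e$'s are handled by the mirror argument, this time using $m$ as the bottom of any forbidden $321$-pattern. Because $m$ is the minimum of $\{v(j):j\ge i+1\}$ and $v$ is a permutation, $e_k>m$ for every $k$. If $e_k<v(i+1)$ for some $k$, then $v(i+1),\,e_k,\,m$ at positions $i+1<i+1+k<q$ is a $321$-pattern; and if $e_{k_1}>e_{k_2}$ with $k_1<k_2$, then $e_{k_1},\,e_{k_2},\,m$ is a $321$-pattern. Both are ruled out, so $v(i+1)<e_1<\cdots<e_j$. Concatenating all of these inequalities produces the chain asserted in \eqref{eqn:inequalities in v}.

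There is no real obstacle once Lemma~\ref{lem:3142 pattern in v} is in hand; the proof is a short inventory of the forbidden $321$-patterns that any violation of the claimed ordering would create, with the extremality of $M$ (respectively $m$) supplying the top (respectively bottom) of each such pattern.
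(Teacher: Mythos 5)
Your proof is correct and follows exactly the approach the paper intends: the paper states this corollary without proof, introducing it with the remark that ``$321$-avoidance, the maximality of $M$, and the minimality of $m$ force even more structure upon $v$,'' and your argument is precisely that inventory of forbidden $321$-patterns with $M$ (resp.\ $m$) supplying the top (resp.\ bottom) of each. The positional bookkeeping ($p \le i-1$, $q \ge i+2$) and the monotonicity and bounding arguments for the $a_k$'s and $e_k$'s are all sound.
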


Let us now further suppose, for the remainder of this section, that
$$\framebox{ $i\in\supp{(v)}$ and $\P(v) \neq \P(w).$ }$$
Furthermore, we will
$$\framebox{ \text{maintain the notation established in Corollary~\ref{cor:3142 pattern in v has increasing runs}.} }$$

Corollary~\ref{cor:insertion tableaux unequal along covering relation} tells us that every longest increasing subsequence in $v$ must use both $v(i)$ and $v(i+1)$.
In particular, this means that $h \ge 1$ and $j \ge 1$.

Since $\RowTwo(\P(v))\subsetneq \RowTwo(\P(w))$, it also follows from Corollary~\ref{cor:insertion tableaux unequal along covering relation} that there is a unique value
$$e\in \Rowone(\P(v)) \cap \RowTwo(\P(w)).$$
 We will show that $e$ occurs after $v(i+1)$ in $v$, that $e>M$, and, finally, that $\RowTwo(P(w))$ is crowded as it contains too many integers 
in the interval $\{M, \dots ,e\}$.

The next sequence of lemmas describe certain values in the rows of $\P(v)$ and $\P(w)$. Recall for a permutation $v\in S_n$ and value $q\in\{1, \ldots, n\}$, we define $c_v(q)$ to be the column of $\P(v)$ into which $q$ is first inserted.

\begin{lemma}\label{lem:M is bumped}
In the construction of $\P(v)$ and $\P(w)$, the value
$M$ is bumped to the second row by one of $a_1, \ldots, a_h$.
\end{lemma}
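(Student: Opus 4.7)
The plan is to track the value $M$ through the RSK insertion algorithm, exploiting the fact that $v$ and $w$ have identical one-line notations on positions $1$ through $i-1$, so that $\P_{i-1}(v) = \P_{i-1}(w)$. It will suffice to prove that $M$ is bumped into row~$2$ by some $a_k$ during the construction of $\P_{i-1}(v)$: the identical bumping then happens in $\P_{i-1}(w)$, and once $M$ is in row~$2$ it cannot be displaced, since both $\P(v)$ and $\P(w)$ have at most two rows by Theorem~\ref{thm:Greene's theorem}.

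I begin with $\P(v)$. By Corollary~\ref{cor:3142 pattern in v has increasing runs}, the values $M, a_1, \ldots, a_h, v(i)$ occupy $h+2$ consecutive positions of $v$; write $j_M$ for the position of $M$, so that $a_1, \ldots, a_h, v(i)$ sit at positions $j_M+1, \ldots, i$. Since $M = \max\{v(j) : j \le i\}$, every entry $v(1), \ldots, v(j_M-1)$ is strictly less than $M$, and hence $M$ is appended to row~$1$ when it is inserted. Next, Remark~\ref{rem:facts}(\ref{fact.1}) states that $v(i)$ does not bump anything in $\P(v)$, so at step $i$ the value $v(i)$ is appended to row~$1$ of $\P_{i-1}(v)$. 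This forces every entry of $\Rowone(\P_{i-1}(v))$ to be less than $v(i)$, and in particular $M \notin \Rowone(\P_{i-1}(v))$ because $M > v(i)$. Combined with the fact that $M$ was in row~$1$ immediately after its own insertion, $M$ must have been bumped out of row~$1$ at some step strictly between $j_M$ and $i$. The only values inserted in that range are $a_1, \ldots, a_h$, so the bumper of $M$ is one of these.

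To close out, I note that once $M$ is placed into row~$2$ it remains there: since $\P(v)$ has at most two rows, every insertion into row~$2$ must be appended at the right end, for otherwise a bumped element would create a third row. Thus $M \in \RowTwo(\P(v))$, having been bumped there by some $a_k$. The same argument works verbatim for $\P(w)$: the insertions of $w(1), \ldots, w(i-1) = v(1), \ldots, v(i-1)$ are identical to those performed for $v$, so the same $a_k$ bumps $M$ into row~$2$ of $\P_{i-1}(w)$, and again $M$ is frozen there for the rest of the insertion. The main obstacle is pinning down the moment by which $M$ must have left row~$1$; this is precisely where Remark~\ref{rem:facts}(\ref{fact.1}) and the two-row constraint do the work.
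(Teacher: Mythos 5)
Your argument is correct and is essentially the same as the paper's: both hinge on Remark~\ref{rem:facts}(\ref{fact.1}), namely that $v(i)$ does not bump anything, to conclude that $M > v(i)$ cannot still sit in row~1 when $v(i)$ is inserted, so it was already bumped by one of the intervening values $a_1,\ldots,a_h$, and the bump happens identically in $w$ since positions $1,\ldots,i-1$ agree. The paper phrases the step via the column function $c_v$ (observing $c_v(M)<c_v(v(i))$), whereas you argue directly with membership in $\Rowone(\P_{i-1}(v))$; these are the same reasoning in slightly different clothing.
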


\begin{proof}
By Remark~\ref{rem:facts}(\ref{fact.1}), $v(i)$ does not bump anything in $\P(v)$, so we have that $c_v(M) < c_v(v(i))$. Since $v(i) < M$, this means some value that occurs between $M$ and $v(i)$ in $v$ must bump $M$ in $\P(v)$. Because the values prior to $v(i)$ are unchanged in $w$, $M$ will be bumped by that same value in $\P(w)$. 
\end{proof}

Just as we can track $M$ in the RSK insertion algorithm, we can determine the role of $m$ in the construction of $\P(v)$.

\begin{lemma}\label{lem:v(i+1) bumped by m and v(i)}
The value $v(i+1)$ is bumped by $m$ in $\P(v)$. 
\end{lemma}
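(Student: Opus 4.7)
The plan is to trace RSK insertion for $v$ across the consecutive block
$\cdots\, v(i)\, v(i+1)\, e_1 \cdots e_j\, m \cdots$
guaranteed by Corollary~\ref{cor:3142 pattern in v has increasing runs}, monitoring how $\Rowone$ evolves step by step. By Remark~\ref{rem:facts}(\ref{fact.3}), every entry of $\Rowone(\P_i(v))$ is at most $v(i)$, and by Remark~\ref{rem:facts}(\ref{fact.1}) the value $v(i)$ itself sits in $\Rowone(\P_i(v))$. Since $v(i) < v(i+1)$, inserting $v(i+1)$ next simply appends it to the right end of Row 1, making $v(i+1)$ the new maximum entry of that row.

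Next I would insert the $e_k$'s in turn. The chain of inequalities $v(i+1) < e_1 < e_2 < \cdots < e_j$ from Corollary~\ref{cor:3142 pattern in v has increasing runs} shows that each $e_k$ exceeds the current maximum of Row 1 at the moment it is inserted, so each $e_k$ is appended to the right end of Row 1 without bumping anything. Thus, immediately before $m$ is inserted, Row 1 consists of entries all bounded above by $v(i)$, followed by $v(i+1),\, e_1,\, \ldots,\, e_j$ in that increasing order. Inserting $m$ then produces the desired bump: by Corollary~\ref{cor:3142 pattern in v has increasing runs} we have $v(i) < m < v(i+1) < e_1 < \cdots < e_j$, so $m$ is strictly greater than every Row 1 entry lying in $[1, v(i)]$ and strictly less than each of $v(i+1), e_1, \ldots, e_j$. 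Hence the smallest entry of Row 1 exceeding $m$ is precisely $v(i+1)$, and so $m$ bumps $v(i+1)$ out of Row 1.

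The only subtle point—not really an obstacle so much as a prerequisite—is that Corollary~\ref{cor:3142 pattern in v has increasing runs} guarantees $v(i), v(i+1), e_1, \ldots, e_j, m$ appear as a contiguous block in the one-line notation of $v$, so no outside insertions can intervene between positions $i+1$ and the position of $m$ to disturb Row 1. Everything else follows directly from the bumping rule for RSK together with the inequalities already established.
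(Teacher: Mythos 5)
Your proof is correct and follows essentially the same route as the paper's: trace RSK insertion across the consecutive block from Corollary~\ref{cor:3142 pattern in v has increasing runs}, use Remark~\ref{rem:facts} to pin down Row~1 just before $m$ is inserted (namely that everything at or after $v(i)$ in Row~1 is $v(i),v(i+1),e_1,\dots,e_j$, with nothing strictly between $v(i)$ and $v(i+1)$), and conclude from $v(i)<m<v(i+1)$ that $m$ bumps $v(i+1)$. The only cosmetic difference is that you invoke Remark~\ref{rem:facts}(\ref{fact.3}) to bound $\Rowone(\P_i(v))$ by $v(i)$, while the paper gets the same state of Row~1 directly from Remark~\ref{rem:facts}(\ref{fact.1}) together with the appending steps.
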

\begin{proof}
Corollary~\ref{cor:3142 pattern in v has increasing runs} and Remark~\ref{rem:facts}(\ref{fact.1}) tell us that, just before $m$ is inserted in the process of constructing $\P(v)$, the first row contains $v(i)<v(i+1)<e_1<\dots<e_j$ with no element between $v(i)$ and $v(i+1)$.
By Lemma~\ref{lem:3142 pattern in v}, $v(i)<m<v(i+1)$, so $m$ will bump $v(i+1)$ in $\P(v)$. 
\end{proof}

Next, we apply Remark~\ref{rem:facts} and Lemma~\ref{lem:v(i+1) bumped by m and v(i)} to determine the position of $e$ in the one-line notation for $v$.
\begin{lemma}\label{lem:b after v(i+1)}
The value $e$ occurs after $v(i+1)$ in $v$.
\end{lemma}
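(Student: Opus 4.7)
My plan is to prove the statement by contradiction: suppose $e$ appears at some position $k\le i+1$ in $v$, and rule out each of the three subcases $k<i$, $k=i$, and $k=i+1$.

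A preliminary observation is that $e>v(i)$. Since $e\in\Rowone(\P(v))\cap\Rowtwo{w}$ and the two rows of a standard tableau partition its entries, we have $e\in\Rowone(\P(v))\setminus\Rowone(\P(w))$. By Remark~\ref{rem:facts}(\ref{fact.3}), however, the sets $\Rowone(\P(v))$ and $\Rowone(\P(w))$ agree on the interval $[1,v(i)]$, so $e$ must lie strictly above $v(i)$.

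For the case $k<i$, the key point is that the RSK tableau of a fully commutative (equivalently, $321$-avoiding) permutation has at most two rows (Theorem~\ref{thm:Greene's theorem}), so once a value is bumped out of row $1$ during insertion, it stays in row $2$ for the remainder of the process. Since $e\in\Rowone(\P(v))$ and $e$ enters the tableau at step $k$, this forces $e$ to remain in row $1$ through step $i$; in particular $e\in\Rowone(\P_i(v))$. By Remark~\ref{rem:facts}(\ref{fact.3}), $\Rowone(\P_i(v))=\Rowone(\P(v))\cap[1,v(i)]$, which forces $e\le v(i)$ and contradicts the preliminary observation.

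The remaining two cases are immediate. If $e=v(i)$, then Remark~\ref{rem:facts}(\ref{fact.2}) places $v(i)\in\Rowone(\P(w))$, so $v(i)\notin\Rowtwo{w}$, contradicting $e\in\Rowtwo{w}$. If $e=v(i+1)$, then Lemma~\ref{lem:v(i+1) bumped by m and v(i)} says $m$ bumps $v(i+1)$ from row $1$ of $\P(v)$, so $v(i+1)\notin\Rowone(\P(v))$, contradicting $e\in\Rowone(\P(v))$. I do not foresee any real obstacle here; the proof essentially combines Remark~\ref{rem:facts}(\ref{fact.3}) with the observation that the two-row constraint prevents any value ever bumped out of row $1$ from returning, which together pin down the possible positions of $e$.
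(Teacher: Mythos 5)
Your proof is correct and follows essentially the same route as the paper: both first use Remark~\ref{rem:facts}(\ref{fact.3}) to conclude $e>v(i)$, then rule out $e$ occupying any position $\le i+1$, and finish by invoking Lemma~\ref{lem:v(i+1) bumped by m and v(i)} to exclude $e=v(i+1)$. The only cosmetic difference is that you handle positions $k\le i$ by reapplying Remark~\ref{rem:facts}(\ref{fact.3}) to $\Rowone(\P_i(v))$, whereas the paper phrases the same step via the column inequality $c_v(e)>c_v(v(i))$.
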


\begin{proof}
By Remark~\ref{rem:facts}(\ref{fact.3}), we have 
\[\Rowone(\P(v))\cap [1,v(i)] = 
\Rowone(\P(w))\cap [1,v(i)].\]
Since $e\in \Rowone(\P(v))$ and $e\notin\Rowone(\P(w))$, we know $e>v(i)$. By Remark~\ref{rem:facts}(\ref{fact.2}), $v(i)$ does not bump anything in $\P(v)$ and $v(i)\in\Rowone(\P(v))$. Thus we have $c_v(e)>c_v(v(i))$, and  $e$ occurs after $v(i)$ in $v$. By Lemma~\ref{lem:v(i+1) bumped by m and v(i)}, $v(i+1)\in\RowTwo(\P(v))$, so $e\neq v(i+1)$. Hence $e$ occurs after $v(i+1)$ in $v$.
\end{proof}

Using Lemma~\ref{lem:b after v(i+1)}, we can show that $e$ does not bump anything during the construction of $\P(v)$.

\begin{lemma}\label{lem: b doesnt bump}
The value $e$ does not bump anything in $\P(v)$. 
\end{lemma}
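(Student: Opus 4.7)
The plan is to argue by contradiction, exploiting two ingredients: the containment $\Rowtwo{v}\subseteq\Rowtwo{w}$ from Theorem~\ref{thm:growing fully commutative means row 2 subsets}, and the left-to-right ordering of row-two entries supplied by Lemma~\ref{lem:row 2 of P(w) appears in increasing order in w}(\ref{row_two_left_to_right}).

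Suppose, for contradiction, that $e$ bumps some value $y$ when inserted into $\P(v)$. Let $t$ denote the position of $e$ in $v$; Lemma~\ref{lem:b after v(i+1)} gives $t\ge i+2$, while Lemma~\ref{lem: rsk_basics} forces $y>e$ and places $y$ at some position $\tau<t$ in $v$. Because $v$ is fully commutative, $\P(v)$ has at most two rows (by Theorem~\ref{thm:Greene's theorem}), so the bumped value $y$ settles in $\Rowtwo{v}$ and cannot be displaced further. Theorem~\ref{thm:growing fully commutative means row 2 subsets} then yields $y\in\Rowtwo{v}\subseteq\Rowtwo{w}$, so both $e$ and $y$ lie in $\Rowtwo{w}$ with $e<y$.

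Next, I would translate $y$'s position from $v$ into $w$. Since $w=vs_i$ differs from $v$ only by swapping the values at positions $i$ and $i+1$, and since $\tau<t$ with $t\ge i+2$, the value $y$ still appears strictly before position $t$ in $w$: if $\tau\notin\{i,i+1\}$ then $y$ is at the same position in $w$, while if $\tau\in\{i,i+1\}$ then $y$ ends up at some position in $\{i,i+1\}$, which is still less than $t$. Hence $y$ precedes $e$ in the one-line notation of $w$.

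The contradiction then comes from Lemma~\ref{lem:row 2 of P(w) appears in increasing order in w}(\ref{row_two_left_to_right}), which says that the entries of $\Rowtwo{w}$ appear in increasing order in $w$. Since $e<y$ are both in $\Rowtwo{w}$, this forces $e$ to precede $y$ in $w$, contradicting the previous paragraph. The only mild bookkeeping is handling the case $\tau\in\{i,i+1\}$, which is settled by the bound $t\ge i+2$ coming from Lemma~\ref{lem:b after v(i+1)}.
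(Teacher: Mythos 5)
Your proof is correct but takes a genuinely different route from the paper's. The paper argues by letting $q$ be the value bumped by $e$ in $\P(v)$ (so $q>e$, $q$ precedes $e$) and $q'$ be the value that bumps $e$ in $\P(w)$ (so $q'<e$, $q'$ follows $e$); since $e$ sits at a position $\ge i+2$ (Lemma~\ref{lem:b after v(i+1)}), the swap at positions $i,i+1$ does not change which of $q,q'$ precede or follow $e$, so $q\,e\,q'$ is a $321$-pattern in both $v$ and $w$, contradicting full commutativity. Your proof instead never looks at who bumps $e$ in $\P(w)$: you place the bumped value $y$ into $\Rowtwo{v}$, push it into $\Rowtwo{w}$ via the containment in Theorem~\ref{thm:growing fully commutative means row 2 subsets}, and then invoke the left-to-right ordering of second-row entries from Lemma~\ref{lem:row 2 of P(w) appears in increasing order in w}(\ref{row_two_left_to_right}) to contradict the fact that $y$ precedes $e$ in $w$. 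The paper's argument is the more elementary one, reducing directly to $321$-avoidance; yours is slightly more structural, outsourcing the combinatorial work to the two cited lemmas, and it avoids having to exhibit an explicit forbidden pattern. Both are valid, and the positional bookkeeping across the swap at positions $i,i+1$ is handled correctly in each.
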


\begin{proof}
Suppose, for the purpose of obtaining a contradiction, that $e$ bumps something in $\P(v)$. Then there exists a value $q$ such that $e<q$ and $q$ occurs before $e$ in $v$. By Lemma~\ref{lem:b after v(i+1)}, $e$ occurs after $v(i+1)$ in $v$, so $q$ occurs before $e$ in $w$ as well. However, $e$ is bumped in $w$, so there is a value $q'$ with $q'<e$ and $q'$ occurring after $e$ in $w$. This yields a 321-pattern in both $v$ and $w$, which is not possible. Hence $e$ does not bump anything in $\P(v)$.
\end{proof}

Define $e_0 := v(i+1)$. By Corollary~\ref{cor:3142 pattern in v has increasing runs}, we see that  $c_v(e_k) = c_v(v(i+1))+k$ for $0\le k\le j$. For $k > j$, we can then define (if any) $e_k$ to be the first value in the one-line notation for $v$ with $c_v(e_k) = c_v(v(i+1))+k$. Let $r$ be maximal so that $\{e_0, e_1, \ldots, e_r\} \subseteq \Rowtwo{v}$.
For all $0\leq k\leq r$, let $t_0:=m, t_1, \ldots, t_r$ be the values such that $t_k$ bumps $e_k$ in $\P(v)$. By Lemma~\ref{lem:row 2 of P(w) appears in increasing order in w}(\ref{bump_left_to_right}) we have $t_0<t_1<\cdots<t_r$, and these values appear from left to right in the one-line notation of $v$.

For a permutation $v\in S_n$ and a value $q\in\{1,\ldots, n\}$, recall that we define $\textup{LIS}_v(q)$ to be the length of a longest increasing subsequence of $v$ that ends with $q$. The next lemma shows that the columns into which the values $t_k$ are first inserted are the same in $\P(v)$ and $\P(w)$, for $0\leq k\leq r$.

\begin{lemma}\label{lem:t_k column unchanged}
For all $0\leq k \leq r$, $c_v(t_k) = c_w(t_k)$.
\end{lemma}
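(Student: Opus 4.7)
My plan is to prove the equality $c_v(t_k) = c_w(t_k)$ by induction on $k$, tracking the following invariant through the shared insertion process: immediately before $t_k$ is inserted into both $\P(v)$ and $\P(w)$, we have
\[
\Rowone(\P(v)) \setminus \Rowone(\P(w)) = \{e_k\}
\qquad \text{and} \qquad
\Rowone(\P(w)) \setminus \Rowone(\P(v)) = \emptyset.
\]
Writing $L := |\Rowone(\P_{i-1})|$, the invariant will force $t_k$ to bump at the column $L+2+k$ in both tableaux, which is the desired equality.

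For the base case $k=0$, I would combine Remark~\ref{rem:facts}(\ref{fact.2}) with Corollary~\ref{cor:3142 pattern in v has increasing runs}: after processing position $i+1$ in each permutation, $\Rowone(\P_{i+1}(v))$ is exactly $\Rowone(\P_{i+1}(w)) \cup \{v(i+1)\} = \Rowone(\P_{i+1}(w)) \cup \{e_0\}$, and the subsequent insertions $e_1, \ldots, e_j$ are each larger than every value currently in either first row, so each appends to both and the invariant persists up to the insertion of $t_0 = m$. Then Lemma~\ref{lem:v(i+1) bumped by m and v(i)} gives $c_v(m) = L+2$, and the chain $v(i) < m < e_1$ from Corollary~\ref{cor:3142 pattern in v has increasing runs} forces $m$ to bump $e_1$ at the same column $L+2$ in $w$.

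For the inductive step, I would first verify the invariant survives every intermediate insertion strictly between $t_{k-1}$ and $t_k$. By a short case analysis: any intermediate value $f$ either appends to both first rows simultaneously (because $F_v \supseteq F_w$ with only $e_k$ extra, so $f$ exceeding all of $F_v$ forces $f$ exceeding all of $F_w$), or $f$ bumps some value $z$ in $v$. In the bumping case $z \neq e_k$, since by definition $t_k$ is the value that bumps $e_k$, and a quick check (depending on whether $e_k > f$ or $e_k \leq f$) shows $z$ is still the smallest element of $F_w$ exceeding $f$, so $f$ bumps $z$ in $w$ as well. Thus the symmetric difference is preserved through every intermediate step. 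When $t_k$ itself is inserted, the invariant gives that $e_k$ occupies column $L+2+k$ in $\Rowone(\P(v))$ and that the same column of $\Rowone(\P(w))$ contains the next-larger element $e'$ of $\Rowone(\P(v))$ (or is empty, in which case $t_k$ is appended to $\P(w)$ at that column). Either way, $t_k$ lands at column $L+2+k$ in $w$, giving $c_v(t_k) = c_w(t_k)$, and the updated symmetric difference becomes $\{e'\}$; a brief bookkeeping check identifies $e' = e_{k+1}$ whenever $k+1 \leq r$, sustaining the induction.

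The main obstacle is the intermediate-insertion case analysis: I need to rule out that any value $f$ strictly between $t_{k-1}$ and $t_k$ in $v$ could bump $e_k$ in $\P(v)$ (which would break the invariant by coinciding with $t_k$) or otherwise behave differently in $w$ than in $v$. This ultimately reduces to the uniqueness of the bumping event for $e_k$, which is built into the definition of $t_k$.
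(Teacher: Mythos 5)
Your proof takes a genuinely different route from the paper's. The paper's argument is a short sandwich: by construction $c_v(t_{k+1}) = c_v(t_k)+1$; since the $t_k$ form an increasing subsequence appearing left to right, $c_w(t_{k+1}) > c_w(t_k)$; and since every increasing subsequence of $w$ is also one of $v$, Lemma~\ref{lem: c equals LIS} gives $c_w(t_k) \le c_v(t_k)$ for all $k$. With the base case $c_w(t_0)=c_v(t_0)$ (established essentially as you do), the chain $c_w(t_k)+1 \le c_w(t_{k+1}) \le c_v(t_{k+1}) = c_v(t_k)+1$ forces equality by induction. Your approach instead directly tracks the symmetric difference of the two first rows through the shared suffix of the one-line notation. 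This is more elementary --- no appeal to the $\textup{LIS}$ characterization of column indices beyond the base case --- but trades that for more local bookkeeping.

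The bookkeeping is mostly sound: your intermediate-insertion analysis correctly observes that $e_k \notin (f,z]$ because $z$ is the least element of $\Rowone(\P(v))$ exceeding $f$ and $z \ne e_k$, so $f$ bumps the same $z$ in $\P(w)$. However, the closing claim that ``the updated symmetric difference becomes $\{e'\}$; a brief bookkeeping check identifies $e'=e_{k+1}$'' hides a genuine gap. You explicitly allow the possibility that $e'$ does not exist (``or is empty, in which case $t_k$ is appended''), but in that branch the symmetric difference becomes \emph{empty} after $t_k$ is inserted, which destroys your invariant rather than sustaining it. You must rule this branch out, and that requires an extra idea: if the two first rows ever coincide at the same step of the shared suffix, the row-insertion dynamics (which depend only on the current first row and the next incoming letter) keep them identical from then on, forcing $\P(v)=\P(w)$ and contradicting the standing hypothesis $\P(v)\neq\P(w)$. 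Once you know $e'$ exists, it occupies column $c_v(t_k)+1 = c_v(e_{k+1})$ of $\Rowone(\P(v))$; since $e_{k+1}$ is by definition the first value inserted into that column and can only be bumped by $t_{k+1}$ (which appears after $t_k$), you get $e' = e_{k+1}$. With that patch supplied, your argument does go through.
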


\begin{proof}
By construction, we have $c_v(t_{k+1}) = c_v(t_k)+1$ and $c_w(t_{k+1}) > c_w(t_k)$ for $0 \le k < r$. Furthermore, since $\textup{LIS}_w(t_{k})\leq \textup{LIS}_v(t_{k})$, we know by Lemma~\ref{lem: c equals LIS} that $c_w(t_{k})\leq c_v(t_{k})$ for $0\le k \le r$. We prove the statement by induction on $k$.

First we show $c_v(t_0) = c_w(t_0)$. By Corollary~\ref{cor:3142 pattern in v has increasing runs} and Remark~\ref{rem:facts}(\ref{fact.2}), the first row of $\P_{i+j+1}(w)$ contains $v(i)$ and $e_1$, with no element between them. Because $v(i)<m<e_1$, we know that $m=t_0$ bumps $e_1$ in $\P(w)$. Since $c_w(e_1)=c_v(v(i+1))$ and $t_0$ bumps $v(i+1)$ in $\P(v)$, we have $c_w(t_0) = c_w(e_1) = c_v(v(i+1)) = c_v(t_0)$.

Next assume for some $0\leq k <r$ that $c_v(t_k)=c_w(t_k)$. Then we have 
\[
c_w(t_k)+1 \le c_w(t_{k+1}) \le c_v(t_{k+1}) = c_v(t_k)+1.
\]
Therefore $c_w(t_{k+1}) = c_v(t_{k+1})$, proving the statement.
\end{proof}

Since $e\in \Rowone(\P(v))$ occurs after $v(i+1)$ in $v$ and does not bump anything in $\P(v)$, it follows that $e=e_k$ for some $k>r$. Therefore the value $e_{r+1}$ exists, and by the definition of $r$, we have $e_{r+1}\in \Rowone(\P(v))$ with $e_{r+1}\leq e$. In fact, as a corollary to Lemma~\ref{lem:t_k column unchanged}, we can show that $e_{r+1}=e$.

\begin{corollary}\label{cor:b'=b}
We have $e_{r+1}\in\RowTwo(P(w))$, and so $e_{r+1}=e$.
\end{corollary}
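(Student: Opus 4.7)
Proof proposal.

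The plan is to establish the stronger claim, by induction on $k$, that $t_k$ bumps $e_{k+1}$ during the RSK construction of $\P(w)$ for every $0 \le k \le r$. Applying this at $k = r$ gives $e_{r+1} \in \RowTwo(\P(w))$. Combined with the facts (noted in the paragraph just before the corollary statement) that $e_{r+1} \in \Rowone(\P(v))$ and $e_{r+1} \le e$, together with the uniqueness of $e$ as the element of $\Rowone(\P(v)) \cap \RowTwo(\P(w))$, this forces $e_{r+1} = e$.

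The base case $k = 0$ is exactly what was verified inside the proof of Lemma~\ref{lem:t_k column unchanged}: the value $t_0 = m$ bumps $e_1$ in $\P(w)$. For the inductive step, suppose $t_{k-1}$ bumps $e_k$ in $\P(w)$. By Lemma~\ref{lem:t_k column unchanged}, $c_w(t_k) = c_v(t_k) = c_v(e_k) = c_v(v(i+1))+k$, so $t_k$ enters column $c_v(v(i+1))+k$ of the first row of $\P(w)$ when inserted. Since $t_k$ bumps $e_k$ in $\P(v)$, we have $t_k < e_k < e_{k+1}$, so the inductive step reduces to showing that at the moment $t_k$ is inserted into $\P(w)$, the first-row entry in column $c_v(v(i+1))+k$ is precisely $e_{k+1}$.

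To verify this, I would track a \emph{shift-by-one} invariant between $\P(v)$ and $\P(w)$ throughout the insertion process once position $i+1$ has been processed: the first rows of $\P(v)$ and $\P(w)$ agree in columns $1,\ldots,c_v(v(i))$, and for every $\ell \ge 1$ the entry in column $c_v(v(i))+\ell$ of $\P(w)$'s first row equals the entry in column $c_v(v(i))+\ell+1$ of $\P(v)$'s first row (whenever the latter is defined). Corollary~\ref{cor:3142 pattern in v has increasing runs} and Remark~\ref{rem:facts} set up this invariant after position $i+j+1$ is processed (with the deleted entry initially being $v(i+1)$), and the bump events $t_0,\ldots,t_{k-1}$ preserve it thanks to Lemma~\ref{lem:t_k column unchanged} and the inductive hypothesis. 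Once the invariant holds just before $t_k$ is inserted, column $c_v(v(i+1))+k = c_v(v(i))+k+1$ of $\P(w)$'s first row must hold whatever $\P(v)$ has in column $c_v(v(i+1))+k+1 = c_v(e_{k+1})$, namely $e_{k+1}$ (which, being in $\Rowone(\P(v))$, never leaves this column in $\P(v)$).

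The main obstacle is verifying that the shift invariant survives the intermediate insertions that occur between consecutive bump events. This requires a case analysis on whether an intermediate inserted value $y$ acts on row-1 columns $\le c_v(v(i))$ (where the two first rows agree, so $y$ behaves identically in both tableaux) or on columns $> c_v(v(i))$ (where one argues, in the spirit of Lemma~\ref{lem:t_k column unchanged} via Lemma~\ref{lem: c equals LIS} together with $321$-avoidance of $w$, that $y$'s bump or append column in $\P(w)$ is exactly one less than in $\P(v)$, keeping the shift intact). This bookkeeping carries the technical weight of the argument; once accomplished, the inductive step $t_k$ bumps $e_{k+1}$ in $\P(w)$ follows immediately from $t_k < e_{k+1}$, completing the proof.
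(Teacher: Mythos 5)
Your approach diverges substantially from the paper's. The paper does not track a first-row invariant between $\P(v)$ and $\P(w)$; it instead computes $c_w(e_{r+1})$ directly via Lemma~\ref{lem: c equals LIS}: since $e_{r+1}$ is the only value inserted into its column of $\P(v)$, Corollary~\ref{cor:q in every LIS} puts $e_{r+1}$ in every longest increasing subsequence of $v$, and Corollary~\ref{cor:insertion tableaux unequal along covering relation} then forces $\textup{LIS}_w(e_{r+1}) = \textup{LIS}_v(e_{r+1}) - 1$, whence $c_w(e_{r+1}) = c_v(e_{r+1}) - 1 = c_v(t_r) = c_w(t_r)$. This pins down the bump $t_r \to e_{r+1}$ in one stroke, with no per-step bookkeeping. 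Your plan of proving ``$t_k$ bumps $e_{k+1}$ in $\P(w)$ for all $k$'' by induction is a genuinely different, more hands-on route.

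However, your route as written has a concrete gap beyond the admitted undone bookkeeping: the invariant you state is false after the very first bump event. With $c := c_v(v(i))$, once $t_0 = m$ is inserted, column $c+1$ of $\Rowone(\P(w))$ holds $m$, while column $c+2$ of $\Rowone(\P(v))$ holds $e_1 \ne m$, violating your $\ell=1$ clause. The shift boundary is not fixed at $c_v(v(i))$; it moves rightward, sitting at $c_v(t_k)$ once $t_k$ has been processed (that is, the first rows agree through column $c_v(t_k)$, and the one-column offset begins only beyond that). Your proof of the inductive step in fact relies on the offset starting at exactly $c_v(v(i+1))+k$, so the invariant needs this re-parameterization to even state the step correctly. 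Additionally, the verification that the invariant survives insertions \emph{between} consecutive $t_k$'s — which you correctly identify as carrying ``the technical weight of the argument'' — is left entirely undone, and this is precisely what the paper's LIS-based argument lets one bypass. As it stands, the proposal is a plausible outline of a harder alternative proof, not a proof.
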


\begin{proof}
Since $e_{r+1}$ is the only value in $v$ inserted into column $c_v(e_{r+1})$ of $\P(v)$, we can apply Corollary~\ref{cor:q in every LIS} to conclude that $e_{r+1}$ is in every longest increasing subsequence in $v$. By Corollary~\ref{cor:insertion tableaux unequal along covering relation}, this implies that every longest increasing subsequence in $v$ ending with $e_{r+1}$ must use both $v(i)$ and $v(i+1)$. As a result, $\textup{LIS}_w(e_{r+1})=\textup{LIS}_v(e_{r+1})-1$. By Lemma~\ref{lem: c equals LIS}, $c_w(e_{r+1}) = c_v(e_{r+1})-1$.  We know $c_v(e_{r+1})-1= c_v(t_r)$ by definition, and by Lemma~\ref{lem:t_k column unchanged}, $c_v(t_r) = c_w(t_r)$. Hence $c_w(e_{r+1})=c_w(t_r)$. Since $t_r<e_{r+1}$, we conclude that $t_r$ bumps $e_{r+1}$ in $\P(w)$. Since $e_{r+1}\in\Rowone(\P(v))$, it follows that $e_{r+1}=e$.
\end{proof}

Next we show that $e_r$ and $e$ are consecutive values.

\begin{lemma}\label{lem:b = b_r + 1}
With notation as above, $e = e_r + 1$.
\end{lemma}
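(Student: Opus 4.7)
The plan is to argue by contradiction. Assume $e_{r+1} > e_r + 1$ and pick any integer $q$ with $e_r < q < e_{r+1}$; since $v$ is a permutation, $q$ appears in its one-line notation and so has a well-defined $c_v(q) = \textup{LIS}_v(q)$ by Lemma~\ref{lem: c equals LIS}. First I would rule out $c_v(q) \in \{c_v(e_r), c_v(e_r)+1\}$. The proof of Corollary~\ref{cor:b'=b} shows that $e_{r+1}$ is the only value inserted into column $c_v(e_r)+1$ of $\P(v)$, so $c_v(q) \neq c_v(e_r)+1$. The values ever inserted into column $c_v(e_r)$ of row~$1$ form a decreasing sequence starting from $e_r$, so any such value is at most $e_r$; since $q > e_r$, we get $c_v(q) \neq c_v(e_r)$. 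Hence either $c_v(q) \geq c_v(e_r)+2$ or $c_v(q) \leq c_v(e_r)-1$.

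In the first case, I would take an increasing subsequence $x_1 < \cdots < x_{c_v(q)} = q$ of $v$ of maximal length. A standard argument (if some $x_k$ admitted a longer LIS, one could splice it with $x_{k+1}, \ldots, q$ to overshoot $\textup{LIS}_v(q)$) gives $\textup{LIS}_v(x_k) = k$ for each $k$, so $c_v(x_{c_v(e_r)+1}) = c_v(e_r)+1$. By the uniqueness noted above, $x_{c_v(e_r)+1} = e_{r+1}$, contradicting $x_{c_v(e_r)+1} < q < e_{r+1}$.

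In the second case I would split on the position of $q$ relative to $e_r$ in $v$. If $q$ appears after $e_r$, appending $q$ to any longest increasing subsequence ending at $e_r$ yields an increasing subsequence of length $c_v(e_r)+1$ ending at $q$, contradicting $c_v(q) \leq c_v(e_r)-1$. If $q$ appears before $e_r$, I would first observe that $q \in \RowTwo(\P(v))$: in the final $\P(v)$, column $c_v(e_r)$ of row~$1$ holds $t_r$ (since $t_r$ bumps $e_r$ and then stays in row~$1$ forever by Lemma~\ref{lem:row 2 of P(w) appears in increasing order in w}(\ref{bumper_cannot_be_bumped})), so row-$1$ monotonicity forces every column $\leq c_v(e_r)-1$ of row~$1$ to contain a value $< t_r < e_r < q$, ruling out $q$ from row~$1$. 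But then $e_r$ and $q$ both lie in $\RowTwo(\P(v))$ with $e_r < q$, so Lemma~\ref{lem:row 2 of P(w) appears in increasing order in w}(\ref{row_two_left_to_right}) forces $e_r$ to appear before $q$ in $v$, contradicting the assumption. I expect this last sub-case to be the main obstacle, since it simultaneously uses the final configuration of $\P(v)$'s row~$1$, the monotonicity of row~$1$, and the left-to-right appearance of row-$2$ values along $v$; the other cases fall out cleanly once uniqueness of $e_{r+1}$ in its column is in hand.
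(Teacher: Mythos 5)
Your argument takes a genuinely different route from the paper's, organizing the case analysis by the \emph{column} $c_v(q)$ of an intermediate value $q$ rather than by $q$'s \emph{position} relative to $e_r$ and $e$ in the one-line notation. The paper fixes $q = e_r + 1$ and splits three ways on where $q$ sits relative to $e_r$ and $e$, getting in each branch a quick contradiction with a part of Lemma~\ref{lem:row 2 of P(w) appears in increasing order in w} or with the equality $c_v(e) = c_v(e_r)+1$. You instead rule out $c_v(q) \in \{c_v(e_r), c_v(e_r)+1\}$ directly and handle $c_v(q) \geq c_v(e_r)+2$ via the splice argument and uniqueness of $e_{r+1}$ in its column, and $c_v(q) \leq c_v(e_r)-1$ via row-monotonicity forcing $q$ into $\RowTwo(\P(v))$ and then a left-to-right contradiction. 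All of those steps check out. However, there is one genuine gap: your proof by contradiction assumes $e_{r+1} > e_r + 1$, which only covers half of the negation of $e_{r+1} = e_r + 1$. You still need to rule out $e_{r+1} < e_r$, i.e., you need $e_r < e$ established first. This is not automatic from the tableau shape (in a two-row SYT, $T_{2,j}$ and $T_{1,j+1}$ are in general incomparable), and your interval $(e_r, e_{r+1})$ is empty in that case, so your argument says nothing. The paper supplies this in its opening sentence: since $e_r, e \in \RowTwo(\P(w))$ and $e_r$ precedes $e$ in the one-line notation, Lemma~\ref{lem:row 2 of P(w) appears in increasing order in w}(\ref{row_two_left_to_right}) applied to $w$ gives $e_r < e$. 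Prepending that observation closes the gap and makes your proof a valid alternative.
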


\begin{proof}
Since $e$ occurs after $e_r$ and $e_r,e\in \RowTwo(P(w))$, Lemma~\ref{lem:row 2 of P(w) appears in increasing order in w}(\ref{bump_left_to_right}) shows that $e_r<e$. Suppose, for the purpose of obtaining a contradiction, that $e\neq e_r+1$, and so $e>e_r+1$.
We analyze where $e_r+1$ could occur in the one-line notation of $v$. First we argue that $e_r+1$ cannot occur after $e$. Suppose it occurs after $e$. Before $e_r+1$ is inserted into $\P(v)$, $e$ is in the first row and the element to the left of $e$ is either $e_r$ or $t_r$. Since $t_r<e_r<e_r+1<e$, the value $e_r+1$ will bump $e$, which contradicts the fact that $e\in \Rowone(\P(v))$.

Next we argue that $e_r+1$ cannot occur prior to $e_r$. Suppose $e_r+1$ is to the left of $e_r$. Before $e_r$ is inserted into $\P(v)$, if $e_r+1$ is in the first row, then $e_r$ will bump $e_r+1$, which contradicts Lemma~\ref{lem:row 2 of P(w) appears in increasing order in w}(\ref{bumper_cannot_be_bumped}). This forces $e_r+1 \in \Rowtwo{v}$, which, then, contradicts Lemma~\ref{lem:row 2 of P(w) appears in increasing order in w}(\ref{bumped_left_to_right}).

Therefore $e_r+1$ must occur after $e_r$ and before $e$, which implies $c_v(e_r)<c_v(e_r+1)<c_v(e)$. However, this is impossible since $c_v(e_r)+1=c_v(e)$. Hence $e=e_r+1$.
\end{proof}

The maximality of $M$ means that $M+1$ appears to the right of $v(i)$ in the one-line notation of $v$. Consider the set
$$[M+1, e_r] \setminus \{v(i+1) = e_0, e_1, \ldots, e_r\},$$
which has $e_r - (M+1) + 1 - (r+1)$ elements. These elements occur after $v(i)$ and are in $\Rowone(\P(v))$, so they must bump (some of) the $r$ elements $\{e_1, \ldots, e_r\}$ and nothing else, by definition of $r$. 
Therefore we get
$$e_r - M - (r+1) \le r,$$
and hence
\begin{equation}\label{eqn:b_r and M}
e_r - M \le 2r+1.
\end{equation}

Now consider the interval
$$I := [M, e].$$
This is a set of size $e - M + 1$, and we can use Lemma~\ref{lem:b = b_r + 1} and Equation~\eqref{eqn:b_r and M} to get
$$\big\vert I \big\vert = e - M + 1 = e_r + 1 - M + 1 \le 2r+3.$$
Moreover, the $(r+2)$-element set
$$ \{M, v(i+1) = e_0, e_1, \ldots, e_r\}$$
is a subset of $\Rowtwo{v}$.

We are now able to state the main result.

\begin{theorem}\label{thm:adding length and box but not support to uncrowded makes crowded}
Suppose that $v$ and $w$ are fully commutative permutations with $w = vs_i$, $\ell(w) = \ell(v) + 1$, and $i \in \supp(v)$. Suppose, moreover, 
that $\P(v) \neq \P(w)$. Then $w$ is a crowded permutation.
\end{theorem}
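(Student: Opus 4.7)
The plan is to apply Definition~\ref{defn:balanced} with the parameters $y := M$ and $x := r+1$, and to show that $\Rowtwo{w}$ packs too many integers into the interval $[M, M+2r+2]$ to be uncrowded. The preceding sequence of lemmas has already done all of the delicate combinatorial work: identifying the new second-row element $e = e_r + 1$ in $\Rowtwo{w}\setminus \Rowtwo{v}$, and producing the numerical bound $e_r - M \le 2r+1$ in~\eqref{eqn:b_r and M}. What remains is the bookkeeping.

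I would first invoke Theorem~\ref{thm:growing fully commutative means row 2 subsets} to get $\Rowtwo{v}\subseteq \Rowtwo{w}$. Combined with the $(r+2)$-element set $\{M, e_0, e_1, \ldots, e_r\}\subseteq \Rowtwo{v}$ exhibited just before the theorem, and the fact that $e \in \Rowtwo{w}$, this yields an $(r+3)$-element subset $\{M, e_0, e_1, \ldots, e_r, e\}$ of $\Rowtwo{w}$. Distinctness of these values is immediate from the chain of strict inequalities $M < e_0 < e_1 < \cdots < e_r < e$: the first inequality $M < e_0 = v(i+1)$ comes from Corollary~\ref{cor:3142 pattern in v has increasing runs}, the middle chain $e_0 < e_1 < \cdots < e_r$ follows from Lemma~\ref{lem:row 2 of P(w) appears in increasing order in w}(a) applied to these elements of $\Rowtwo{v}$, and the final inequality $e_r < e$ is Lemma~\ref{lem:b = b_r + 1}.

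Next, combining Lemma~\ref{lem:b = b_r + 1} with~\eqref{eqn:b_r and M} gives $e = e_r + 1 \le M + 2r + 2 = y + 2x$. Hence all $r+3$ of the distinct values $M, e_0, \ldots, e_r, e$ lie in the interval $[y, y+2x]$, so
\[
\bigl|[y, y+2x] \cap \Rowtwo{w}\bigr| \,\ge\, r+3 \,>\, r+2 \,=\, x+1,
\]
which violates the uncrowded condition of Definition~\ref{defn:balanced}. Therefore $\Rowtwo{w}$ is crowded, and $w$ is a crowded permutation, as required.

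There is essentially no real obstacle left to overcome: the sharp combinatorial input---identifying $e$ precisely as $e_r + 1$ and controlling $e_r - M$---has already been carried out in the preceding lemmas, and the choices $y = M$ and $x = r+1$ are forced by the arithmetic of~\eqref{eqn:b_r and M} together with Lemma~\ref{lem:b = b_r + 1}. The only mild care required is to verify the three strict inequalities above that guarantee the $r+3$ elements are genuinely distinct.
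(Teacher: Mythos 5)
Your proposal is correct and follows the paper's proof exactly. The paper states the argument more tersely -- "there are $r+3$ elements of the interval $I = [M,e]$ in $\Rowtwo{w}$, and $|I| \le 2r+3$, so $w$ is crowded" -- whereas you make the witness $(y,x)=(M,r+1)$ in Definition~\ref{defn:balanced} fully explicit and verify distinctness of the $r+3$ values, but the mathematical content is identical, relying on the same inputs (\eqref{eqn:b_r and M}, Lemma~\ref{lem:b = b_r + 1}, Corollary~\ref{cor:b'=b}, and Theorem~\ref{thm:growing fully commutative means row 2 subsets}).
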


\begin{proof}
As discussed above, there are $r+2$ elements of the interval $I$ in $\Rowtwo{v}$, and the interval $I$ contains at most $(2r+3)$ elements. By Theorem~\ref{thm:growing fully commutative means row 2 subsets}, Corollary~\ref{cor:b'=b}, and Lemma~\ref{lem:b = b_r + 1}, there are $r+3$ elements of the interval $I$ in $\Rowtwo{w}$, which means that $w$ must be crowded.
\end{proof}

A corollary of this result is an alternate characterization of uncrowded permutations

\begin{corollary}\label{cor.uncrowded}
Let $w$ be a fully commutative permutation with boolean core $\core{w}$. Then $w$ is uncrowded if and only if $\P(\core{w})= \P(w)$.
\end{corollary}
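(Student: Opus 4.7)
The plan is to prove the ``if'' direction immediately from Proposition~\ref{cor:second row rules for balanced tableaux}, then establish the ``only if'' direction by contrapositive, building a saturated chain from $\core{w}$ up to $w$ in the right weak order and applying Theorem~\ref{thm:adding length and box but not support to uncrowded makes crowded} at the first step where the insertion tableau changes.

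For the ``if'' direction, suppose $\P(\core{w}) = \P(w)$. Since $\core{w}$ is boolean, Proposition~\ref{cor:second row rules for balanced tableaux} gives that $\P(\core{w})$ is uncrowded, so $\P(w)$ is as well, meaning $w$ is uncrowded by definition.

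For the other direction, I would prove the contrapositive: assume $\P(\core{w}) \neq \P(w)$. By Theorem~\ref{thm:fully commutative decomposes via boolean}, write $w = \core{w} w'$ with $\ell(w) = \ell(\core{w}) + \ell(w')$, and refine this factorization into a saturated chain $\core{w} = u_0 < u_1 < \cdots < u_k = w$ in the right weak order by appending the letters of a reduced word for $w'$ one at a time, so that $u_{j+1} = u_j s_{i_j}$ with $\ell(u_{j+1}) = \ell(u_j) + 1$ at each step. Three observations about this chain are essential: first, each $u_j$ is fully commutative, since any of its reduced words extends to a reduced word for $w$ and a $321$-pattern in $u_j$ would force one in $w$; second, every letter of $w'$ already lies in $\supp(\core{w}) = \supp(w)$, so $\supp(u_j) = \supp(\core{w})$ for every $j$; and third, consequently $i_j \in \supp(u_j)$ for every $j$, since otherwise $\supp(u_{j+1})$ would strictly contain $\supp(w)$.

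Since $\P(u_0) \neq \P(u_k)$, there is a smallest index $j$ with $\P(u_j) \neq \P(u_{j+1})$. The hypotheses of Theorem~\ref{thm:adding length and box but not support to uncrowded makes crowded} are satisfied at this step: $u_j$ and $u_{j+1}$ are fully commutative, $u_{j+1} = u_j s_{i_j}$ is a covering relation, $i_j \in \supp(u_j)$, and $\P(u_j) \neq \P(u_{j+1})$. Therefore $u_{j+1}$ is crowded. Iterating Theorem~\ref{thm:growing fully commutative means row 2 subsets} along the remaining portion of the chain yields $\Rowtwo{u_{j+1}} \subseteq \Rowtwo{w}$, and since any superset of a crowded set is itself crowded (immediate from Definition~\ref{defn:balanced}), the set $\Rowtwo{w}$ is crowded and hence $w$ is crowded, completing the contrapositive. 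The only step that requires real care is the support bookkeeping in the third observation, which is what guarantees that Theorem~\ref{thm:adding length and box but not support to uncrowded makes crowded} actually applies at the critical step; the rest is a straightforward assembly of previously established results.
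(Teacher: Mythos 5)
Your proof is correct and fills in exactly the argument the paper's structure implies (the paper states Corollary~\ref{cor.uncrowded} as an immediate consequence of Theorem~\ref{thm:adding length and box but not support to uncrowded makes crowded} without writing out a proof). The ``if'' direction via Proposition~\ref{cor:second row rules for balanced tableaux}, and the contrapositive via a saturated chain from $\core{w}$ to $w$ with the support bookkeeping ensuring $i_j \in \supp(u_j)$ at every step, followed by applying Theorem~\ref{thm:adding length and box but not support to uncrowded makes crowded} at the first tableau change and propagating crowdedness upward with Theorem~\ref{thm:growing fully commutative means row 2 subsets}, is precisely the intended argument.
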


\section{Minimal crowded permutations under the weak order} \label{sec:minimal crowded permutations}

Consider the poset of fully commutative (that is, $321$-avoiding) permutations in $S_n$ under the right weak order. The RSK partitions of such permutations have at most two rows, and we saw in Theorem~\ref{thm:growing fully commutative means row 2 subsets} that the content of their second rows obeys a subset relation along covering relations in the weak order. 
We also saw, in 
Proposition~\ref{cor:second row rules for balanced tableaux}, 
that a 2-row tableau is an insertion tableau for a boolean permutation if and only if it is an uncrowded tableau.

Recall that a fully commutative permutation $w$ is called ``uncrowded'' if its insertion tableau is an uncrowded tableau.
Otherwise a permutation is ``crowded.'' Set
\begin{align*}
    \balanced_n &:= \{w \in S_n \mid w \text{ is fully commutative and uncrowded}\}, \text{ and}\\
    \unbalanced_n &:= \{w \in S_n \mid w \text{ is fully commutative and crowded}\}.
\end{align*}

The subset relation in Theorem~\ref{thm:growing fully commutative means row 2 subsets} allows us to conveniently partition the fully commutative elements into  two sets: uncrowded and crowded permutations.

\begin{lemma}\label{lem:balanced order ideal NEW}
Consider the fully commutative elements of $S_n$, partially ordered according to the right weak order. The uncrowded permutations form an order ideal of this poset, and the crowded permutations form a dual order ideal of this poset.
\end{lemma}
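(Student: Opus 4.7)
The plan is to prove the second half of the statement (crowded permutations form a dual order ideal) and then deduce the first half by taking complements in the set of fully commutative permutations. Since a dual order ideal is determined by its behavior along covering relations, it suffices to show that if $v$ and $w$ are fully commutative with $v \lessdot w$ a cover in the right weak order and $v$ is crowded, then $w$ is crowded as well. (Note that the set of fully commutative permutations is itself a down-set in the right weak order on $S_n$, since prefixes of reduced words remain $321$-avoiding by Proposition~\ref{prop:tfae fully commutative}, so the covers in the induced subposet are exactly the covers $v \lessdot vs_i$ of $S_n$ with both endpoints fully commutative.)

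The key observation is that crowdedness, as defined in Definition~\ref{defn:balanced}, is monotone under inclusion of subsets of $\mathbb{Z}$: if $L \subseteq L'$ and $L$ contains some interval $[y, y+2x]$ with $|[y, y+2x] \cap L| > x+1$, then the same interval witnesses $|[y, y+2x] \cap L'| > x+1$, so $L'$ is crowded as well. Therefore, given a cover $v \lessdot w = vs_i$, Theorem~\ref{thm:growing fully commutative means row 2 subsets} yields $\Rowtwo{v} \subseteq \Rowtwo{w}$, and applying the monotonicity observation to $L = \Rowtwo{v}$ and $L' = \Rowtwo{w}$ shows that $v$ crowded forces $w$ crowded. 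Iterating along any chain of covers extends this implication to arbitrary comparisons $v \le w$ in the right weak order, so $\unbalanced_n$ is closed upward, i.e., a dual order ideal.

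Taking contrapositives gives that $\balanced_n$ is closed downward, hence an order ideal. There is no serious obstacle to overcome here: the entire argument rests on Theorem~\ref{thm:growing fully commutative means row 2 subsets} together with the elementary fact that crowdedness is a monotone property of the underlying set of second-row entries. In particular, the more delicate Theorem~\ref{thm:adding length and box but not support to uncrowded makes crowded} is not required for this lemma, although it is essential for the finer characterizations of minimal crowded elements appearing in Section~\ref{sec:minimal crowded permutations}.
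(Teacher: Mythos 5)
Your proof is correct and follows essentially the same route as the paper: the paper's own one-line proof cites Theorem~\ref{thm:growing fully commutative means row 2 subsets} (for $\Rowtwo{v}\subseteq\Rowtwo{w}$ along covers) together with Proposition~\ref{cor:second row rules for balanced tableaux}, while you instead invoke the definition of crowded directly to get the monotonicity under set containment, which is the same elementary observation made explicit. Your side remark that the fully commutative permutations form a down-set in the right weak order (so covers in the induced subposet are covers in $S_n$) is a sound and useful technical point that the paper leaves implicit.
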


\begin{proof}
This follows from Theorem~\ref{thm:growing fully commutative means row 2 subsets} and Proposition~\ref{cor:second row rules for balanced tableaux}.
\end{proof}

Thus we can identify this partition of the fully commutative permutations in $S_n$ by characterizing the maximal elements of the set
$\balanced_n$
or, equivalently, the minimal elements of the set
$\unbalanced_n.$
The minimal elements of this latter set satisfy a pattern containment condition.
Before we state and prove that property, consider what it means for $w$ to be a minimal element of $\unbalanced_n$: the fully commutative permutation $w$ is crowded, while every fully commutative permutation $ws_i$ that it covers is uncrowded. 

For the remainder of this section, we will assume that
$$\framebox{ $w$ is fully commutative; i.e., $w$ is $321$-avoiding. }$$
We begin by recalling a standard definition: an integer $d\in \{1, \dots , n-1\}$ is a \emph{descent} of 
$w\in S_n$ if $w(d) > w(d+1)$. 

\begin{lemma}\label{lem:non-bumping descent doesn't change tableau NEW}
Suppose that $d$ is a descent of $w$, and that $w(d+1)$ does not bump $w(d)$ during RSK insertion. Then $\P(w) = \P(w s_d)$.
In other words, if $\P(w) \neq \P(w s_d)$, then either $d$ is not a descent of $w$, or $w(d+1)$ bumps $w(d)$ during RSK insertion.
\end{lemma}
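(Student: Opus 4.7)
The plan is to reduce the claim to the contrapositive of Remark~\ref{rem:facts}(\ref{fact.2}). Set $v := ws_d$. Because $d$ is a descent of $w$, we have $\ell(v) = \ell(w) - 1$, so $w = vs_d$ with $\ell(w) = \ell(v) + 1$, placing $v$ and $w$ in precisely the setup of Theorem~\ref{thm:growing fully commutative means row 2 subsets}. Before invoking the remark, I would verify that $v$ is fully commutative: by Proposition~\ref{prop:tfae fully commutative}, $w$ being fully commutative is equivalent to no reduced word of $w$ containing $i(i+1)i$ as a factor. Any reduced word of $v$ extends, by appending $d$, to a reduced word of $w$, so a forbidden factor in a reduced word of $v$ would necessarily appear in one for $w$, a contradiction.

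Next, I would translate the hypothesis. Under the identification $v = ws_d$, we have $v(d) = w(d+1)$ and $v(d+1) = w(d)$, so the assumption that $w(d+1)$ does not bump $w(d)$ during the construction of $\P(w)$ is precisely the assertion that $v(d)$ does not bump $v(d+1)$ during the construction of $\P(w) = \P(vs_d)$.

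Finally, I would apply the contrapositive of Remark~\ref{rem:facts}(\ref{fact.2}): that remark asserts that whenever $\P(v) \neq \P(w)$, the value $v(d)$ must bump $v(d+1)$ in $\P(w)$. Since under our hypothesis $v(d)$ does not bump $v(d+1)$ in $\P(w)$, we are forced to conclude $\P(v) = \P(w)$, i.e., $\P(ws_d) = \P(w)$, which is the desired conclusion. The second sentence of the lemma is the verbatim contrapositive of the first.

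No step here looks like a genuine obstacle, since the real work has already been carried out in the case analysis inside the proof of Theorem~\ref{thm:growing fully commutative means row 2 subsets} and packaged into Remark~\ref{rem:facts}. The only item requiring any care is confirming that $v = ws_d$ inherits full commutativity from $w$, and that is a routine check using the reduced-word characterization in Proposition~\ref{prop:tfae fully commutative}.
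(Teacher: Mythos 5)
Your proof is correct, and it takes a genuinely different route from the paper's. The paper proves this lemma by a self-contained trace of the RSK algorithm: setting $\P' := \P_{d-1}(w)$, it observes that $w(d)$ is appended to $\Rowone(\P')$ without bumping, that $w(d+1)$ bumps some $z < w(d)$ which already sits in $\Rowone(\P')$, and then that inserting $w(d+1)$ followed by $w(d)$ (i.e., forming $\P_{d+1}(ws_d)$) produces exactly the same tableau $\P_{d+1}(w)$. Your argument instead outsources all of this RSK bookkeeping to the case analysis already performed in the proof of Theorem~\ref{thm:growing fully commutative means row 2 subsets}, as packaged in Remark~\ref{rem:facts}(\ref{fact.2}): after the translation $v = ws_d$, $v(d) = w(d+1)$, $v(d+1) = w(d)$, the hypothesis of the lemma is exactly the negation of the conclusion of that remark, so the contrapositive forces $\P(v) = \P(w)$. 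You are right that the only point needing care is that $v$ inherits full commutativity from $w$, and your argument via Proposition~\ref{prop:tfae fully commutative} handles this cleanly. In effect the paper reproves a special case of the case analysis underlying Theorem~\ref{thm:growing fully commutative means row 2 subsets}, whereas you reuse it; your version is shorter and makes the logical dependency explicit, at the modest cost of being less self-contained.
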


\begin{proof}
Set $v := w s_d$, and $\P' := \P_{d-1}(w) = \P_{d-1}(v)$. Because $w$ is $321$-avoiding and $d$ is a descent of $w$, the value $w(d)$ must be larger than everything to its left in the one-line notation of $w$. Thus, in forming $\P_d(w)$, this $w(d)$ gets appended to the end of the first row of $\P'$, without bumping anything. In forming $\P_{d+1}(w)$, the value $w(d+1)$, which is less than $w(d)$ because $d$ is a descent, will bump something. Let $z$ be the value that it bumps; i.e., $z$ is the smallest value in $\Rowone(\P_d(w))$ that is larger than $w(d+1)$. We know by assumption that $z \neq w(d)$. In particular, $z < w(d)$ and $z$ appears to the left of $w(d)$ in $w$. This last fact means that $z \in \Rowone(\P')$.

In forming $\P_d(v)$, the value $v(d) = w(d+1)$ bumps $z$ from the first row of $\P'$ to the second row. In forming $\P_{d+1}(v)$, the value $v(d+1) = w(d)$ is the largest value we have seen so far, so it gets appended to the end of the first row of $\P_d(v)$, without bumping anything. Therefore $\P_{d+1}(w) = \P_{d+1}(v)$, and hence $\P(w) = \P(v)$.
\end{proof}

Somewhat akin to Lemma~\ref{lem:non-bumping descent doesn't change tableau NEW}, we can make the following additional statement, which we phrase in terms of Knuth relations. 

\begin{definition}\label{defn:knuth relations NEW}
Two permutations $w$ and $v$ \emph{differ by one Knuth relation} if $w$ is the result of replacing a consecutive $312$-pattern in $v$ by a consecutive $132$-pattern (or vice versa), or replacing a consecutive $231$-pattern in $v$ by a consecutive $213$-pattern (or vice versa).
\end{definition}

\begin{lemma}\label{lem:small neighbor doesn't change tableau NEW}
Let $d$ be a descent of $w$. If $w(d+2) < w(d)$, then $\P(w) = \P(w s_d)$. 
\end{lemma}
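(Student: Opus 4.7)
The plan is to observe that the hypotheses force the triple $w(d),w(d+1),w(d+2)$ to be a consecutive $312$-pattern, so that $w$ and $ws_d$ differ by exactly one Knuth relation in the sense of Definition~\ref{defn:knuth relations NEW}, and then invoke the classical theorem of Knuth that Knuth-equivalent permutations share the same RSK insertion tableau.

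First I would pin down the order type of $w(d),w(d+1),w(d+2)$. Writing $a=w(d)$, $b=w(d+1)$, and $c=w(d+2)$, the descent hypothesis gives $a>b$ and the additional assumption gives $c<a$. Because $w$ is $321$-avoiding, we must have $b<c$, since otherwise $a,b,c$ would be a consecutive $321$-pattern in $w$. Hence $b<c<a$, so the triple at positions $d,d+1,d+2$ of $w$ is a consecutive $312$-pattern, while the corresponding triple in $ws_d$, namely $(b,a,c)$, is a consecutive $132$-pattern. By Definition~\ref{defn:knuth relations NEW}, $w$ and $ws_d$ differ by one Knuth relation, and Knuth's theorem then yields $\P(w)=\P(ws_d)$.

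If one prefers to avoid quoting Knuth's theorem, the equality can instead be verified directly by running the RSK algorithm through positions $d$, $d+1$, $d+2$. Let $\P'=\P_{d-1}(w)=\P_{d-1}(ws_d)$. The descent together with $321$-avoidance forces $a$ to exceed every entry of $\P'$ (any larger value to its left together with $a$ and $b$ would form a $321$-pattern), so the subsequent insertions are easy to track. A short case split, according to whether the first row of $\P'$ contains a value strictly between $b$ and $a$, shows that after inserting $a,b,c$ (as occurs in $w$) and after inserting $b,a,c$ (as occurs in $ws_d$), the resulting partial tableaux $\P_{d+2}(w)$ and $\P_{d+2}(ws_d)$ coincide: in the first case both insertion sequences bump the same row-$1$ value into row $2$; in the second case $w$'s insertion ends with $a$ in row $2$ while $ws_d$'s insertion has $a$ bumped down by $c$, again landing $a$ in row $2$. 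Since $w$ and $ws_d$ agree from position $d+3$ onward, the equality propagates to the final tableaux.

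The main (modest) obstacle is the bookkeeping in the direct approach, namely keeping track of where $b$ lands relative to $a$ under the two insertion orders. The Knuth-equivalence route bypasses this entirely and is the most natural strategy given the terminology just introduced in Definition~\ref{defn:knuth relations NEW}.
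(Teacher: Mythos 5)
Your primary argument — recognizing $w(d)w(d+1)w(d+2)$ as a consecutive $312$-pattern forced by $321$-avoidance and then invoking Knuth's theorem — is exactly the paper's proof. The additional direct RSK verification you sketch is a correct but unnecessary alternative.
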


\begin{proof}
The permutation $w$ is $321$-avoiding, so $w(d)w(d+1)w(d+2)$ must be a $312$-pattern. Knuth's theorem~\cite{Knuth70} says that the insertion tableau is preserved under a Knuth relation, so $\P(w) = \P(w s_d)$.
\end{proof}

We now return to the characterization motivated by Lemma~\ref{lem:balanced order ideal NEW}: identification of the minimal elements of $\crowded_n$ in the poset of fully commutative permutations of $S_n$.

\subsection{Consequences of minimality in $\crowded_n$}\label{sec:consequences of minimality}

As it turns out, knowing that a permutation is minimal in the dual order ideal $\crowded_n$ 
imposes substantial structure on the permutation.
In this subsection, we will collect many of these consequences of minimality, with the ultimate goal of proving a characterization of minimality in Section~\ref{sec:characterization of minimality}.

Throughout this subsection we will consider permutations that are minimal elements of the dual order ideal $\crowded_n$, and we will identify features of the permutations that follow from that property. 

We begin with an immediate corollary of Lemma~\ref{lem:non-bumping descent doesn't change tableau NEW}.

\begin{corollary}\label{cor:bump must be adjacent NEW}
Let $w$ be a minimal crowded permutation. 
\begin{enumerate}[\hspace{.5cm}(a)]
\item Then $d$ is a descent of $w$ if and only if $w(d+1)$ bumps $w(d)$ to the second row during RSK insertion. 
\label{item:adjacent bump}
\item 
Furthermore, every $w(d) \in \RowTwo(\P(w))$ is bumped by $w(d+1)$.
\label{item:bumps from adjacent}
\end{enumerate}
\end{corollary}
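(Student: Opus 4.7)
My plan for part~(a) is to handle the two directions separately. The backward direction is immediate from Lemma~\ref{lem: rsk_basics}: if $w(d+1)$ bumps $w(d)$, then $w(d+1) < w(d)$, so $d$ is a descent. For the forward direction, I will assume $d$ is a descent and exploit minimality. First I will check that $ws_d$ remains fully commutative---either by a short case analysis showing $ws_d$ is still 321-avoiding, or by deleting the final $s_d$ from a reduced word of $w$ ending in $s_d$ and invoking Proposition~\ref{prop:tfae fully commutative}. Since $\ell(ws_d) = \ell(w) - 1$, we have $ws_d < w$ in the right weak order, so minimality of $w$ in $\crowded_n$ forces $ws_d$ to be uncrowded. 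In particular $\P(ws_d) \neq \P(w)$, because $\P(w)$ is crowded, and the contrapositive of Lemma~\ref{lem:non-bumping descent doesn't change tableau NEW} then yields that $w(d+1)$ must bump $w(d)$.

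For part~(b), I will take $w(d) \in \Rowtwo{w}$ and let $w(j)$ be the value that bumps $w(d)$ during RSK, so $j > d$ and $w(j) < w(d)$; the plan is to show $j = d+1$ by contradiction. If $w(d+1) < w(d)$, then $d$ is a descent and part~(a) gives $j = d+1$ at once, so I may assume $w(d+1) > w(d)$. I will then choose $k > d$ to be the \emph{smallest} index with $w(k) < w(d)$; such a $k$ exists because $j$ witnesses one, and $k \geq d+2$ under our assumption. Since $w(k-1) > w(d) > w(k)$, the index $k-1$ is a descent, so part~(a) says that $w(k)$ must bump $w(k-1)$ during RSK.

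The crux---and the step I expect to be the main obstacle to write carefully---is to contradict this by tracking the actual RSK insertion. At the moment $w(k)$ is inserted, all the intervening values $w(d+1), \ldots, w(k-1)$ strictly exceed $w(d)$, so none of them can have bumped $w(d)$ out of the first row; thus $w(d)$ is still in row~1. The freshly placed $w(k-1)$ also sits in row~1. Since $w(k) < w(d) < w(k-1)$, the value $w(k)$ will bump the smallest entry of row~1 exceeding $w(k)$, which is at most $w(d)$ and hence strictly less than $w(k-1)$. This contradicts part~(a), forcing $j = d+1$ and finishing the corollary.
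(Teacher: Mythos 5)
Your proof is correct. Part~(a) matches the paper, which treats it as an immediate consequence of Lemma~\ref{lem:non-bumping descent doesn't change tableau NEW} together with minimality (via the same route you sketch: $ws_d < w$ is fully commutative, hence uncrowded, hence $\P(ws_d)\ne\P(w)$, and the contrapositive of the lemma forces the bump). Your aside about checking that $ws_d$ stays $321$-avoiding is fine but is a standard fact that can be cited rather than re-derived.

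For part~(b), your argument and the paper's share the same skeleton---assume $j > d+1$, produce a descent strictly between $d$ and $j$, apply part~(a) to that descent, and contradict---but the contradiction is extracted differently. The paper takes any descent $d'\in[d+1,j-1]$, gets $w(d'+1)$ bumping $w(d')$ from part~(a), and then invokes the bumper-ordering result, Lemma~\ref{lem:b1<b2<... and b1,b2,... apppear from left to right}(\ref{bump_left_to_right}), to see that $w(j)$ appearing to the right of $w(d'+1)$ while bumping a smaller second-row value is impossible. You instead pick the \emph{first} position $k>d$ where the value drops below $w(d)$, note $k-1$ is a descent, and track the insertion directly: at the moment $w(k)$ is inserted, both $w(d)$ and $w(k-1)$ are in row~1 with $w(k)<w(d)<w(k-1)$, so $w(k)$ cannot bump $w(k-1)$, contradicting part~(a). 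Your route is more elementary in that it avoids Lemma~\ref{lem:b1<b2<... and b1,b2,... apppear from left to right}(\ref{bump_left_to_right}), at the cost of the explicit RSK bookkeeping; the paper's route is shorter once that lemma is available. Both are valid.
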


\begin{proof}
It remains to prove Part~(\ref{item:bumps from adjacent}). 
Suppose $w(d)$ is an element of $\RowTwo(\P(w))$, bumped by $w(j)$ with $j > d+1$. 
Part~(\ref{item:adjacent bump}) tells us that $d$ is not a descent of $w$. 
Because $w(d) > w(j)$, there exists a descent $d' \in [d+1,j-1]$. Lemma~\ref{lem:non-bumping descent doesn't change tableau NEW} implies that $w(d'+1)$ bumps $w(d')$ during RSK insertion. 
The values $w(j)$ and $w(d'+1)$ cannot be equal, so $d'$ is in fact in $[d+1,j-2]$.
However, the fact that $w(j) w(d'+1)$ is not a subsequence of $w$  violates Lemma~\ref{lem:b1<b2<... and b1,b2,... apppear from left to right}(\ref{bump_left_to_right}), and so in fact we must have $j = d+1$.
\end{proof}

Lemma~\ref{lem:small neighbor doesn't change tableau NEW} and  Corollary~\ref{cor:bump must be adjacent NEW} impose rules on the values that are unaffected by bumping during RSK insertion.

\begin{corollary}\label{cor:fixed points in minimal crowded NEW}
Let $w$ be a minimal crowded permutation, with first descent $d$ and last descent $d'$. Then the permutation $w$ fixes all $i \in [1,d-1] \cup [d'+2,n]$.
\end{corollary}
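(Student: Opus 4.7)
The plan is to handle the two claimed fixed-point ranges $[1,d-1]$ and $[d'+2,n]$ separately using two almost mirror-image counting arguments. Both claims are vacuous in the edge cases $d=1$ and $d'=n-1$, so I will focus on the generic situations $d\ge 2$ and $d'\le n-2$; in each case the strategy is to combine $321$-avoidance with what minimality of $w$ in $\crowded_n$ forces at the extreme descent.

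For the first range, I start from the fact that $w(1)<w(2)<\cdots <w(d)>w(d+1)$, so the insertions of $w(1),\ldots ,w(d)$ simply append to the first row and $\Rowone(\P_d(w))=\{w(1),\ldots ,w(d)\}$. Corollary~\ref{cor:bump must be adjacent NEW}(a) then tells me that $w(d+1)$ must bump $w(d)$, which, given the row structure at that moment, forces $w(d-1)<w(d+1)<w(d)$. Next $321$-avoidance provides the key restriction: every value $v<w(d+1)$ sits at some position in $[1,d-1]$, because placing $v$ at any $j\ge d+2$ would produce a $321$-pattern $w(d)\,w(d+1)\,v$, and positions $d$ and $d+1$ hold values $\ge w(d+1)$. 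This inclusion yields $w(d+1)-1\le d-1$; in the opposite direction, $w(1)<\cdots <w(d-1)<w(d+1)$ gives $d-1$ distinct values all strictly below $w(d+1)$, so $d-1\le w(d+1)-1$. Combining, $w(d+1)=d$, and then $w(1),\ldots ,w(d-1)$ must be exactly $1,2,\ldots ,d-1$ in increasing order.

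For the second range I first want to upgrade Lemma~\ref{lem:small neighbor doesn't change tableau NEW} into a monotonicity statement: if $w(d'+2)<w(d')$ then $\P(ws_{d'})=\P(w)$ is crowded and has a two-row shape, so $ws_{d'}$ is a shorter, fully commutative, crowded element below $w$, contradicting minimality of $w$ in $\crowded_n$. Hence $w(d'+2)>w(d')$, and since $d'$ is the last descent the sequence $w(d'+2)<w(d'+3)<\cdots <w(n)$ consists of $n-d'-1$ distinct values all exceeding $w(d')$. The mirror $321$-argument now rules out every value $v>w(d')$ at a position $j\le d'-1$, since otherwise $v\,w(d')\,w(d'+1)$ would be a $321$-pattern, and positions $d',d'+1$ hold values $\le w(d')$. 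Counting in both directions gives $n-w(d')\le n-d'-1$ and $n-d'-1\le n-w(d')$, forcing $w(d')=d'+1$, and then the values at positions $d'+2,\ldots ,n$ are exactly $d'+2,\ldots ,n$ in increasing order.

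The main obstacle is bookkeeping rather than conceptual. Minimality is invoked through two slightly different routes in the two halves (Corollary~\ref{cor:bump must be adjacent NEW}(a) for the first half and Lemma~\ref{lem:small neighbor doesn't change tableau NEW} for the second), and I must be careful in the $321$-arguments to explicitly exclude the middle positions $d,d+1$ (respectively $d',d'+1$) from the count before invoking pigeonhole, since the relevant extreme values already live there.
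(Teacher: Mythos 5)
Your proof is correct, and it reaches the conclusion by a cleaner, more direct route than the paper's. The paper argues by contradiction: it supposes some $i\in[1,d-1]$ (resp.\ $i\in[d'+2,n]$) is not fixed, chooses the extremal such $i$, locates its position, and then derives a contradiction via a second application of Corollary~\ref{cor:bump must be adjacent NEW} (first half) or Lemma~\ref{lem:small neighbor doesn't change tableau NEW} (second half). You instead pin down the exact values $w(d+1)=d$ and $w(d')=d'+1$ by a pigeonhole count, using the same two ingredients but only once each: Corollary~\ref{cor:bump must be adjacent NEW}(a) to force $w(d-1)<w(d+1)<w(d)$ at the first descent, and Lemma~\ref{lem:small neighbor doesn't change tableau NEW} plus minimality to force $w(d'+2)>w(d')$ at the last descent, after which $321$-avoidance and a counting argument in each direction nail the boundary value, from which the fixed-point claim follows immediately. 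Your approach buys a slightly shorter argument and, as a bonus, extracts the additional structural facts $w(d+1)=d$ and $w(d')=d'+1$ explicitly rather than implicitly. One minor remark: your second half appeals to $\P(ws_{d'})$ having two-row shape to conclude $ws_{d'}$ is fully commutative; this is fine but it is cleaner to note that $ws_{d'}$ differs from $w$ by a Knuth move on the $312$-pattern at positions $d',d'+1,d'+2$, so $321$-avoidance is preserved directly, and the two permutations share all remaining positions.
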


\begin{proof}
Suppose, first, that some $i < d$ is not fixed by $w$. Let $i$ be minimal with this property, and let $j$ be such that $w(j) = i$. Minimality of $i$ means that $j>i$, and that $j-1$ is a descent of $w$. By Corollary~\ref{cor:bump must be adjacent NEW}, the value $i$ must bump $w(j-1)$ to the second row during RSK insertion. Moreover, this minimality means that $w(d+1) \ge i$. To avoid $w(d) w(d+1) i$ forming a $321$-pattern in $w$, we must have that $w(d+1) = i$. Minimality of $i$ and the fact that $d$ is the first descent mean that $w(d+1) = i < w(i)< w(i+1)< \cdots <w(d)$, and so $i$ will actually bump $w(i)$ during RSK insertion, contradicting the assumption that $i<d$ and Corollary~\ref{cor:bump must be adjacent NEW}.

Now suppose that some $i > d'+1$ is not fixed by $w$. Let $i$ be maximal with this property, and let $j$ be such that $w(j) = i$. Maximality of $i$ means that $j < i$, and that $j$ is a descent of $w$. And, by Corollary~\ref{cor:bump must be adjacent NEW}, this $i$ must be bumped by $w(j+1)$ during RSK insertion. To avoid $w(j)w(d')w(d'+1)$ forming a $321$-pattern in $w$, we must have that $j = d'$.  Moreover, maximality of $i > d'$ means that $w(j+2) < i$, and so Lemma~\ref{lem:small neighbor doesn't change tableau NEW} contradicts the minimality of $w$.
\end{proof}

At this point, we have established several properties about the one-line representation of minimal elements of $\crowded_n$. In fact, we can go even further, showing that values
in the interval 
$[d,d'+1]$, in the language of Corollary~\ref{cor:fixed points in minimal crowded NEW} must be, in a sense, interwoven.

For the remainder of this subsection, define:
\begin{center}
\framebox{
\begin{minipage}{4.75in}
\vspace{.1in}
 $\RowTwo(\P(w)) = \{z_1 < \cdots < z_t\}$, and $b_i$ is the value that bumps $z_i$ to $\RowTwo(\P(w))$ during the construction of $\P(w)$, for each $i = 1, \ldots, t$.
 \vspace{.05in}
\end{minipage}}
\end{center}
We will also want to be able to refer to ``minimally crowded sets,'' and so for positive integers $x$ and $y$, we will write 
$$\framebox{ $S_{x,y} := \begin{cases}
\{y,y+1,y+2\} & \text{ if $x = 1$, and}\\
\{y, y+1, y+3, y+5, \ldots, y+2x-1, y+2x\} & \text{ if $x > 1$.}
\end{cases}$ }$$

\begin{lemma}\label{lem:long_subsequence NEW}
Let $w$ be 
minimal in $\crowded_n$. 
 Then
\[
z_1b_1z_2b_2\cdots z_tb_t 
\]
is a consecutive subsequence of the one-line notation for $w$.
\end{lemma}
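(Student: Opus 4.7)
The plan is to argue by contradiction. By Corollary~\ref{cor:bump must be adjacent NEW} the descents of $w$ are exactly $p_1<\cdots<p_t$ (where $p_i$ denotes the position of $z_i$ in the one-line notation of $w$), and each $z_i$ is immediately followed at position $p_i+1$ by $b_i$, so the claim reduces to proving $p_{i+1}=p_i+2$ for every $1\le i<t$. I will assume instead that $p_{i+1}>p_i+2$ for some $i$; since every position strictly between $p_i$ and $p_{i+1}$ is then an ascent of $w$, this produces a strictly increasing run
\[
b_i=w(p_i+1)<w(p_i+2)<\cdots<w(p_{i+1})=z_{i+1},
\]
and I set $q:=w(p_i+2)$, so $b_i<q<z_{i+1}$.

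The first step is to show $q>z_i$. If instead $q<z_i$, then $p_i$ is a descent of $w$ with $w(p_i+2)<w(p_i)$, so Lemma~\ref{lem:small neighbor doesn't change tableau NEW} forces $\P(w)=\P(ws_{p_i})$. Then $ws_{p_i}$ shares the (crowded) insertion tableau of $w$ and so is itself crowded; its insertion tableau has at most two rows, so $ws_{p_i}$ is fully commutative by Theorem~\ref{thm:Greene's theorem}; and $ws_{p_i}$ lies strictly below $w$ in the right weak order because swapping at a descent reduces the length by one. This would contradict the minimality of $w$ in $\crowded_n$, so $z_i<q<z_{i+1}$.

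The main step is a longest-increasing-subsequence count. By Lemma~\ref{lem:row 2 of P(w) appears in increasing order in w}(a) the values $z_1<z_2<\cdots<z_i$ occur at positions $p_1<p_2<\cdots<p_i$ in that order, all preceding position $p_i+2<p_{i+1}$, so
\[
z_1,\,z_2,\,\dots,\,z_i,\,q,\,z_{i+1}
\]
is an increasing subsequence of $w$ of length $i+2$ terminating at $z_{i+1}$. On the other hand, since the second row of $\P(w)$ reads $z_1<\cdots<z_t$ across columns $1,\dots,t$, the value $z_{i+1}$ was bumped into column $i+1$, giving $c_w(z_{i+1})=i+1$, and Lemma~\ref{lem: c equals LIS} then yields $\textup{LIS}_w(z_{i+1})=i+1$. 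This contradicts the length-$(i+2)$ increasing subsequence just produced, completing the argument. The only subtle point is securing the inequality $q>z_i$; once that is in hand, everything else follows immediately from the Schensted column/LIS identity applied to the natural witness subsequence.
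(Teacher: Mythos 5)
Your reduction to showing $p_{i+1}=p_i+2$ and the argument that $q>z_i$ (via Lemma~\ref{lem:small neighbor doesn't change tableau NEW}) mirror the opening moves of the paper's proof, but the closing step contains a genuine error. The quantity $c_w(z_{i+1})$ is, by definition, the column of the \emph{first} row into which $z_{i+1}$ lands when it is first inserted, not the column it later occupies in the second row of $\P(w)$. Those two numbers need not agree: by Lemma~\ref{lem:row 2 of P(w) appears in increasing order in w}(\ref{bumper_cannot_be_bumped}) the value $z_{i+1}$ never bumps anything, so it is appended to the end of the first row; at that moment $z_1,\dots,z_i$ are already in the second row (since by Corollary~\ref{cor:bump must be adjacent NEW} each is bumped immediately), so the first row has $p_{i+1}-1-i$ entries, giving $c_w(z_{i+1})=p_{i+1}-i$. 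When it is later bumped, $z_{i+1}$ appends to the second row at column $i+1$, which only yields the inequality $c_w(z_{i+1})\geq i+1$ (the bumping lemma says columns can only decrease down a row). Under your contradiction hypothesis $p_{i+1}>p_i+2$ (together with $p_j\ge p_{j-1}+2$ for all $j$, which holds since $z_j\neq b_{j-1}$), one in fact has $p_{i+1}-i\geq i+2$, so $\textup{LIS}_w(z_{i+1})=c_w(z_{i+1})\geq i+2$. Your explicitly constructed increasing subsequence $z_1,\dots,z_i,q,z_{i+1}$ of length $i+2$ is therefore entirely consistent with Lemma~\ref{lem: c equals LIS}, and no contradiction is produced. (The claim $c_w(z_{i+1})=i+1$ already fails for minimal crowded $w$ whose first descent $d$ exceeds $1$: there $c_w(z_1)=d>1$ even though $z_1$ sits in column $1$ of the second row.)

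To close the gap, you would need an argument that actually uses crowdedness. After establishing $z_i<q<z_{i+1}$, the paper notes that $q$ remains in the first row while $b_{i+1}$ bumps $z_{i+1}$, so $q<b_{i+1}<z_{i+1}$ and hence $z_{i+1}\geq z_i+3$; a minimal crowded subset $S_{x,y}\subseteq\Rowtwo{w}$ then cannot contain both $z_i$ and $z_{i+1}$, and removing the superfluous one of them (by one swap $ws_j$) produces a strictly smaller element of $\crowded_n$, contradicting minimality. That is a fundamentally different mechanism from the LIS count you attempt, and it is needed precisely because the LIS count, as you observed, only yields a lower bound that your hypothesis already satisfies.
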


\begin{proof}
By Corollary~\ref{cor:bump must be adjacent NEW}, each $z_ib_i$ is a consecutive subsequence. 
From Lemmas~\ref{lem: rsk_basics} and~\ref{lem:row 2 of P(w) appears in increasing order in w}, we know that $z_1z_2z_3\cdots z_t$  
and 
 $b_1b_2b_3\cdots b_t$,
are subsequences of the one-line notation for $w$.
We next prove that $b_j z_{j+1}$ is a subsequence of $w$ for all $1\leq j < t$.

Suppose, for the sake of contradiction, there is some $j$ such that $b_j$ appears to the right of $z_{j+1}$, as in
$$w = \cdots z_j \cdots z_{j+1} \cdots b_j \cdots b_{j+1}\cdots .$$
But $z_{j+1} > z_j > b_j$ by Lemma~\ref{lem: rsk_basics}, hence there exists a descent $d$ such that $w(d)$ occurs at or after $z_{j+1}$, and before $b_j$. 
By Corollary~\ref{cor:bump must be adjacent NEW}, this is impossible. 
Therefore, for all $1\le j<t$, we must have $b_j$ appearing to the left of $z_{j+1}$ in the one-line notation for $w$.

We now prove that this subsequence is consecutive. Suppose that some value $q \neq z_{i+1}$ follows $b_i$. To avoid $321$-patterns, we must have that $q < z_{i+1}$. If $q < z_i$, then Lemma~\ref{lem:small neighbor doesn't change tableau NEW} would produce a contradiction with the fact that $w$ is minimal in $\crowded_n$. Thus it remains only to consider when $q > z_i$. 

Because $q$ is necessarily in $\Rowone(\P(w))$ and $b_{i+1}$ bumps $z_{i+1}$, we have $q < b_{i+1} < z_{i+1}$. We have assumed $q > z_i$, so, in fact, $z_{i+1} \ge z_i+3$. The set $\RowTwo(\P(w)) = \{z_1 < z_2 < \cdots < z_t\}$ is crowded, so there exist positive integers $x$ and $y$ such that $S_{x,y}\subseteq \RowTwo(\P(w))$. Since $z_{i+1} \ge z_i + 3$, the values $z_i$ and $z_{i+1}$ cannot both be in $S_{x,y}$. Define $j$ and $j'$ so that $w(j) = z_i$ and $w(j') = z_{i+1}$, and there are two options.

\begin{itemize}
\item  
If $y+2x < z_{i+1}$, set $\tilde{w} := w s_{j'}$: 
\[
\tilde{w} = \cdots z_ib_i q \cdots b_{i+1}z_{i+1}\cdots \,.
\]
Thus $S_{x,y} \subseteq \{z_1<\cdots <z_i\} \subseteq \Rowtwo{\tilde{w}}$. Therefore $\tilde{w} < w$, and $\tilde{w}$  is crowded, contradicting the minimality of $w$.

\item
If $z_i < y$, then construct $\tilde{w} := w s_j$:
\[
\tilde{w} = \cdots b_iz_i q \cdots z_{i+1}b_{i+1}\cdots \,.
\]
Then $S_{x,y} \subseteq \{z_{i+1}< \dots < z_t\} \subseteq \Rowtwo{\tilde{w}}$. Therefore $\tilde{w} < w$ and $\tilde{w}$  is crowded, again contradicting the minimality of $w$.
\end{itemize}
Thus there can be no such $q$, and the subsequence $z_1b_1z_2b_2\cdots z_tb_t$ is consecutive in $w$.  
\end{proof}

Recall from Section~\ref{subsection.boolean.rsk} that a fully commutative permutation $w$ is crowded if and only if $\Rowtwo{w}$ is a crowded set.  This means that there exist positive integers $x$ and $y$ such that 
$$|[y,y+2x] \cap \Rowtwo{w}| > x+1.$$
In fact, 
we can choose $x$ and $y$ so that
$$S_{x,y}:= [y,y+2x] \cap \Rowtwo{w}.$$
Note that the set $S_{x,y}$ contains at least three elements.

\begin{lemma}\label{lem:c_3 < y_1 NEW}
Let $w$ be a crowded 
permutation, and let $S_{x,y} \subseteq \Rowtwo{w}$ be as described above. The value that bumps the third smallest element of $S_{x,y}$ during RSK insertion is less than $y$. 
\end{lemma}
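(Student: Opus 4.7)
The plan is to set $c_3$ to be the third smallest element of $S_{x,y}$ and $b$ to be the value that bumps $c_3$ in $\P(w)$, and then to eliminate every candidate value for $b$ in the interval $[y, c_3 - 1]$. Inspecting the two cases of the definition gives $c_3 = y + 2$ when $x = 1$ and $c_3 = y + 3$ when $x \geq 2$. By Lemma~\ref{lem: rsk_basics}, $b < c_3$; by Lemma~\ref{lem:row 2 of P(w) appears in increasing order in w}(\ref{bumper_cannot_be_bumped}), $b \notin \Rowtwo{w}$; and since $y, y+1 \in S_{x,y} \subseteq \Rowtwo{w}$, this immediately rules out $b \in \{y, y+1\}$. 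For $x = 1$ this already forces $b < y$, so the real work is in the case $x \geq 2$, where I must further exclude $b = y + 2$.

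Assume, for contradiction, $b = y + 2$. I would label the elements of $S_{x,y}$ in increasing order as $z_{i_1} < \cdots < z_{i_{x+2}}$, so that $z_{i_j} = y + 2j - 3$ for $3 \leq j \leq x+1$ and $z_{i_{x+2}} = y + 2x$, and let $b_{i_j}$ be the value that bumps $z_{i_j}$. Lemma~\ref{lem:row 2 of P(w) appears in increasing order in w}(\ref{bump_left_to_right}) gives $b_{i_3} < b_{i_4} < \cdots < b_{i_{x+2}}$, and each $b_{i_j}$ is strictly less than $z_{i_j}$ and lies outside $\Rowtwo{w}$.

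I claim inductively, starting from the assumed equality $b_{i_3} = y + 2$, that $b_{i_j} = y + 2j - 4$ for every $3 \leq j \leq x+1$. For the inductive step, $b_{i_j}$ must be an integer in the open interval $(b_{i_{j-1}}, z_{i_j}) = (y + 2j - 6, y + 2j - 3)$; the only options are $y + 2j - 5$ and $y + 2j - 4$. The former equals $y + 2(j-2) - 1$, an odd-offset element of $S_{x,y}$, hence in $\Rowtwo{w}$; the latter equals $y + 2(j-2)$, an even-offset value inside $[y, y+2x]$ but not in $S_{x,y}$, hence (using $S_{x,y} = [y, y+2x] \cap \Rowtwo{w}$) outside $\Rowtwo{w}$. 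So $b_{i_j} = y + 2j - 4$. One final application at $j = x+2$ forces $b_{i_{x+2}}$ into the one-integer interval $(y + 2x - 2, y + 2x) = \{y + 2x - 1\}$, and $y + 2x - 1 \in S_{x,y} \subseteq \Rowtwo{w}$. This is the contradiction, so $b \neq y+2$ and hence $b < y$.

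The main obstacle I anticipate is the careful bookkeeping at each stage of the induction---specifically, checking that between consecutive $b$-values the only integer outside $\Rowtwo{w}$ has even offset from $y$---which depends on the explicit pattern of $S_{x,y}$. Once this is verified, the entire argument reduces to a pigeonhole cascade that eventually leaves the last bumper with no place to land.
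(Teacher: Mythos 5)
Your proof is correct and follows essentially the same route as the paper's: rule out $b\in\{y,y+1\}$ using Lemma~\ref{lem:row 2 of P(w) appears in increasing order in w}(\ref{bumper_cannot_be_bumped}), then (for $x>1$) suppose $b=y+2$ and use the increasing-bumpers property of Lemma~\ref{lem:row 2 of P(w) appears in increasing order in w}(\ref{bump_left_to_right}) to force the bumper of $y+2k+1$ to be $y+2k$ for each $k$, culminating in the impossibility of finding a bumper for $y+2x$. Your write-up makes the inductive step and the role of the hypothesis $S_{x,y}=[y,y+2x]\cap\Rowtwo{w}$ more explicit than the paper does, but the underlying argument is the same.
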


\begin{proof}
Let $c$ be the value that bumps the third smallest element of $S_{x,y}$. By Lemma~\ref{lem:row 2 of P(w) appears in increasing order in w}(\ref{bumper_cannot_be_bumped}), we know $c \notin \RowTwo(\P(w))$. We will prove our result in two cases: $x=1$ and $x>1$. First consider $x=1$. Since $S_{1,y} \subseteq \RowTwo(\P(w))$ and $c<y+2$, we see that $c<y$.  

Now consider the case $x>1$, so 
$c=y+2$. By Lemma~\ref{lem:row 2 of P(w) appears in increasing order in w}, this implies $y+2k$ will bump $y+2k+1$ for all $1\leq k\leq x-1$. However, if $y+2x-2$ bumps $y+2x-1$, there is no value in $\Rowone{(\P(w))}$ both smaller than $y+2x$ and larger than $y+2x-2$ that could have bumped $y+2x$, and yet $S_{x,y} \subseteq \RowTwo(\P(w))$. Thus $c< y$.
\end{proof}

\begin{corollary}
\label{cor:unbalanced minima have 415263 NEW}
Let $w$ be a minimal element of $\unbalanced_n$. 
Then $w$ contains a consecutive occurrence of the pattern $415263$. 
Moreover, 
$w$ has an occurrence $w(i)\cdots w(i+5)$ of the pattern $415263$ in which $$\{w(i),\ldots,w(i+5)\} \cap \Rowtwo{w} = \{w(i),w(i+2),w(i+4)\}.$$
\end{corollary}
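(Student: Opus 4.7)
The plan is to exhibit the required occurrence directly by combining Lemma~\ref{lem:long_subsequence NEW} (consecutive $z,b$ subsequence) with Lemma~\ref{lem:c_3 < y_1 NEW} (the third bumper is small). Since $w\in\crowded_n$, I can choose $x,y$ with $S_{x,y}=[y,y+2x]\cap\Rowtwo{w}$, as described before Lemma~\ref{lem:c_3 < y_1 NEW}. Let $y_1<y_2<y_3$ denote the three smallest elements of $S_{x,y}$; explicitly, $(y_1,y_2,y_3) = (y,y+1,y+2)$ when $x=1$ and $(y,y+1,y+3)$ when $x>1$.

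First I would verify that $y_1,y_2,y_3$ are three consecutive entries of the sorted list $\Rowtwo{w}=\{z_1<\cdots<z_t\}$, so that $y_1=z_j$, $y_2=z_{j+1}$, $y_3=z_{j+2}$ for some $j$. Any element of $\Rowtwo{w}$ strictly between $y_1$ and $y_3$ would lie in $[y,y+2x]$ and hence in $S_{x,y}$, but in both cases above the only element of $S_{x,y}$ strictly between $y_1$ and $y_3$ is $y_2$.

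Next, Lemma~\ref{lem:long_subsequence NEW} gives that $z_j b_j z_{j+1} b_{j+1} z_{j+2} b_{j+2}$ is a \emph{consecutive} subsequence of the one-line notation of $w$. Lemma~\ref{lem:row 2 of P(w) appears in increasing order in w} yields $b_j<b_{j+1}<b_{j+2}$ together with $b_k<z_k$ for each $k\in\{j,j+1,j+2\}$, and Lemma~\ref{lem:c_3 < y_1 NEW} supplies $b_{j+2}<y=z_j$. Chaining these inequalities gives
\[
b_j<b_{j+1}<b_{j+2}<z_j<z_{j+1}<z_{j+2},
\]
so the six-letter block realizes the pattern $415263$. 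Taking $i$ to be the position of $z_j$ in $w$, the intersection claim $\{w(i),\ldots,w(i+5)\}\cap\Rowtwo{w}=\{w(i),w(i+2),w(i+4)\}$ then follows because no $b_k$ lies in $\Rowtwo{w}$, by Lemma~\ref{lem:row 2 of P(w) appears in increasing order in w}(\ref{bumper_cannot_be_bumped}).

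The only genuine (and mild) obstacle is the consecutivity check for $y_1,y_2,y_3$ in the sorted order of $\Rowtwo{w}$: this is exactly what lets Lemma~\ref{lem:long_subsequence NEW} deliver a single block of six consecutive letters in $w$ rather than a possibly longer, interrupted subsequence. Once that is in hand, the pattern identification and the intersection statement are bookkeeping, and minimality of $w$ is not invoked beyond its use inside Lemma~\ref{lem:long_subsequence NEW}.
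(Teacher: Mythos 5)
Your proof is correct and follows essentially the same route as the paper: choose the crowded set $S_{x,y}$, invoke Lemma~\ref{lem:long_subsequence NEW} to get the consecutive block $z_j b_j z_{j+1} b_{j+1} z_{j+2} b_{j+2}$, and use Lemma~\ref{lem:c_3 < y_1 NEW} to get $b_{j+2} < z_j$, from which the $415263$ pattern and the intersection claim follow. The one place you go beyond the paper is in making explicit the check that $y_1,y_2,y_3$ occupy consecutive positions in the sorted $\Rowtwo{w}$ (so that Lemma~\ref{lem:c_3 < y_1 NEW} really applies to $z_{j+2}$); the paper leaves this implicit after writing $y = z_r$, and your verification is a welcome clarification rather than a deviation.
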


\begin{proof}
The permutation $w$ is crowded, so there exist integers $x$ and $y$ such that $$\{y,y+1,y+3,\ldots, y+2x-1,y+2x\} = [y,y+2x] \cap \Rowtwo{w}.$$ 
Since $ \Rowtwo{w}=\{z_1<\cdots< z_t\}$, there is some $1\leq r\leq t-2$ such that $y=z_r$. 

Lemma~\ref{lem:long_subsequence NEW} tells us that 
\[z_r b_r z_{r+1}b_{r+1}z_{r+2}b_{r+2}\] is a consecutive subsequence of $w$. 
By Lemma~\ref{lem:c_3 < y_1 NEW}, we know that $b_{r+2}<z_r$.
By Lemma~\ref{lem:row 2 of P(w) appears in increasing order in w}, we have $z_r<z_{r+1}<z_{r+2}$ and $b_r<b_{r+1}<b_{r+2}$.
Combining these with the fact that $b_i<z_i$ for each $i$,  the consecutive subsequence that $z_r b_r z_{r+1}b_{r+1}z_{r+2}b_{r+2}$ is a $415263$-pattern. By Lemma~\ref{lem:row 2 of P(w) appears in increasing order in w}, 
the values $b_r$, $b_{r+1}$, and $b_{r+2}$ are not in $\Rowtwo{w}$.
\end{proof}

In fact, we can say more about this set $S_{x,y}$. 

\begin{lemma}\label{lem:minimal crowded must use largest element NEW}
Let $w$ be minimal in $\crowded_n$.
Any crowded subset of $\RowTwo(\P(w))$ must include the largest element of $\RowTwo(\P(w))$. 
\end{lemma}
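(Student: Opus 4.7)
\emph{Plan.} I would argue by contradiction. Suppose $S = S_{x,y}$ is a crowded subset of $\Rowtwo{w}$ with $z_t \notin S$; equivalently, $y + 2x < z_t$. The goal is to construct a fully commutative permutation $\tilde{w}$ strictly below $w$ in the right weak order that is still crowded, contradicting the minimality of $w$ in $\crowded_n$.

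\emph{Construction of $\tilde{w}$.} By Lemma~\ref{lem:long_subsequence NEW}, $z_1 b_1 z_2 b_2 \cdots z_t b_t$ is a consecutive subsequence of the one-line notation of $w$. Let $j$ be the position of $z_t$, so that $w(j+1) = b_t$ and $j$ is a descent of $w$. Set $\tilde{w} := w s_j$; then $\ell(\tilde{w}) = \ell(w) - 1$, so $\tilde{w} < w$ in the right weak order. To verify that $\tilde{w}$ is still fully commutative, consider any hypothetical $321$-pattern at positions $p < q < r$ of $\tilde{w}$: since $\tilde{w}$ and $w$ differ only at positions $\{j,j+1\}$ and $b_t < z_t$, a short case analysis on how $\{p,q,r\}$ meets $\{j,j+1\}$ shows that such a pattern transfers to a $321$-pattern in $w$ (by replacing $j$ with $j+1$ or vice versa within $\{p,q,r\}$), contradicting the full commutativity of $w$.

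\emph{Crowdedness of $\tilde{w}$.} The crucial step is to show that $\{z_1, \dots, z_{t-1}\} \subseteq \Rowtwo{\tilde{w}}$. By Lemma~\ref{lem:long_subsequence NEW}, for each $i < t$ both $z_i$ and $b_i$ occur at positions strictly less than $j$ in $w$, and $\tilde{w}$ agrees with $w$ on those positions, so $\P_{j-1}(\tilde{w}) = \P_{j-1}(w)$. Corollary~\ref{cor:bump must be adjacent NEW}(b) tells us that each $b_i$ bumps $z_i$ to the second row during the RSK insertion of $w$, and since all these bumps occur by position $j-1$, we get $\{z_1, \dots, z_{t-1}\} \subseteq \RowTwo(\P_{j-1}(w)) = \RowTwo(\P_{j-1}(\tilde{w}))$. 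Since $\P(\tilde{w})$ has at most two rows (Theorem~\ref{thm:Greene's theorem}) and second-row entries in a two-row tableau cannot return to the first row, this containment persists to the end: $\{z_1,\dots,z_{t-1}\} \subseteq \Rowtwo{\tilde{w}}$. Therefore $S_{x,y} \subseteq \Rowtwo{\tilde{w}}$, so $\tilde{w}$ is crowded, contradicting the minimality of $w$. I expect the main obstacle to be arranging the swap so that every element of $\Rowtwo{w}$ below $z_t$ is already locked into the second row before the swap occurs; this is precisely what the consecutive-subsequence conclusion of Lemma~\ref{lem:long_subsequence NEW} provides, which is what makes the construction go through cleanly.
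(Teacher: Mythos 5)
Your proof is correct and takes essentially the same approach as the paper: both set $\tilde{w} := ws_j$ where $j$ is the position of $z_t$, and both conclude that $\tilde{w}$ is a crowded element strictly below $w$, contradicting minimality. Your argument fills in the row-two containment more carefully (tracking partial tableaux up to position $j-1$ and invoking two-row persistence), whereas the paper simply asserts $\Rowtwo{\tilde{w}} = \RowTwo(\P(w)) \setminus \{z_t\}$, but the key construction and the use of Lemma~\ref{lem:long_subsequence NEW} are the same.
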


\begin{proof}
Recall that $z_t = \max\{\RowTwo(\P(w))\}$, and let $j$ be such that $w(j) = z_t$. The set $\Rowtwo{w}$ is crowded, so there are positive integers $x$ and $y$ such that $S_{x,y}\subseteq \Rowtwo{w}$. If $z_t\notin S_{x,y}$, then $S_{x,y} \subseteq \RowTwo(\P(w)) \setminus \{z_t\}$ and the permutation $\tilde{w} := w s_j < w$ would be crowded because $\Rowtwo{\tilde{w}} =\RowTwo(\P(w)) \setminus \{z_t\}$, contradicting the minimality of $w$. Thus 
every crowded subset $S_{x,y} \subseteq \Rowtwo{w}$ must have $y+2x = z_t$.
\end{proof}

This property about crowded subsets implies that when $w$ is minimal in $\crowded_n$, there is, in fact, a unique crowded subset of the second row of $\P(w)$ that is inclusion-wise minimal.

\begin{corollary}\label{cor:minimal means unique crowded NEW}
Let $w$ be minimal in $\crowded_n$. Then $\RowTwo(\P(w))$ contains exactly one inclusion-wise minimal crowded subset.
\end{corollary}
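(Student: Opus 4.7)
The plan is to exploit Lemma~\ref{lem:minimal crowded must use largest element NEW}, which forces every crowded subset of $\Rowtwo{w}$ to contain $z_t := \max \Rowtwo{w}$, together with the standard fact that the inclusion-wise minimal crowded subsets of any integer set are precisely the sets of the form $S_{x,y}$. Existence is immediate: since $w$ is crowded, $\Rowtwo{w}$ is a crowded set and hence contains at least one $S_{x,y}$ as a subset. So the real work lies in uniqueness.

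For uniqueness, I would argue by contradiction. Suppose that two distinct subsets $S_{x_1, y_1}$ and $S_{x_2, y_2}$ of $\Rowtwo{w}$ are both inclusion-wise minimal crowded. Because $\max S_{x_i, y_i} = y_i + 2x_i$ and each must contain $z_t$ by Lemma~\ref{lem:minimal crowded must use largest element NEW}, we have $y_1 + 2 x_1 = y_2 + 2 x_2 = z_t$. Without loss of generality $x_1 < x_2$, giving $y_2 < y_1 < z_t$. The key step is then to exhibit a crowded subset of $\Rowtwo{w}$ that avoids $z_t$, contradicting Lemma~\ref{lem:minimal crowded must use largest element NEW}. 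I would take $T := S_{x_2 - x_1,\, y_2}$, whose largest element is $y_2 + 2(x_2 - x_1) = y_1 < z_t$. To see that $T \subseteq \Rowtwo{w}$, one checks element by element: the values $y_2, y_2 + 1, y_2 + 3, \ldots, y_2 + 2(x_2 - x_1) - 1$ of $T$ lie among the first $x_2 - x_1 + 1$ elements of $S_{x_2, y_2}$, while the top element $y_1$ of $T$ coincides with the smallest element of $S_{x_1, y_1}$. Thus $T \subseteq S_{x_1, y_1} \cup S_{x_2, y_2} \subseteq \Rowtwo{w}$, giving a crowded subset that omits $z_t$, as desired.

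The main obstacle is the element-by-element verification that $T \subseteq \Rowtwo{w}$ in both regimes $x_2 - x_1 = 1$ (where $T$ degenerates to three consecutive integers $\{y_2, y_2 + 1, y_1\}$) and $x_2 - x_1 > 1$ (where $T$ has the generic interleaved structure of $S_{x,y}$). Both regimes follow directly from the definition of $S_{x,y}$, but one must be careful that the indices stay within range, in particular that $x_2 - x_1 \leq x_2$, so that the ``odd-spaced'' elements of $T$ are genuinely among the odd-spaced elements of $S_{x_2, y_2}$. Once this verification is in place, the contradiction with Lemma~\ref{lem:minimal crowded must use largest element NEW} is immediate, and uniqueness follows.
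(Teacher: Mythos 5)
Your proof is correct, and its backbone is the same as the paper's: both arguments lean entirely on Lemma~\ref{lem:minimal crowded must use largest element NEW} and obtain a contradiction by producing a crowded subset of $\RowTwo(\P(w))$ whose maximum is strictly less than $z_t$. Where you differ is in how that forbidden subset is built. The paper fixes the $S_{x,y}\subseteq\RowTwo(\P(w))$ with $y$ maximal, shows $y-1\notin\RowTwo(\P(w))$ (else $\{y-1,y,y+1\}$ would omit $z_t$), and then infers that no $S_{x',y'}$ with $y'<y$ can sit inside $\RowTwo(\P(w))$ -- a step that secretly uses the parity observation $y-y'=2(x'-x)$ to conclude $y-1\in S_{x',y'}$. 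Your version instead takes two putative minimal crowded subsets $S_{x_1,y_1}$ and $S_{x_2,y_2}$, notes both tops equal $z_t$, and splices together $S_{x_2-x_1,\,y_2}\subseteq S_{x_2,y_2}\cup\{y_1\}$, whose top $y_1$ misses $z_t$. The element-by-element check you flag (that $y_2,y_2+1,y_2+3,\dots,y_2+2(x_2-x_1)-1$ sit inside $S_{x_2,y_2}$ because $2(x_2-x_1)-1\le 2x_2-1$, and that $y_1\in S_{x_1,y_1}$) does go through in both the $x_2-x_1=1$ and $x_2-x_1>1$ regimes, so the argument is complete. Net effect: your route makes the bookkeeping explicit and avoids the ``maximal $y$'' reduction, at the cost of a slightly more intricate set construction; the paper's is terser but elides the parity step that actually closes the argument.
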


\begin{proof}
By Lemma~\ref{lem:minimal crowded must use largest element NEW}, every crowded subset of $\RowTwo(\P(w))$ includes the maximal element $z_t \in \RowTwo(\P(w))$. Let $S_{x,y}$ be the crowded subset for which $y$ is maximal.  Then $\{y,y+1\} \in \RowTwo(\P(w))$.  To avoid $\{y-1,y,y+1\}$  contradicting Lemma~\ref{lem:minimal crowded must use largest element NEW}, we must have $y-1 \not\in \RowTwo(\P(w))$. This means that there are no crowded sets $S_{x',y'}$ for $y' < y$.
\end{proof}

In Lemma~\ref{lem:long_subsequence NEW}, we proved the consecutivity of the subsequence $z_1b_1\cdots z_tb_t$ in $w$. We already know several inequalities among these letters, and there is now one more that we can establish.

\begin{lemma}\label{lem:minimal crowded has z_i < b_i+3 NEW}
Let $w$ be minimal in $\crowded_n$. 
For all $i \in [1,t-3]$, we have $z_i < b_{i+3}$.
\end{lemma}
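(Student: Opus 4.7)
The plan is to argue by contradiction: suppose $z_i > b_{i+3}$ for some $i\in[1,t-3]$ (equality is impossible since by Lemma~\ref{lem:row 2 of P(w) appears in increasing order in w}(b) the $b$'s and $z$'s are disjoint). By Lemma~\ref{lem:long_subsequence NEW}, $w$ contains the consecutive subsequence $z_i\,b_i\,z_{i+1}\,b_{i+1}\,z_{i+2}\,b_{i+2}\,z_{i+3}\,b_{i+3}$ at some positions $p,p+1,\ldots,p+7$, and the hypothesis forces these eight values to form the relative pattern $5\,1\,6\,2\,7\,3\,8\,4$; in particular $b_i<b_{i+1}<b_{i+2}<b_{i+3}<z_i<z_{i+1}<z_{i+2}<z_{i+3}$. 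The goal is to produce a fully commutative $\tilde w \lessdot w$ in the right weak order with $\P(\tilde w)=\P(w)$, so that $\tilde w$ is crowded and contradicts the minimality of $w$.

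The natural candidate is $\tilde w = ws_p$, obtained by swapping the descent $z_i\,b_i$. By Corollary~\ref{cor:bump must be adjacent NEW} this is a valid descent, so $\tilde w \lessdot w$. To conclude $\P(\tilde w)=\P(w)$, I would invoke Corollary~\ref{cor:insertion tableaux unequal along covering relation}: it suffices to exhibit a longest increasing subsequence of $\tilde w$ that does \emph{not} use both $b_i$ (at position $p$) and $z_i$ (at position $p+1$). Equivalently, writing $L=|\Rowone(\P(w))|$, it suffices to show that every LIS of $\tilde w$ has length at most $L$, i.e.\ swapping $z_i,b_i$ yields no genuine LIS increase.

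The key quantitative ingredient is the inequality $A_i+B_i<L-1$, where $A_i$ is the length of the longest increasing subsequence of $w[1,p-1]$ with values below $b_i$ and $B_i$ is the longest increasing subsequence of $w[p+2,n]$ with values above $z_i$; under Corollary~\ref{cor:insertion tableaux unequal along covering relation} this exactly encodes ``no gain.'' To prove it, I would compare $B_i$ with $C_i$, the analogous quantity using values above $b_i$ (rather than above $z_i$). Then $L \ge A_i + 1 + C_i$ from the LIS through $b_i$, and the hypothesis $z_i>b_{i+3}$ provides the increasing subsequence $b_i,b_{i+1},b_{i+2},b_{i+3}$ entirely in $(b_i,z_i]$, giving $C_i \ge B_i + 1$ strictly. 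Combining, $A_i+B_i < A_i+C_i \le L-1$, as required.

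The main obstacle is the strict inequality $C_i\ge B_i+1$. The extensions of the $b$-staircase and the $z$-staircase into $w[p+2t,n]$ could a priori be equal, and one needs the witness $b_{i+1},b_{i+2},b_{i+3}$ inside the window $w[p+2,p+7]$ to genuinely give a longer increasing subsequence than the one through $z_{i+1},z_{i+2},z_{i+3}$. This requires carefully invoking the fixed-point structure of $w$ from Corollary~\ref{cor:fixed points in minimal crowded NEW} and the exhaustive description of the block in Lemma~\ref{lem:long_subsequence NEW}: outside $[p,p+2t-1]$, the permutation $w$ is the identity on its support complement, so the two extensions to the right agree, and the $+1$ arises precisely from the extra interior length-$4$ $b$-chain which the $z$-chain (of length only $3$ through $z_{i+1},z_{i+2},z_{i+3}$) cannot match.
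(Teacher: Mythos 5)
The paper's proof is a direct counting argument: assuming $z_i > b_{i+3}$ and normalizing $b_1 = 1$, $z_t = n$ (via Corollary~\ref{cor:fixed points in minimal crowded NEW}), it partitions $[z_i, n-1]$ into $\{z_i,\ldots,z_{t-1}\}$ and $\{b_j : b_j > z_i\}$, gets $|[z_i,n-1]| \le (t-i) + (t-i-3) = 2(t-i)-3$, and concludes $\{z_i,\ldots,z_{t-1}\}$ is a crowded subset of $\RowTwo(\P(w))$ omitting $z_t$, contradicting Lemma~\ref{lem:minimal crowded must use largest element NEW}. Your approach is genuinely different: you swap the adjacent pair $z_i\,b_i$ to get $\tilde w = w s_p \lessdot w$ and try to show $\P(\tilde w) = \P(w)$, so that $\tilde w$ is a smaller crowded element, contradicting minimality directly. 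The logical scaffolding (using Corollary~\ref{cor:insertion tableaux unequal along covering relation} to reduce to an LIS estimate, reducing that to $A_i + B_i + 2 \le L$, and the chain $L \ge A_i + 1 + C_i$) is sound.

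The gap is exactly where you flag it: the strict inequality $C_i \ge B_i + 1$. Your proposed witness $b_{i+1}, b_{i+2}, b_{i+3}$ sits at positions $p+3, p+5, p+7$, \emph{interleaved} with $z_{i+1}, z_{i+2}, z_{i+3}$ at positions $p+2, p+4, p+6$. So the $b$-chain in $w[p+2,n]$ with values $>b_i$ and the $z$-chain with values $>z_i$ both have length $3$ inside the window; they cannot be concatenated, and swapping the $z$-chain for the $b$-chain in an optimal subsequence typically gives the same length, not one more. (Note also $b_i$ itself is at position $p+1 \notin [p+2,n]$, so the ``length-$4$ $b$-chain'' you invoke is really length $3$ there.) A concrete illustration that the structural facts you cite do not force the strict inequality: take $w = 51627384$. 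Here $z_1 b_1 \cdots z_4 b_4$ is already the consecutive form of Lemma~\ref{lem:long_subsequence NEW}, there are no fixed points, $t=4$, $i=1$, and $z_1 = 5 > 4 = b_4$; one computes $A_1 = 0$, $B_1 = 3$, $C_1 = 3$, so $C_i = B_i$ and the argument gives $A_1 + B_1 + 2 = 5 = L$, not $\le L - 1$. Indeed $\P(ws_1) \ne \P(w)$ here ($\RowTwo$ drops from $\{5,6,7,8\}$ to $\{5,6,7\}$), which also shows that the intermediate claim $\P(\tilde w)=\P(w)$ is not the right thing to aim for: even when the swap \emph{does} change the tableau, the smaller second row may remain crowded. (Of course $51627384$ is not actually minimal in $\crowded_8$, precisely because the lemma holds --- but it satisfies every structural property you invoke, so your derivation of $C_i \ge B_i + 1$ cannot go through from those properties alone.) To close the gap you would need a genuinely minimality-dependent input; the paper's route through Lemma~\ref{lem:minimal crowded must use largest element NEW} and a crude element count sidesteps all of the LIS bookkeeping and is considerably shorter.
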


\begin{proof}
By Corollary~\ref{cor:fixed points in minimal crowded NEW}, we can assume, without loss of generality, that $b_1 = 1$ and $z_t = n$. Furthermore, Corollary~\ref{cor:minimal means unique crowded NEW} forces $z_{t-1} = n-1$. Now suppose, for the purpose of obtaining a contradiction, that there exists $i \in [1,t-3]$ such that $z_i > b_{i+3}$.

The set $\{z_i, \ldots, z_{t-1}\}$ contains $t-i$ elements. Because $z_i < z_{i+1} < \cdots$, we have
$$\{z_i, \ldots, z_{t-1}\} \subseteq [z_i, n-1].$$
The interval $[z_i, n-1]$ can be partitioned into 
$$\{z_i, \ldots, z_{t-1}\} \sqcup \{b_j : b_j > z_i\},$$
meaning that the cardinality of $[z_i,n-1]$ is at most $(t-i) + (t-(i+3)) = 2(t-i) - 3$. Therefore $\{z_i, \ldots, z_{t-1}\}$ is crowded, contradicting Corollary~\ref{lem:minimal crowded must use largest element NEW}.
\end{proof}

From this property, we learn how $415263$-patterns can appear in a minimal element of $\crowded_n$.

\begin{corollary}\label{cor:minimal crowded has only consecutive 415263} 
Let $w$ be minimal in $\crowded_n$. Every $415263$-pattern in $w$ is consecutive.
\end{corollary}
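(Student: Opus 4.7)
The plan is to pin down any $415263$-pattern in $w$ by combining the rigid structure of $w$ on the interval $[d,d'+1]$ (from Corollary~\ref{cor:fixed points in minimal crowded NEW} and Lemma~\ref{lem:long_subsequence NEW}, where $d,d'$ are the first and last descents of $w$) with the index bound proved in Lemma~\ref{lem:minimal crowded has z_i < b_i+3 NEW}. Fix a $415263$-pattern at positions $p_1<\cdots<p_6$. A short check using $w(p_1)>w(p_2)$ and $w(p_5)>w(p_6)$, together with the fact that $w$ fixes every index outside $[d,d'+1]$, shows $p_1\ge d$ and $p_6\le d'+1$, so all six positions lie in that interval. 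By Lemma~\ref{lem:long_subsequence NEW}, the restriction of $w$ to $[d,d'+1]$ reads $z_1b_1z_2b_2\cdots z_tb_t$, placing $z_j$ at position $d+2j-2$ and $b_k$ at position $d+2k-1$.

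Next I would classify each $p_i$ as a $z$-position or a $b$-position. Within the subsequence $z_1b_1\cdots z_tb_t$ the $z$-values and the $b$-values are each strictly increasing in position, so a pair $w(p)>w(p')$ with $p<p'$ is impossible when $p$ and $p'$ are both $z$-positions or both $b$-positions, and is also impossible in the ``$b$-then-$z$'' case: the position inequality forces $k_p<j_{p'}$, whence $b_{k_p}<b_{j_{p'}}<z_{j_{p'}}$. Hence the three pattern descents $w(p_1)>w(p_2)$, $w(p_3)>w(p_4)$, $w(p_5)>w(p_6)$ pin $p_1,p_3,p_5$ to $z$-positions and $p_2,p_4,p_6$ to $b$-positions. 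Writing $w(p_{2i-1})=z_{j_{2i-1}}$ and $w(p_{2i})=b_{k_{2i}}$, the chain $p_1<p_2<\cdots<p_6$ translates via the position formulae to the index chain $j_1\le k_2<j_3\le k_4<j_5\le k_6$.

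The final step uses the cross-relation $w(p_6)<w(p_1)$ built into the pattern, i.e.\ $b_{k_6}<z_{j_1}$. Lemma~\ref{lem:minimal crowded has z_i < b_i+3 NEW} gives $z_{j_1}<b_{j_1+3}$ whenever $j_1+3\le t$, so $k_6\le j_1+2$. Inserting this ceiling, the chain $j_1\le k_2<j_3\le k_4<j_5\le k_6\le j_1+2$ collapses to equalities, forcing $k_2=j_1$, $j_3=k_4=j_1+1$, and $j_5=k_6=j_1+2$; plugging these indices into the position formulae yields $p_{i+1}=p_i+1$, so the pattern is consecutive.

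The main obstacle I anticipate is the boundary regime $j_1+3>t$, where Lemma~\ref{lem:minimal crowded has z_i < b_i+3 NEW} does not apply directly. There I would observe that $j_5\le t$ together with $j_5\ge j_1+2$ forces $j_1=t-2$, after which the trivial index bound $k_6\le t=j_1+2$ supplies the ceiling needed to run the same collapse argument, and the rest of the proof proceeds identically.
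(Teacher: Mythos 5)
Your proof is correct and rests on the same three ingredients the paper uses (the fixed-point structure from Corollary~\ref{cor:fixed points in minimal crowded NEW}, the consecutive interweaving $z_1b_1\cdots z_tb_t$ from Lemma~\ref{lem:long_subsequence NEW}, and the bound $z_i<b_{i+3}$ from Lemma~\ref{lem:minimal crowded has z_i < b_i+3 NEW}). The paper packages these into the single observation that $w(i)<w(j)$ whenever $j\ge i+6$, so the cross-inequality $w(p_1)>w(p_6)$ immediately forces $p_6\le p_1+5$; your explicit $z$-position/$b$-position bookkeeping and index-chain collapse is a more verbose route to the same conclusion.
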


\begin{proof}
Lemma~\ref{lem:row 2 of P(w) appears in increasing order in w}, Corollary~\ref{cor:fixed points in minimal crowded NEW}, and Lemma~\ref{lem:minimal crowded has z_i < b_i+3 NEW} mean that $w(i) < w(j)$ for all $j \ge i+6$, and so it is impossible to find a non-consecutive $415263$-pattern in $w$.
\end{proof}

In fact, any consecutive subsequence of a minimal element of $\crowded_n$ that both begins and ends with a descent must have one of two forms.

\begin{lemma}\label{lem:minimal means 315264 and 415263 patterns}
Let $w$ be a minimal element in $\crowded_n$, with descent set $\{d, d+2, \ldots, d+2k\}$. For every $i \in [0,k-2]$, the consecutive subsequence
$$w(d+2i) \cdots w(d+2i+5)$$
is either a $415263$- or a $315264$-pattern.
\end{lemma}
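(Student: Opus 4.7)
By Lemma~\ref{lem:long_subsequence NEW}, the six-element block $w(d+2i)\cdots w(d+2i+5)$ is exactly $z_{i+1}\,b_{i+1}\,z_{i+2}\,b_{i+2}\,z_{i+3}\,b_{i+3}$, and Lemma~\ref{lem:row 2 of P(w) appears in increasing order in w} gives the orderings $z_{i+1}<z_{i+2}<z_{i+3}$ and $b_{i+1}<b_{i+2}<b_{i+3}$, the disjointness of the $z$'s and $b$'s, and $b_{i+l}<z_{i+l}$ for $l=1,2,3$. A short enumeration of six-letter $321$-avoiding permutations compatible with these constraints yields exactly five candidate patterns: $415263$, $315264$, $215364$, $314265$, and $214365$. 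The first two are characterized by $b_{i+2}<z_{i+1}$ and $b_{i+3}<z_{i+2}$, and each of the other three violates at least one of these two inequalities, so the lemma reduces to proving them.

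I would prove the uniform claim $b_{j+1}<z_j$ for every $j\in[1,t-1]$ by contradiction. Suppose $b_{j+1}>z_j$, and consider the cover $w^-:=ws_{d+2(j-1)}$ obtained by swapping $z_j$ and $b_j$. A short induction on $l$ shows that the first row of $\P_{d+2(l-1)-1}(w)$ is $[1,\dots,d-1,b_1,\dots,b_{l-1}]$ with second row $[z_1,\dots,z_{l-1}]$, so I can trace the RSK insertions of $b_j,z_j,z_{j+1},b_{j+1}$ in $w^-$ from this common state. Under $b_{j+1}>z_j$, the value $b_{j+1}$ bumps $z_{j+1}$ rather than $z_j$, and since every subsequent interesting value exceeds $z_j$, the value $z_j$ stays permanently in the first row of $\P(w^-)$; thus $\Rowtwo{w^-}=\Rowtwo{w}\setminus\{z_j\}$. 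Minimality of $w$ in $\crowded_n$ forces $w^-$ to be uncrowded, and Corollary~\ref{cor:minimal means unique crowded NEW} combined with Lemma~\ref{lem:minimal crowded must use largest element NEW} then forces $z_j\in S$, where $S=S_{x,y}$ is the unique inclusion-wise minimal crowded subset of $\Rowtwo{w}$ and $y+2x=z_t$.

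The contradiction comes from a short case-split on the position of $z_j$ in $S$, using that $b_{j+1}\in(z_j,z_{j+1})$ forces $z_{j+1}\geq z_j+2$. If $z_j=y$, then $y+1\in S\subseteq\Rowtwo{w}$ forces $z_{j+1}\leq y+1=z_j+1$, a contradiction. If $z_j=y+1$ (requiring $x\geq 2$), the same gap argument yields $z_{j+1}=y+3$ and $b_{j+1}=y+2$, and then Lemma~\ref{lem:c_3 < y_1 NEW} applied to the third smallest element of $S$ together with Corollary~\ref{cor:bump must be adjacent NEW} gives $b_{j+1}<y$, contradicting $b_{j+1}=y+2$. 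Finally, if $z_j=y+2l-1$ with $l\in[2,x]$, counting the $b$'s that are at most $y+2l$ and combining with the structural identity $z_t=d+2t-1$ from Corollary~\ref{cor:fixed points in minimal crowded NEW} produces the parity equation $2(l_0+x)=2t-1$, where $l_0$ is the index satisfying $z_{l_0}=y$---an even-equals-odd contradiction. The main obstacle is the RSK tracking for $w^-$, verifying that once $z_j$ is stranded in the first row it persists there regardless of later $b$-versus-$z$ comparisons; each case of the final step is then a short elementary check.
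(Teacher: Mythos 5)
Your proposal is correct and shares the paper's overall architecture: reduce the lemma to the single uniform inequality $b_{j+1}<z_j$, assume the contrary, pass to the cover $w^-=ws_{d+2(j-1)}$, show that $\Rowtwo{w^-}=\Rowtwo{w}\setminus\{z_j\}$ by tracking the insertion (the key point being that once $b_{j+1}>z_j$, the value $z_j$ is appended and every subsequent inserted value exceeds it), and then invoke minimality of $w$ to force $z_j$ into the distinguished crowded set $S_{x,y}$. Where you diverge is the endgame. The paper splits on whether $z_{j+1}\in S_{x,y}$ and in its second case argues via a chain of forced bumps ($z_{j+1}+1$ bumping $z_{j+1}+2$, and so on) until $S_{x,y}$ fails to be crowded; your version instead splits on the position of $z_j$ within $S_{x,y}$ and closes the generic case with a parity contradiction. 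That parity argument, which your sketch leaves terse, does work: from $b_{j+1}>z_j$ one gets $\{w(d),\dots,w(d+2j-1)\}=[d,d+2j-1]$, so $z_j=d+2j-1$, while $z_t=d+2t-1$ by Corollary~\ref{cor:fixed points in minimal crowded NEW}; hence $z_t-z_j=2(t-j)$ is even, but if $z_j=y+2l-1$ with $l\ge 1$ and $z_t=y+2x$ (Lemma~\ref{lem:minimal crowded must use largest element NEW}), then $z_t-z_j=2(x-l)+1$ is odd. This in fact also disposes of your $z_j=y+1$ case directly, so the separate appeal to Lemma~\ref{lem:c_3 < y_1 NEW} there is optional; only $z_j=y$ needs the gap argument $z_{j+1}\le y+1<z_j+2$. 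Your explicit enumeration of the five candidate patterns and the observation that $415263$ and $315264$ are exactly the ones with $b_{i+2}<z_{i+1}$ and $b_{i+3}<z_{i+2}$ is a nice, more transparent justification of the paper's terse ``it remains to show $z_i>b_{i+1}$.'' In short: same skeleton, genuinely different and arguably cleaner final contradiction; the main thing you should flesh out if writing this up is the derivation of $z_j=d+2j-1$ feeding the parity clash.
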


\begin{proof}
From Lemmas~\ref{lem:b1<b2<... and b1,b2,... apppear from left to right} and~\ref{lem:long_subsequence NEW}, it remains to show that in any such sequence
$$z_i b_i z_{i+1} b_{i+1} z_{i+2} b_{i+2},$$
we have that $z_i$ is greater than $b_{i+1}$. Suppose, for the sake of contradiction, that $z_i < b_{i+1}$, with $j$ defined so that $w(j) = z_i$. In other words, $\{w(1), \ldots, w(j+1)\} = [1,j+1]$. The permutation $w$ is crowded, so let $S_{x,y} \subseteq \RowTwo(\P(w))$ be the unique containment-wise minimal crowded set guaranteed by Corollary~\ref{cor:minimal means unique crowded NEW}. If $z_{i+1} \not\in S_{x,y}$, then $z_i \not\in S_{x,y}$, by Lemma~\ref{lem:minimal crowded must use largest element NEW}. Moreover, $\RowTwo(w s_j) = \RowTwo(w) \setminus \{z_i\}$, so $S_{x,y} \subseteq \RowTwo(w s_j)$, and $w s_j < w$ is an element of $\crowded_n$, contradicting the assumption of minimality.

Now suppose, on the other hand, that $z_{i+1} \in S_{x,y}$. Then $z_{i+1} + 1 \not\in S_{x,y}$, so it must be that $z_{i+1} + 2 \in S_{x,y}$, and this can only happen if $z_{i+1} + 1$ bumps $z_{i+1} + 2$ to $\RowTwo(w)$. In fact, a similar argument shows that $z_{i+1} + 2i$ might be bumped by $z_{i+1} + 2i-1$, but $z_{i+1} + 2i+1$ cannot then also be bumped, contradicting the fact that $S_{x,y}$ is crowded.
\end{proof}

\subsection{Characterization of minimality in $\crowded_n$.}\label{sec:characterization of minimality}

Having established a variety of properties of minimal elements of $\crowded_n$ in Section~\ref{sec:consequences of minimality}, we are now able to completely characterize those elements.

\begin{theorem}\label{thm:minimal crowded iff NEW}
A permutation $w$ is a minimal element of $\crowded_n$ if and only if it satisfies the conditions below. 
\begin{enumerate}[\hspace{.5cm}(a)]
    \item \label{item:descent set} The set of descents of $w$ has the form $\{d,d+2, d+4, \ldots, d+2k\}$ for some $k \ge 2$.
    \item \label{item:crowded set} The set $\{w(d), w(d+2), \ldots, w(d+2k)\}$ is crowded.
    \item \label{item:fixed points} The permutation fixes all $i \in [1,n] \setminus [d,d+2k+1]$.
    \item \label{item:415263} The pattern $415263$ occurs in $w$, and every occurrence of $415263$ is consecutive.
    \item \label{item:415263 and 315264} For each $i \in [0,k-2]$, the consecutive subsequence
    $$w(d+2i) \cdots w(d+2i+5)$$
    is either a $415263$- or a $315264$-pattern.
\end{enumerate}
\end{theorem}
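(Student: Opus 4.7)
The plan is to prove the two implications separately. For the forward direction, supposing $w$ is minimal in $\crowded_n$, I would harvest conditions (c)--(e) directly from the results of Section~\ref{sec:consequences of minimality}: condition (c) is Corollary~\ref{cor:fixed points in minimal crowded NEW}, condition (d) combines Corollary~\ref{cor:unbalanced minima have 415263 NEW} (existence of the pattern) with Corollary~\ref{cor:minimal crowded has only consecutive 415263} (every occurrence is consecutive), and condition (e) is Lemma~\ref{lem:minimal means 315264 and 415263 patterns}. Condition (a) requires a brief assembly: Corollary~\ref{cor:bump must be adjacent NEW}(\ref{item:adjacent bump}) identifies the descents of $w$ with the positions at which the next value bumps the current one during RSK insertion, and then Lemma~\ref{lem:long_subsequence NEW} forces these positions to be $d, d+2, \ldots, d+2(t-1)$, where $t = |\Rowtwo{w}|$; since $\Rowtwo{w}$ is crowded it must contain at least three elements, so $k := t-1 \ge 2$. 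For (b), the descent values are precisely $z_1, \ldots, z_t$, whose set equals $\Rowtwo{w}$ and is crowded by hypothesis.

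For the reverse direction, I would assume (a)--(e) and argue in three stages. First, $w$ is fully commutative: (c) confines the nontrivial behavior to the window $[d, d+2k+1]$, and (e) forces each $6$-letter subwindow beginning at a descent position to realize the pattern $415263$ or $315264$, both of which are $321$-avoiding. Because consecutive such subwindows overlap in four positions and agree there, any putative $321$-pattern would have to lie entirely within a single $6$-window and hence cannot exist. Second, $w$ is crowded: a direct RSK computation guided by (a) and (e) shows that $w(d+2j+1)$ bumps $w(d+2j)$ during insertion for every $0 \le j \le k$, so $\Rowtwo{w} = \{w(d), w(d+2), \ldots, w(d+2k)\}$, which is crowded by (b). Third, $w$ is minimal: for each descent $d+2j$, Theorem~\ref{thm:growing fully commutative means row 2 subsets} together with Corollary~\ref{cor:insertion tableaux unequal along covering relation} and the bumping structure from stage two yield $\Rowtwo{ws_{d+2j}} = \Rowtwo{w} \setminus \{w(d+2j)\}$, and it remains to show that this set is uncrowded for every $j$.

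The main obstacle is this last uncrowdedness claim, because (b) only asserts $\Rowtwo{w}$ is crowded, not that every single-element deletion remains uncrowded. This is where condition (d) is indispensable: if $\Rowtwo{w}\setminus\{z_{j+1}\}$ contained a crowded subset $S_{x,y}$, then (in the spirit of Lemma~\ref{lem:c_3 < y_1 NEW} and Corollary~\ref{cor:unbalanced minima have 415263 NEW}) the three smallest $z$-values of $S_{x,y}$ together with their corresponding $b$-values would form a $415263$-pattern in $w$. Because this subsequence necessarily skips the position carrying $z_{j+1}$, the pattern would be non-consecutive, contradicting (d). Thus (d) upgrades the crowdedness of $\Rowtwo{w}$ to the minimally-crowded property needed to conclude that $w$ is minimal in $\crowded_n$ under the right weak order.
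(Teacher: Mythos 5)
Your forward direction is correct and is exactly the paper's: each of conditions (a)--(e) is harvested from the corresponding result in Section~\ref{sec:consequences of minimality}, with Corollary~\ref{cor:bump must be adjacent NEW} and Lemma~\ref{lem:long_subsequence NEW} supplying (a)--(b). Your stage~2 of the converse (that $\Rowtwo{w} = \{w(d), w(d+2), \ldots, w(d+2k)\}$, hence $w \in \crowded_n$) also matches the paper. Stage~1 is not needed at all: the boxed assumption at the top of Section~\ref{sec:minimal crowded permutations} already restricts to fully commutative $w$, so the theorem is implicitly about $321$-avoiding permutations and you need not re-derive that; and as written your overlapping-window argument has a gap anyway, since a $321$-pattern inside $[d,d+2k+1]$ need not lie within a single length-$6$ window.

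The serious problem is stage~3. The claim that
\[
\Rowtwo{w s_{d+2j}} \;=\; \Rowtwo{w} \setminus \{w(d+2j)\}
\]
is false. Try it on the paper's own Example~\ref{ex:w=41627385 is minimal crowded}: for $w = 41627385$ one has $\Rowtwo{w} = \{4,6,7,8\}$, and a direct computation gives $\Rowtwo{w s_1} = \Rowtwo{w s_3} = \Rowtwo{w s_5} = \Rowtwo{w s_7} = \{4,6,7\}$. In every case the element that disappears from the second row is the \emph{maximum}, $z_t = 8 = w(n-1)$, not $w(d+2j)$. This is also what the paper's own proof records (``$\RowTwo(\P(v)) = \RowTwo(\P(w)) \setminus \{w(n-1)\}$''): once you swap positions $i$ and $i+1$, the bumping chain shifts so that each $w(2j+2)$ bumps $w(2j-1)$ for $2j-1 \ge i$, leaving the top value unbumped. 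Because your formula is wrong, your final step (producing a non-consecutive $415263$ to contradict (d)) collapses as well: with the correct deletion, a hypothetical crowded subset $S_{x,y} \subseteq \Rowtwo{w} \setminus \{z_t\}$ occupies \emph{consecutive} indices $z_a, z_{a+1}, z_{a+2}$ with $a+2 < t$, so the associated $z_a b_a z_{a+1} b_{a+1} z_{a+2} b_{a+2}$ is consecutive in $w$ and yields no contradiction with (d). Your direct ``check each cover'' strategy is in principle cleaner than the paper's argument (which sets up a doubly-minimal counterexample $w$ that covers a genuinely minimal crowded $v$ and derives a contradiction from the structure theory applied to $v$), but to make it work you would need to replace your deletion formula with the correct one and then give a genuine argument that $\Rowtwo{w} \setminus \{z_t\}$ is uncrowded; condition (d) alone does not produce a non-consecutive $415263$ the way you describe.
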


\begin{proof}
First suppose that $w$ is a minimal element of $\crowded_n$. Then Corollary~\ref{cor:bump must be adjacent NEW} and Lemma~\ref{lem:long_subsequence NEW} establish Properties~(\ref{item:descent set}) and~(\ref{item:crowded set}). Property~(\ref{item:fixed points}) is proved in Corollary~\ref{cor:fixed points in minimal crowded NEW}, and Property~(\ref{item:415263}) is a result of 
Corollaries~\ref{cor:unbalanced minima have 415263 NEW} and~\ref{cor:minimal crowded has only consecutive 415263}. Finally, Property~(\ref{item:415263 and 315264}) follows from Lemma~\ref{lem:minimal means 315264 and 415263 patterns}. Finally, we know from Property~(\ref{item:descent set}) and Corollary~\ref{cor:bump must be adjacent NEW} that $\RowTwo(\P(w)) = \{w(d),w(d+2),\ldots,w(d+2k)\}$.

Now suppose that a permutation $w$ has Properties~(\ref{item:descent set})--(\ref{item:415263 and 315264}) in the statement of the theorem. It follows from (\ref{item:descent set}) and~(\ref{item:fixed points}) and the fact that $w$ is $321$-avoiding that $\RowTwo(\P(w)) = \{w(d),w(d+2),\ldots,w(d+2k)\}$. 

This  
and Property~(\ref{item:crowded set}) mean that $w \in\crowded_n$. It remains, now, to prove that $w$ is minimal in that set.

Suppose, for the purpose of obtaining a contradiction, that $w$ is not minimal in $\crowded_n$. In fact, suppose that $w$ is minimal with this property, meaning that anything covered by $w$ is either not crowded, or minimal in $\crowded_n$. In particular, there must be at least one $v = ws_i$ in the latter category, by our assumption about $w$. Given Property~(\ref{item:fixed points}), let us assume, without loss of generality, that $d=1$ and $d+2k = n-1$. Because $v < w$, we have that $w(i) > w(i+1)$. Moreover, our assumptions about $w$ mean that $w(2j-1) > w(2j)$ for all $j$, and each $w(2j)$ bumps $w(2j-1)$ to $\RowTwo(\P(w))$. In particular, $w(n-1) = n$ and $w(n-3) = n-1$. On the other hand, Properties~(\ref{item:descent set}), (\ref{item:415263}), and~(\ref{item:415263 and 315264}) mean that in $v$, those bumping rules are no longer the case when $2j-1 > i$. Indeed, in $v$, it is $w(2j+2)$ that bumps $w(2j-1)$ to $\RowTwo(\P(v))$ when $2j-1 \ge i$. Thus
$$\RowTwo(\P(v)) = \RowTwo(\P(w)) \setminus \{w(n-1)\}.$$
Since we have assumed that $v \in \crowded_n$ is minimal, we know from previous results that $\RowTwo(\P(v))$ contains $n-1$, $n-2$ (which would have been $w(n-5)$, and either $n-3$ or $n-4$ (which would have been $w(n-7)$.
\begin{itemize}
    \item If $n-3 \in \RowTwo(\P(v))$, then $w(n) < n-3$, and so $w(n-7)w(n-6)w(n-5)w(n-4)w(n-1)w(n)$ would be a non-consecutive $415263$-pattern, violating Property (\ref{item:415263}).
    \item If, instead, $n-4 \in \RowTwo(\P(v))$, then $n-4 = w(n-7)$ and hence $n-3 = w(n)$. If we try to understand the rest of $w$ while satisfying Property (\ref{item:415263}), we find that $n-5 = w(n-2)$, $n-6 = w(n-9)$, $n-7 = w(n-4)$, $n-8 = w(n-11)$, and so on, meaning that the set $\RowTwo(\P(v))$ will never actually be crowded.
\end{itemize}
Thus there can be no such $v<w$, and so $w \in \crowded_n$ is minimal.
\end{proof}

\begin{remark}
Continuing the notation of Theorem~\ref{thm:minimal crowded iff NEW}, it followed immediately that if $w$ is a minimal element of $\crowded_n$ then $\RowTwo(\P(w)) = \{w(d),w(d+2),\ldots,w(d+2k)\}$.
\end{remark}

\begin{example}
\label{ex:w=41627385 is minimal crowded}
The permutation $w=41627385$ is a minimal element of $\crowded_8$. We check each of the conditions to confirm this.
\begin{enumerate}[\hspace{.5cm}(a)]
\item The descents of $w$ are $\{1,3,5,7\}$, so $d=1$ and $k=3$. 
\item The set $\{w(1), w(3), w(5), w(7)\}$ is $\{4,6,7,8\}$, which is crowded due to $\{6,7,8\}$.
\item The third condition holds vacuously.
\item 
The permutation $w$ contains two occurrences of the  $415263$ pattern: $416273$ and $627385$. Both are consecutive subsequences of $w$.
\item We check $i = 0,1$: the subsequence $416273$ is a $415263$-pattern and the subsequence $627385$ is a $415263$-pattern. 
\end{enumerate}
The insertion tableau in this case is 
$$\P(w) = \young(1235,4678)\,,$$
and indeed $\Rowtwo{w} = \{w(1),w(3),w(5),w(7)\}$.
\end{example}

\section*{Acknowledgements}
The authors would like to thank the 2021--2022 Research Community in Algebraic Combinatorics program at ICERM, through which this research took place. We thank the organizers and staff 
for putting together this invigorating and inspiring workshop series. 
The authors are also grateful to Carolina Benedetti, for helpful discussions. 
Finally, this work benefited from computation using {\sc SageMath}~\cite{sage}.


\end{document}